\DeclareMathOperator*{\tend}{\longrightarrow}
\renewcommand{\leq}{\leqslant}
\renewcommand{\geq}{\geqslant}
\renewcommand{\div}{\operatorname{div}}
\newcommand{\orange}{\color{orange}}
\theoremstyle{definition}
\newtheorem{defin}{Definition}[section]
\newtheorem{rmk}[defin]{Remark}
\theoremstyle{plane}
\newtheorem{thm}[defin]{Theorem}
\newtheorem{prop}[defin]{Proposition}
\newtheorem{cor}[defin]{Corollary}
\newtheorem{lemma}[defin]{Lemma}
\newcommand{\tsl}{\textsl}
\newcommand{\mbb}{\mathbb}
\newcommand{\mc}{\mathcal}
\newcommand{\mf}{\mathfrak}
\newcommand{\veps}{\varepsilon}
\newcommand{\what}{\widehat}
\newcommand{\wtilde}{\widetilde}
\newcommand{\vphi}{\varphi}
\newcommand{\g}{\gamma}
\newcommand{\s}{\sigma}
\renewcommand{\t}{\tau}
\newcommand{\de}{\delta}
\renewcommand{\o}{\omega}
\newcommand{\wu}{\wtilde{u}}
\newcommand{\wOm}{\wtilde{\Omega}}
\newcommand{\wom}{\wtilde{\omega}}
\newcommand{\bstar}{\beta_*}
\newcommand{\lan}{\langle}
\newcommand{\ran}{\rangle}
\newcommand{\R}{\mathbb{R}}
\newcommand{\N}{\mathbb{N}}
\newcommand{\Z}{\mathbb{Z}}
\renewcommand{\div}{{\rm div}\,}
\newcommand{\curl}{{\rm curl}\,}
\newcommand{\supp}{{\rm Supp}\,}
\newcommand{\resc}{{\rm resc}}
\newcommand{\dx}{ \, {\rm d} x}
\newcommand{\dt}{ \, {\rm d} t}
\newcommand{\bt}{\beta}
\newcommand{\weak}{{\rm w}}
\def\d{\partial}
\def\div{{\rm div}\,}
\newcommand{\dd}{{\rm d}}
\begin{document}

\newcommand{\fra}[1]{\textcolor{blue}{#1}}

\title{\textsc{\Large{\textbf{Global existence for non-homogeneous incompressible inviscid fluids in presence of Ekman pumping}}}}

\author{\normalsize\textsl{Marco Bravin}$\,^1\qquad$ and $\qquad$
\textsl{Francesco Fanelli}$\,^{2a,b,c}$ \vspace{.5cm} \\
\footnotesize{$\,^{1}\;$ \textsc{Universidad de Cantabria, E.T.S. de Ingenieros Industriales y de Telecomunicac\'ion}}  \vspace{.1cm} \\
{\footnotesize \it Departmento de Matem\'atica Aplicada y Ciencias de la Computac\'ion}  \vspace{.1cm}\\
{\footnotesize Avd. Los Castros 44, 39005 Santander, SPAIN} \vspace{.3cm} \\
\footnotesize{$\,^{2a}\;$ \textsc{BCAM -- Basque Center for Applied Mathematics}} \\ 
{\footnotesize Alameda de Mazarredo 14, E-48009 Bilbao, Basque Country, SPAIN} \vspace{.2cm} \\
\footnotesize{$\,^{2b}\;$ \textsc{Ikerbasque -- Basque Foundation for Science}} \\  
{\footnotesize Plaza Euskadi 5, E-48009 Bilbao, Basque Country, SPAIN} \vspace{.2cm} \\
\footnotesize{$\,^{2c}\;$ \textsc{Universit\'e Claude Bernard Lyon 1}, {\it Institut Camille Jordan -- UMR 5208}} \\ 
{\footnotesize 43 blvd. du 11 novembre 1918, F-69622 Villeurbanne cedex, FRANCE} \vspace{.3cm} \\
\footnotesize{Email addresses: $\,^{1}\;$\ttfamily{marco.bravin@unican.es}}, $\;$
\footnotesize{$\,^{2}\;$\ttfamily{ffanelli@bcamath.org}}
\vspace{.2cm}
}

\date\today

\maketitle

\subsubsection*{Abstract}
{\footnotesize 

In this paper, we study the global solvability of the density-dependent incompressible Euler equations,
supplemented with a damping term of the form $ \mathfrak{D}_{\alpha}^{\gamma}(\rho, u) = \alpha \rho^{\gamma} u $, where $\alpha>0$ and $ \gamma \in \{0,1\} $.
To some extent, this system can be seen as a simplified model describing the mean dynamics in the ocean; from this perspective,
the damping term can be interpreted as a term encoding the effects of the celebrated Ekman pumping in the system.

On the one hand, in the general case of space dimension $d\geq 2$, we establish global well-posedness in the Besov spaces framework,
under a non-linear smallness condition involving the size of the initial velocity field $u_0$, of the initial non-homogeneity $\rho_0-1$ and of the damping coefficient $\alpha$.
On the other hand, in the specific situation of planar motions and damping term with $\g=1$,
we exhibit a second smallness condition implying global existence, which in particular yields global well-posedness for arbitrarily
large initial velocity fields, provided the initial density variations $\rho_0-1$ are small enough.
The formulated smallness conditions rely only on the endpoint Besov norm $B^1_{\infty,1}$ of the initial datum, whereas, as a byproduct of our analysis,
we derive exponential decay of the velocity field and of the pressure gradient in the high regularity norms $B^s_{p,r}$.

}

\paragraph*{\small 2020 Mathematics Subject Classification:}{\footnotesize 35Q35 
(primary);
35A01, 
35B65, 
76B03 
(secondary).}

\paragraph*{\small Keywords: }{\footnotesize incompressible Euler equations; density variations; damping term; global in time well-posedness.}


\section{Introduction} \label{s:intro}

Ocean dynamics is an intricate phenomenon, determined by several complementary, and sometimes conflicting, factors \cite{CR-B, Ped}.
Because of the large scales involved in the process, the two predominant ones are
the action of the Coriolis force, which tends to drive
the flow to a purely planar ($2$-D, horizontal) configuration, and the action of gravity, which instead tends to stratify the density on the vertical (\tsl{i.e.}, orthogonal
to the previously mentioned effect) direction.
Thus, if we can neglect temperature and salinity fluctuations in a first approximation, we cannot do so for the variations of the density.

Another important feature of the motion of water currents in oceans is the so-called Ekman pumping effect. This is a mechanism of dissipation of kinetic energy, which
originates from a boundary layer (the celebrated Ekman layer) close to the oceanic bottom, but involves in fact a global dynamics, affecting also the mean flow
in the bulk. On a rigorous ground, the mathematical analysis of Ekman boundary layers was initiated by the pioneering works \cite{Gr-Masm} (for the well-prepared data case)
and \cite{Masm} (for ill-prepared initial data). 
In those works, it is proven that, in the fast rotation regime, an Ekman pumping term appears in the target momentum equation. This Ekman pumping term takes the form of a 
linear damping term $\alpha u$, where $\alpha>0$.
We refer to the book \cite{C-D-G-G} for a complete overview of the subject of (homogeneous) fluids in fast rotation, as well as for a study of Ekman layers.
We also refer to \cite{B-F-P} for a description of Ekman boundary layers for compressible flows in presence of stratification, and to
\cite{Brav-F} for a new approach to the derivation of the Ekman pumping effect (without a description of the Ekman layers, though),
implemented in the context of non-homogeneous incompressible fluids.

\medbreak
In this work, we are interested in the (global) well-posedness issue for a model which may be seen as a rough approximation of ocean dynamics
in the regime of fast rotation.
Notice that the fluid can be assumed to be incompressible, at least at a fairly good level of approximation.
In addition, at large scales, the Reynolds number of the flow is typically very high (see \tsl{e.g.} the discussion in Section 4.4 of \cite{CR-B}),
so viscous effects may be discarded.

Motivated by the previous considerations, and restricting our attention to the (mean) motion in the bulk,
it is thus natural to consider the dynamics of an incompressible inviscid fluid
presenting density variations and energy dissipation through a damping term.

\subsection{The equations under study} \label{ss:equations}

At the mathematical level, the system we are interested in is described by the so-called \emph{non-homogeneous incompressible Euler equations}.
If we denote by $\rho=\rho(t,x)\geq0$ the density of the fluid, by $u=u(t,x)\in\R^d$, with $d\geq2$, its velocity field and by $\Pi=\Pi(t,x)\in\R$ its pressure field,
those equations can be written in the following way:
\begin{equation} \label{eq:dd-E}
\left\{\begin{array}{l}
\d_t\rho\,+\,u\cdot\nabla\rho\,=\,0 \\[1ex]
\rho\,\d_tu\,+\,\rho\,u\cdot\nabla u\,+\,\nabla\Pi\,+\,\alpha\,\rho^\g\,u\,=\,0 \\[1ex]
\div u\,=\,0\,.
       \end{array}
\right.
\end{equation}
Here, $\alpha>0$ is a positive parameter and $\g\in\{0,1\}$. The term
\begin{equation} \label{eq:damping}
\mf D^\g_\alpha(\rho,u)\,:=\,\alpha\,\rho^\g\,u 
\end{equation}
is thus a damping term, whose role is, precisely, to encode the Ekman pumping effect of ocean dynamics. On the other hand, for the sake of simplicity, in
the system above we discard remainders coming from to the (fast) Earth rotation.

Notice that a somewhat similar version of the previous system, but with a static density profile $\rho=\rho(x)$, was derived in \cite{Brav-F}
from a singular limit problem for non-homogeneous incompressible viscous fluids in thin domains, in the fast rotation regime.
Although the target system coming from fast rotating fluids should be only two-dimensional, for the sake of generality in this work we set system
\eqref{eq:dd-E} in
\[
 \R_+\times\R^d\,,\qquad\qquad \mbox{ with }\qquad d\geq2\,.
\]
We are interested in the initial value problem related to equations \eqref{eq:dd-E}, thus we supplement them with an initial datum
\begin{equation} \label{eq:in-datum}
 \big(\rho,u\big)_{|t=0}\,=\,\big(\rho_0,u_0\big)\qquad\qquad \mbox{ such that }\qquad 
 \div u_0\,=\,0\,.
\end{equation}
Throughout this work, we assume \emph{absence of vacuum}. This means that there exist two positive constants $0<\rho_*\leq \rho^*$ such that
\begin{equation} \label{eq:vacuum}
 0<\rho_*\leq\rho_0\leq\rho^*\,.
\end{equation}

\subsection{Previous results} \label{ss:previous}

When $\alpha=0$, we observe that the previous system reduces to the non-homogeneous incompressible Euler equations, namely
\[ 
\left\{\begin{array}{l}
\d_t\rho\,+\,u\cdot\nabla\rho\,=\,0 \\[1ex]
\rho\,\d_tu\,+\,\rho\,u\cdot\nabla u\,+\,\nabla\Pi\,=\,0 \\[1ex]
\div u\,=\,0\,.
       \end{array}
\right. \leqno{\rm (ddE)}
\] 
This system can be seen as a generalisation of the classical Euler system (obtained when $\rho\equiv1$) to the case of fluids presenting density variations.

On the mathematical side, we notice that, despite the presence of the (non-local) pressure term, equations (ddE) retain the structure of a transport system,
at least under the assumption \eqref{eq:vacuum} of absence of vacuum. Thus, their well-posedness can be addressed in any reasonable functional spaces
continuously embedded in the set of globally Lipschitz functions $W^{1,\infty}$. For instance, papers \cite{D1, D:F} established local well-posedness
in the class of Besov spaces $B^s_{p,r}=B^s_{p,r}(\R^d)$ verifying $B^s_{p,r}\hookrightarrow W^{1,\infty}$,
up to the endpoint case $p=+\infty$ and $s=r=1$.
Recall that the previous embedding holds true if and only if the triplet of indices $(s,p,r)\in\R\times[1,+\infty]\times[1,+\infty]$
satisfies the condition
\begin{equation} \label{cond:Lipschitz}
s\,>\,1\,+\,\frac{d}{p}\qquad\qquad\quad \mbox{ or }\qquad\qquad\quad s\,=\,1\,+\,\frac{d}{p}\quad \mbox{ and }\quad r\,=\,1\,.
\end{equation}
Paper \cite{F_2012} proved instead well-posedness under tangential (also dubbed ``striated'' and ``conormal'') regularity assumptions \tsl{\`a la Chemin}.
We refer to the introduction of those papers for a more complete overview of the literature concerning well-posedness of equations (ddE).

It is worth to point out that all those results hold true only \emph{locally in time}, even in the two-dimensional case. This is a purely heterogeneity effect, inasmuch
as variations of the density are responsible for vorticity production, thus causing the breaking of all the techniques available from the classical case $\rho\equiv1$
to prove global well-posedness.
As a matter of fact, global existence of regular solutions for equations (ddE) still remains as an open problem in general (however, see the recent preprint
\cite{CWZZ} for a study about global asymptotic stability around the Couette flow).

\medbreak
The study of the case $\alpha>0$, namely of system \eqref{eq:dd-E}, seems instead new in the literature, even though 
some previous results are in fact available for systems arising from fluid dynamics in presence of a damping term.

For instance, the compressible counterpart of equations \eqref{eq:dd-E}, where one drops the condition $\div u=0$ and lets the pressure term
$\Pi=\Pi(\rho)$ depend on the density $\rho$, was first investigated in \cite{S-T-W}. There, the authors proved that, for small initial data, the presence
of the damping term removes the appearing of finite time singularities and enables to prove global existence results. Fine results
(global well-posedness and long time dynamics) about the damped compressible Euler system have recently been obtained in the series of papers
\cite{CB:D2, CB:D3, CB:Z2}, as particular cases of a general theory dealing with
partially dissipative hyperbolic systems in a critical functional framework, a theory originating from the work \cite{B:Z} of Beauchard and Zuazua.

It is worth mentioning that, in all the works quoted above, the damping term was of the same form as $\mf D^\g_\alpha$ in \eqref{eq:damping}, with $\g=1$.
Roughly speaking, in this case and for small density perturbations around a constant state, one can then divide the momentum equation by $\rho$
and obtain a damped equation for $u$, for which it is then possible to derive an exponential decay in \tsl{e.g.} $L^2$.
Observe that, despite this simple heuristics, the fact that this decay implies global well-posedness is not straightforward \tsl{a priori}, and is indeed a remarkable
property due to the structure of the hyperbolic system: the dissipation does not act on all the components of the solution, but only on some of them.
We refer to \cite{B:Z} for a more in-depth discussion about this point.

From this point of view, our study about the incompressible case \eqref{eq:dd-E} will suffer of the same difficulty. Nonetheless,
the introduction of non-local effects due to the pressure function determines the loss of the hyperbolic structure which was underlying the compressible equations.
In other words, the coupling between mass and momentum conservation equations becomes weaker in the incompressible setting, thus making
the approach of the above mentioned \cite{CB:D2, CB:D3} out of use here and, at the same time, implying the appearance of new
difficulties in the analysis.

\medbreak
On the other hand, previous results about fluid systems with damping in the velocity equation exist also in the incompressible case, albeit
(to the best of our knowledge) only in the homogeneous or quasi-homogeneous situation. By this, we mean that
the systems under study were obtained under the so-called Boussinesq approximation;
as a consequence, the equation for the velocity field $u$ did not involve any heterogeneity in the convective term $\d_tu+u\cdot\nabla u$.
For instance, this is the framework considered in \cite{B-CB-P}, in the context of the Inviscid Porus Medium equation, and in \cite{Ca-Co-L}
in the context of the $2$-D Boussinesq system. Both those works address the question of the asymptotic stability of stably stratified solutions.
We remark that, as a consiquence of the Boussinesq approximation, at principal order the velocity field $u$ satisfies an Euler-type equation with pressure $\nabla\Pi$.
Correspondingly, the damping term considered in those works is of the form $\mf D^\g_\alpha$ with $\g=0$, namely it reduces to $\alpha\,u$ as in the case
of homogeneous fluid flows.
It follows that an application of the curl operator immediately
erases the presence of the pressure term and gives a damping effect on the vorticity function.

Because of the presence of density variations in equations \eqref{eq:dd-E}, our case will be very different from the ones discussed in \cite{B-CB-P, Ca-Co-L},
even for the choice $\g=1$. In particular, the pressure gradient cannot be eliminated from the equations; at the same time,
it is not clear, in our context, that one can define a ``damped mode'' as in \cite{B-CB-P} (see also works \cite{B:Z, CB:D2, CB:D3} for the compressible Euler equations)
and give a precise description of the asymptotic behaviour of the solutions when $t\tend+\infty$. We refer to the next subsection for more details about this
issue.

\subsection{Main results and strategy of the proof: an overview} \label{ss:overview}

In the present paper, we address the question of the global solvability for the non-homogeneous incompressible Euler equations with damping,
namely of system \eqref{eq:dd-E}-\eqref{eq:damping} with $\alpha>0$, in both cases $\g=0$ and $\g=1$.
Under suitable smallness assumptions (see more comments below), we are able to establish global in time well-posedness in the framework of non-homogeneous Besov spaces
$B^s_{p,r}$ satisfying either one of the two conditions appearing in \eqref{cond:Lipschitz}. Recall that each one of those conditions guarantees the embedding property
of $B^s_{p,r}$ in the space of globally Lipschitz functions.

Here, we want to give an overview of our main results and of the strategy of the proof. We refer to Section \ref{s:results} for the precise statements.

\paragraph*{The results.}
As already said, we prove global existence and uniqueness of solutions under suitable smallness assumptions. As a byproduct of our proof,
we also establish exponential decay of the velocity field $u$ and of the pressure term $\nabla\Pi$ in the high regularity norms.
For the sake of clarity, let us divide our discussion into two cases, depending on the type of results we obtain. 

The first class of results concerns the case of general space dimension $d\geq2$;
the precise statements are Theorem \ref{th:g=1_d} (for the case $\g=1$) and Theorem \ref{th:g=0_d} (for the case $\g=0$).
In those results, we formulate a \emph{non-linear} smallness assumption guaranteeing global existence,
where the term ``non-linear'' refers to the dependence of the condition on the norms of the initial
datum $\big(\rho_0,u_0\big)$ and on the damping coefficient $\alpha$. Roughly speaking, the meaning of such a condition is to require
either the norm of the initial datum to be small, or the damping coefficient to be sufficiently large, in order to deduce global in time results. 
Interestingly, the norms entering into play in the smallness conditions are only of lower order, resting on the $B^1_{\infty,1}$ norm
of the heterogeneity $\rho_0-1$ and $u$ (actually, when $\g=0$ one must require a control on the full $L^2\cap B^1_{\infty,1}$ norm of $u$).
Notice that, in the case $\g=0$, the smallness assumption reads in a slightly more complicated way, as the analysis of the system becomes more involved than in the case
$\g=1$.

The second class of results, instead, concerns the case $d=2$; however, we are forced to consider $\g=1$ here.
Under these assumptions,  we establish in Theorem \ref{th:g=1_2} global existence under a different smallness condition, resting
\emph{on the size of the non-homogeneity $\rho_0-1$ only}.
More precisely, we prove the existence of a unique global solution, provided $\rho_0-1$ is small enough in a suitable norm.
The smallness condition on $\rho_0-1$ depends on the size of the damping coefficient $\alpha>0$ and of the initial velocity field $u_0$,
which can be chosen arbitrarily large.

Unfortunately, when $d=2$ and $\g=0$ our method of proof breaks down and it is not clear whether or not a similar result can be obtained also in this case.

\paragraph*{Approach.}
The strategy of the proof of our main results is based on three main steps:
first, proving a local existence result (this is a quite simple adaptation of the theory of \cite{D1, D:F} for the case $\alpha=0$);
second, proving a continuation criterion of Beale-Kato-Majda type, resting only on the $L^\infty$ norm of $\nabla u$;
finally, controlling the low regularity norms globally in time.
On the one hand, this is the key in order to get a smallness assumption which depends only
on the low regularity norms of the initial datum. On the other hand, we point out that this approach yields exponential decay of the solution
(more precisely, of $u$ and $\nabla\Pi$)
only in the low regularity norms, and in principle not in the high regularity ones, as one would expect.
Actually, the sought decay for the higher regularity norms is obtained \tsl{a posteriori}, using the decay of low regularity norms and fine estimates, which
are similar in spirit to the ones which are necessary to establish the continuation criterion.

As already remarked above, it turns out that the case $\g=1$ is slightly easier than the case $\g=0$. The reason for this is that, when $\g=1$ and
in absence of vacuum \eqref{eq:vacuum}, one can divide the momentum equation by $\rho$ and find an equation of Euler type for the
rescaled velocity field $\wu\,:=\,e^{\alpha t}\,u$: after setting $\wtilde\Pi\,:=\,e^{\alpha t}\,\Pi$, one has
\[
 \d_t\wu\,+\,e^{-\alpha t}\,\wu\cdot\nabla\wu\,=\,\,-\,\frac{1}{\rho}\,\nabla\wtilde\Pi\,.
\]
Notice that the exponential growth of the right-hand side is only a fake: taking the divergence of the previous equation, one discovers that (as it is the case
in the incompressible Euler equations) the pressure gradient $\nabla\wtilde\Pi$ is a quadratic term in $\wu$. As a consequence, the right-hand side
of the previous equation actually decays exponentially in time. In the special case $d=2$ and $\g=1$, one has to complement the previous observations with
improved transport estimates in Besov spaces of regularity index $s=0$, originally due to Vishik \cite{Vis} (see also \cite{HK}). Controlling
the $B^0_{\infty,1}$ norm of the vorticity \emph{linearly} in the Lipschitz norm of the velocity field is then the key (together with the absence of
the stretching term) to get a smallness condition depending upon the size of $\rho_0-1$ only.

When $\g=0$, instead, the equation for the velocity field $u$ becomes
\[
 \d_tu\,+\,u\cdot\nabla u\,+\,\frac{1}{\rho}\,\nabla\Pi\,+\,\frac{\alpha}{\rho}\,u\,=\,0\,,
\]
from which we can deduce an exponential decay with sharp rate $-\alpha_*t$ only in $L^2$, where we have defined $\alpha_*\,:=\,\alpha/\rho^*$, $\rho^*$ being the maximum
value of the density, recall \eqref{eq:vacuum}. When passing to the estimates of high regularity norms, instead, the presence of a variable coefficient $1/\rho$
in the damping term is a source of difficulty, because it creates remainders, which must be treated as forcing term though they do not possess any decay in time.
The basic idea to overcome this difficulty is then to split the damping term as follows, for any $0<\beta\leq \alpha_*$:
\[
 \frac{\alpha}{\rho}\,u\,=\,\beta\,u\,+\,\left(\frac{\alpha}{\rho}\,-\,\beta\right)\,u\,.
\]
From the first term on the right, we can deduce exponential decay similarly to the previous case $\g=1$, whereas for the second term on the right we use the positivity
of its coefficient to keep it into the left-hand side of the estimates and, finally, discard it.
In doing so, however, we cannot forget about commutator terms arising from the estimates for the Besov norms (either endpoint norms in $B^1_{\infty,1}$, or higher regularity
norms in $B^s_{p,r}$): the basic idea is to implement a systematic use of sharp commutator estimates and interpolation in order to make always a (lower order) norm
of the velocity appear, which in turn can be controlled by the $L^2$ norm, for which one disposes of the sharp decay.

To conlude this part, let us mention 
a second major difficulty arising in the case $\g=0$, namely the presence of a \emph{linear} term in the pressure equation.
Because of this term, when $\g=0$ it is no more true that the pressure gradient is quadratic in the velocity field, but only linear, thus preventing us
from using the argument of the case $\g=1$ and making the analysis much more delicate.
In fact, it turns out that the same idea depicted above (based on fine commutator estimates and interpolation, to make an $L^2$ norm of $u$ appear)
allows us to handle this second obstacle and get the desired global in time result also in the instance $\g=0$.

\subsection*{Organisation of the paper}

The paper is organised in the following way. In the next section, we state our main results. In Section \ref{s:tools}, we collect some tools from harmonic analysis,
in particular the Littlewood-Paley theory, paradifferential calculus and transport estimates in Besov spaces.
In Section \ref{s:local} we study local well-posedness for system \eqref{eq:dd-E} and establish a continuation criterion
in terms of the Lipschitz norm of the velocity field only. The analysis of this section is carried out for both cases $\g=1$ and $\g=0$ in parallel,
as no essential differences arise.
Section \ref{s:g=1} is devoted to the proof of the global well-posedness statements related to the case $\g=1$, namely Theorem \ref{th:g=1_d} (in general space dimension) and
Theorem \ref{th:g=1_2} (related to the case $d=2$). Finally, in Section  \ref{s:g=0} we tackle the proof of Theorem \ref{th:g=0_d}, namely of global well-posedness 
for the case $ \g = 0 $.

\subsection*{Notation} \label{s:not}

Before presenting the main results of the work, let us fix some notations. For an interval $I\subset \R$ and a Banach space $X$,
we denote by $\mc C\big(I;X\big)$ the space of continuous bounded functions on $I$ with values in $X$. For any $p\in[1,+\infty]$,
the symbol $L^p\big(I;X\big)$ stands for the space of measurable functions on $I$ such that the map $t\mapsto \left\|f(t)\right\|_{X}$ belongs to $L^p(I)$. We also define $\mc C_b\big(I;X\big)\,=\,\mc C\big(I;X\big)\cap L^\infty\big(I;X\big)$.
Similarly, if $Q\subset \R^{1+d}=\R_t\times\R^d_x$, we set $\mc C_b(Q)\,=\,\mc C(Q)\cap L^\infty(Q)$.

In addition, let $Y$ be the predual of $X$ and denote by $\lan\cdot,\cdot\ran_{X\times Y}$ the duality pairing between $Y$
and $X=Y^*$.
Then we set $\mc C_\weak\big(I;X\big)$ the space of functions $f$ from $I$ to $X$ which are weakly continuous, namely such that, for any $\phi\in Y$,
the map $t\mapsto \lan f(t),\phi \ran_{X\times Y}$ is continuous over $I$.
We use the same notation for scalar-valued and vector-valued functions.


\section*{Acknowledgements}

{\small

The authors are indebted to the anonymous referees, whose careful reading and interesting remarks allowed to improve the presentation of the paper as well as
the formulation of some statements, and to clarify some aspects of our results.

The work of the second author has been partially supported by the project CRISIS (ANR-20-CE40-0020-01), operated by the French National Research Agency (ANR).

}

\section{Main results} \label{s:results}

In this section, we give the precise statement of our main results.
We start by discussing the case $\g=1$, which looks physically more pertinent and, to some extent, simpler. Then, we consider the case $\g=0$,
which is technically more involved. Finally, we focus on the special case of space dimension $d=2$ and choice $\g=1$, for which we are able to give an improvement 
of the previous results.

\subsection{The case ${\g=1}$ when ${d\geq2}$} \label{ss:g=1_d}
Our first result states global well-posedness under a non-linear smallness condition linking the norm of the initial datum and the damping term, and holds
true in any space dimension.

\begin{thm} \label{th:g=1_d}
Let $d\geq 2$ and fix $\g=1$ in system \eqref{eq:dd-E}. Let us fix indices $(s,p,r)\in \R\times[1,+\infty]\times[1,+\infty]$ such that $p\geq2$ and condition
\eqref{cond:Lipschitz} is satisfied. Consider two constants $0<\rho_*<\rho^*$.

There exists a constant $K>0$, only depending on $\rho_*$ and $\rho^*$, and an exponent $\eta=\eta(d)>0$, only depending on the space
dimension $d\geq2$, such that the following property holds true.

Take any initial datum $\big(\rho_0,u_0\big)$ verifying \eqref{eq:in-datum}-\eqref{eq:vacuum} with exactly the previous constants $\rho_*$ and $\rho^*$,
such that $u_0\in L^2\cap B^s_{p,r}$ and $\nabla\rho_0\in B^{s-1}_{p,r}$. Take $\alpha>0$ in equations \eqref{eq:dd-E}. Assume that
\begin{equation} \label{eq:smallness}
\frac{1}{\alpha}\,\left\| u_0\right\|_{B^{1}_{\infty,1}}\,
\exp\left(\left(1\,+\,\left\|\rho_0\,-\,1\right\|^\eta_{B^1_{\infty,1}}\right)\, e^{K}\,\left(\frac{1}{\alpha}\,\| u_0 \|_{L^2} \,+\,1 \right)\right)\,<\,2
\end{equation}

Then, there exists a unique global in time solution $\big(\rho,u,\nabla\Pi\big)$ to system \eqref{eq:dd-E},
related to the initial datum $\big(\rho_0,u_0\big)$, such that:
\begin{enumerate}[(i)]
 \item $\rho\in \mc C_b\big(\R_+\times\R^d\big)$, with $\rho_*\leq\rho(t)\leq\rho^*$ for any $t\in\R_+$, with in addition
$\nabla\rho\in\mc C_\weak\big(\R_+;B^{s-1}_{p,r}\big)$ in the case $p<+\infty$, $\rho\in\mc C_\weak\big(\R_+;B^s_{\infty,r}\big)$ if $p=+\infty$;
\item $u\in\mc C^1\big(\R_+;L^2\big)\cap \mc C_\weak\big(\R_+;B^s_{p,r}\big)$;
\item $\nabla\Pi\in\mc C\big(\R_+;L^2\big)\cap \mc C_\weak\big(\R_+;B^s_{p,r}\big)$.
\end{enumerate}
Moreover the map
 \begin{equation}
 \label{decay:est:1}
t\,\mapsto\, e^{\alpha\,t} \, \left\|u(t)\right\|_{L^2\cap B^s_{p,r}} +  e^{2\,\alpha\,t}\,\left\|\nabla\Pi(t)\right\|_{L^2\cap B^s_{\infty,r}} \qquad \mbox{ is bounded over }\ \R_+\,.
\end{equation}

When $r<+\infty$, the continuity in time with values in Besov spaces holds with respect to the strong topology.
\end{thm}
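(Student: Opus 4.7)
The strategy will follow the three-step scheme described in Section~\ref{ss:overview}. First, a slight adaptation of the local well-posedness theory of~\cite{D1,D:F}, carried out in Section~\ref{s:local}, produces a unique local-in-time solution in the class announced in items~(i)--(iii); the same section will also supply a Beale--Kato--Majda type continuation criterion involving only the Lipschitz norm of the velocity field. Consequently, it suffices to derive, under the smallness hypothesis~\eqref{eq:smallness}, a global a priori bound on $\|u\|_{L^1_t(B^1_{\infty,1})}$.

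To this end, I would systematically exploit the rescaled unknowns $\wu=e^{\alpha t}u$ and $\wtilde{\Pi}=e^{\alpha t}\Pi$. After dividing the momentum equation by $\rho$ (legitimate by~\eqref{eq:vacuum}), the system recasts as the Euler-type equation
\[
\d_t\wu\,+\,e^{-\alpha t}\,\wu\cdot\nabla\wu\,+\,\frac{1}{\rho}\,\nabla\wtilde{\Pi}\,=\,0,
\]
with pressure determined by the elliptic problem $\div(\rho^{-1}\nabla\wtilde{\Pi})=-e^{-\alpha t}\div(\wu\cdot\nabla\wu)$. Two basic estimates enter the play. A standard weighted $L^2$ energy computation on the original equation yields $\|u(t)\|_{L^2}\le(\rho^*/\rho_*)^{1/2}\,e^{-\alpha t}\|u_0\|_{L^2}$, whence $\|\wu(t)\|_{L^2}$ stays controlled by $\|u_0\|_{L^2}$ alone. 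Secondly, rewriting the pressure equation as the perturbed Laplacian
\[
\Delta\wtilde{\Pi}\,=\,-\,e^{-\alpha t}\,\div(\wu\cdot\nabla\wu)\,-\,\div\!\left(\frac{1-\rho}{\rho}\nabla\wtilde{\Pi}\right),
\]
and combining standard elliptic regularity with paraproduct and commutator estimates to absorb the $(\rho-1)$-perturbation at the $B^1_{\infty,1}$ level, produces an estimate of the form
\[
\|\nabla\wtilde{\Pi}(t)\|_{L^2\cap B^1_{\infty,1}}\,\le\,C_\rho\,e^{-\alpha t}\,\|\wu(t)\|_{B^1_{\infty,1}}\,\big(\|u_0\|_{L^2}\,+\,\|\wu(t)\|_{B^1_{\infty,1}}\big),
\]
whenever $\|\rho-1\|_{B^1_{\infty,1}}$ is small enough, with $C_\rho$ continuous in that norm.

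Feeding these estimates into the Vishik-type transport inequality for $\wu$ in $B^1_{\infty,1}$, and coupling it with the pure transport estimate for the density
\[
\|\rho(t)-1\|_{B^1_{\infty,1}}\,\le\,\|\rho_0-1\|_{B^1_{\infty,1}}\,\exp\!\left(C\int_0^t \|u(\tau)\|_{B^1_{\infty,1}}\,\dd\tau\right),
\]
I would perform a coupled bootstrap on the pair $(\wu,\rho-1)$ based on the ansatz $\|\wu\|_{L^\infty_t(B^1_{\infty,1})}\le 2\|u_0\|_{B^1_{\infty,1}}$. The explicit damping factor $e^{-\alpha\tau}$ appearing in every forcing term gains a prefactor $1/\alpha$ upon integration, so the bootstrap closes as soon as $\alpha^{-1}\|u_0\|_{B^1_{\infty,1}}\,C_\rho\,(1+\alpha^{-1}\|u_0\|_{L^2})$ is sufficiently small: this is precisely the non-linear smallness encoded by~\eqref{eq:smallness}, whose exponential factor arises from the dependence of $C_\rho$ on $\|\rho_0-1\|_{B^1_{\infty,1}}$ (propagated in time by the density transport estimate itself) combined with the linear contribution of the pressure. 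The continuation criterion of Section~\ref{s:local} then promotes the resulting a priori control into the claimed global solution, and uniqueness follows from the local theory.

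Finally, the exponential decay~\eqref{decay:est:1} in the higher-regularity norms $B^s_{p,r}$ and $B^s_{\infty,r}$ is obtained a posteriori. Once $\|\wu\|_{L^\infty_t(B^1_{\infty,1})}$ and $\|\rho-1\|_{L^\infty_t(B^1_{\infty,1})}$ are known to be globally small, higher-order transport estimates in $B^s_{p,r}$, which are linear in the high norm with source governed by the small low norm, together with the analogous elliptic bound for $\nabla\wtilde{\Pi}$ in $B^s_{p,r}$, yield uniform-in-time bounds on $\|\wu\|_{B^s_{p,r}}$ by a Gr\"onwall argument; undoing the rescaling produces the $e^{-\alpha t}$ decay of $u$, while the $e^{-2\alpha t}$ decay of $\nabla\Pi$ follows from the quadratic structure of the pressure equation. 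The main obstacle I anticipate along the way is the careful handling of the variable-coefficient elliptic operator $\div(\rho^{-1}\nabla\cdot)$ at the endpoint Besov index $p=\infty$, where paraproduct decompositions, commutators and interpolation must be balanced to avoid any loss of derivative and simultaneously to extract the $L^2$-scale dependence that is manifest in~\eqref{eq:smallness} through the factor $\|u_0\|_{L^2}/\alpha$; this delicate interplay is the source of the exponent $\eta(d)$ appearing in the smallness condition.
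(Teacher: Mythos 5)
Your overall skeleton --- the rescaling $\wu=e^{\alpha t}u$, the Lipschitz-only continuation criterion, closing an a priori bound on the low norm $B^1_{\infty,1}$ and recovering the $B^s_{p,r}$ decay a posteriori --- coincides with the paper's strategy. However, two of your intermediate steps, as you have set them up, would not yield the theorem under the hypothesis \eqref{eq:smallness} as stated. The first concerns the pressure. You obtain the Besov bound on $\nabla\wtilde\Pi$ by viewing $\div\big(\tfrac{1-\rho}{\rho}\nabla\wtilde\Pi\big)$ as a perturbation of the Laplacian and absorbing it, which forces you to assume $\|\rho-1\|_{B^1_{\infty,1}}$ small. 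Theorem \ref{th:g=1_d} makes no such assumption: $\|\rho_0-1\|_{B^1_{\infty,1}}$ may be arbitrarily large, its size being paid for only through the exponential in \eqref{eq:smallness}. The paper avoids any smallness by first getting the $L^2$ bound from Lax--Milgram (Lemma \ref{l:laxmilgram}), then writing $-\Delta\Pi=-\nabla\log\rho\cdot\nabla\Pi+\rho\,\div F$ and controlling $\nabla\log\rho\cdot\nabla\Pi$ via a paraproduct decomposition combined with the interpolation $\|\nabla\Pi\|_{B^{1/2}_{\infty,1}}\lesssim\|\nabla\Pi\|_{L^2}^{\de}\,\|\nabla\Pi\|_{B^1_{\infty,1}}^{1-\de}$ and Young's inequality; the high norm is then absorbed for free and the only price is the polynomial factor $\big(1+\|\nabla\rho\|^\eta\big)$ in \eqref{gen:pres:ine}. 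That interpolation--Young step, not a perturbative absorption, is the actual origin of the exponent $\eta$.

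The second issue is the bootstrap itself. The ansatz $\|\wu\|_{L^\infty_t(B^1_{\infty,1})}\le 2\|u_0\|_{B^1_{\infty,1}}$ cannot be closed under \eqref{eq:smallness}: the pressure contributes a term that is \emph{linear} in $\|\wu\|_{B^1_{\infty,1}}$, namely $R_1\,e^{-\alpha\tau}\|u_0\|_{L^2}\,\|\wu\|_{B^1_{\infty,1}}$ with $R_1=1+\|\rho_0-1\|_{B^1_{\infty,1}}^{\eta}$, whose time integral under your ansatz is of size $R_1\|u_0\|_{L^2}\|u_0\|_{B^1_{\infty,1}}/\alpha$; requiring this to be $\le\|u_0\|_{B^1_{\infty,1}}$ forces $R_1\|u_0\|_{L^2}/\alpha=O(1)$, which \eqref{eq:smallness} does not guarantee (it allows $R_1\|u_0\|_{L^2}/\alpha$ to be huge at the price of $\|u_0\|_{B^1_{\infty,1}}/\alpha$ being exponentially small). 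The paper instead applies Gr\"onwall to exponentiate the entire forcing, arriving at \eqref{est:u-partial}, and then runs the continuity argument on the integral quantity $\int_0^t e^{-\alpha\tau}\|\wu(\tau)\|_{B^1_{\infty,1}}\,\dd\tau\le 2$ rather than on the sup norm. This is exactly what keeps $\|u_0\|_{B^1_{\infty,1}}/\alpha$ \emph{outside} the exponential and places $R_1$ and $\|u_0\|_{L^2}/\alpha$ \emph{inside}, reproducing the precise shape of \eqref{eq:smallness}; your closing condition $\alpha^{-1}\|u_0\|_{B^1_{\infty,1}}\,C_\rho\,(1+\alpha^{-1}\|u_0\|_{L^2})$ small is a genuinely different inequality, neither implying nor implied by \eqref{eq:smallness}. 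The remaining ingredients of your sketch (energy decay, density transport estimate, a posteriori propagation of the high norms as in Lemma \ref{per:of:norm}) are consistent with the paper.
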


Observe that the precise value $2$ on the right-hand side of \eqref{eq:smallness} is taken just for convenience, in order to
give a quantitative version of the smallness condition which is required to deduce global well-posedness. However, 
one can take any other constant, up to modifying the value of the constant $K$.

Next, let us formulate a remark about the previous smallness conditions \eqref{eq:smallness}.
\begin{rmk} \label{r:smallness}
Conditions \eqref{eq:smallness} rest only on $\alpha$, on the $B^1_{\infty,1}$ norm of $\rho-1$ and on the $L^2\cap B^1_{\infty,1}$ norm of
$u_0$. In particular, the initial data may have large norms in higher regularity spaces.
\end{rmk}

From Theorem \ref{th:g=1_d}, we immediately deduce the following statement, which tells us that, for any initial datum $\big(\rho_0,u_0\big)$ as above,
then global existence holds, provided the damping coefficient $\alpha$ is large enough.

\begin{cor} \label{c:small-alpha}
Let $d\geq 2$ and fix $\g=1$ in system \eqref{eq:dd-E}. Fix indices $(s,p,r)\in \R\times[1,+\infty]\times[1,+\infty]$ such that $p\geq2$ and condition
\eqref{cond:Lipschitz} is satisfied. Consider two constants $0<\rho_*<\rho^*$ and take an initial datum $\big(\rho_0,u_0\big)$ verifying conditions \eqref{eq:in-datum}
and \eqref{eq:vacuum}, with $u_0\in L^2\cap B^s_{p,r}$ and $\nabla\rho_0\in B^{s-1}_{p,r}$.

Then, there exists $\alpha_0\,=\,\alpha_0\left(\rho_*,\rho^*,d,\|\rho_0-1\|_{B^1_{\infty,1}},\|u_0\|_{L^2\cap B^1_{\infty,1}}\right)\,>\,0$, depending only
on the quantities in the brackets, such that, for any $\alpha>\alpha_0$, there exists a unique global in time solution $\big(\rho,u,\nabla\Pi\big)$
to system \eqref{eq:dd-E}, related to the initial datum $\big(\rho_0,u_0\big)$ and satisfying the regularity properties (i)-(ii)-(iii)
stated in Theorem \ref{th:g=1_d}.
\end{cor}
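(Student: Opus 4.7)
The plan is to read Corollary \ref{c:small-alpha} as an immediate consequence of Theorem \ref{th:g=1_d}, viewing the non-linear smallness condition \eqref{eq:smallness} as an inequality in the single variable $\alpha$ once the initial datum has been fixed.

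To make this precise, I would introduce the shorthand
$$A \,:=\, \|u_0\|_{B^1_{\infty,1}}, \qquad B \,:=\, \|\rho_0 - 1\|_{B^1_{\infty,1}}, \qquad C \,:=\, \|u_0\|_{L^2},$$
all of which are finite under the assumptions of the corollary, thanks to the embedding $B^s_{p,r} \hookrightarrow B^1_{\infty,1}$ guaranteed by \eqref{cond:Lipschitz}. With these notations, the left-hand side of \eqref{eq:smallness} takes the form
$$\Phi(\alpha) \,:=\, \frac{A}{\alpha}\, \exp\!\left(\bigl(1 + B^\eta\bigr)\, e^K\, \Bigl(\tfrac{C}{\alpha} + 1\Bigr)\right),$$
and the whole task reduces to showing that $\Phi(\alpha)<2$ for $\alpha$ large enough.

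The key step is then to examine the asymptotic behaviour of $\Phi$ as $\alpha \to +\infty$. The argument of the exponential is monotone in $\alpha$ and converges to the finite limit $(1+B^\eta)\,e^K$, so the exponential factor is uniformly bounded (by $\exp\!\bigl(2(1+B^\eta)\,e^K\bigr)$ as soon as $\alpha\geq C$, say), whereas the prefactor $A/\alpha$ vanishes. Hence $\Phi(\alpha) \to 0$, and by continuity there exists a threshold $\alpha_0 > 0$ such that $\Phi(\alpha) < 2$ for every $\alpha > \alpha_0$. For such values of $\alpha$, the smallness assumption \eqref{eq:smallness} is satisfied, and Theorem \ref{th:g=1_d} directly delivers the unique global solution $(\rho, u, \nabla\Pi)$ with the regularity properties (i)-(ii)-(iii).

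I expect no genuine obstacle in this argument: it amounts to an inspection of the explicit dependence of the smallness condition on $\alpha$. The only mildly delicate point worth stressing is that the threshold $\alpha_0$ just constructed depends only on $K$, $\eta$, $A$, $B$ and $C$, and therefore only on $\rho_*$, $\rho^*$, $d$, $\|\rho_0-1\|_{B^1_{\infty,1}}$ and $\|u_0\|_{L^2 \cap B^1_{\infty,1}}$, as required in the statement: in particular, it is independent of the higher-regularity norms $\|u_0\|_{B^s_{p,r}}$ and $\|\nabla\rho_0\|_{B^{s-1}_{p,r}}$, which is transparent from the explicit form of \eqref{eq:smallness}.
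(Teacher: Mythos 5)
Your proposal is correct and coincides with the paper's intent: the paper states Corollary \ref{c:small-alpha} as an immediate consequence of Theorem \ref{th:g=1_d}, precisely because the left-hand side of \eqref{eq:smallness} tends to $0$ as $\alpha\to+\infty$ with the initial datum fixed, which is exactly the computation you carry out. Your remark on the dependence of $\alpha_0$ only on the low-regularity norms is also the right observation and matches the statement.
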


Of course, also the ``dual'' statement holds true, in which one fixes the value of $\alpha$ and requires the smallness of the norm of the initial datum.
The interesting point of condition \eqref{eq:smallness} is that it imposes smallness only on the $B^1_{\infty,1}$ norm of the initial velocity,
once one fixes the size of the heterogeneity and of the initial kinetic energy.

\begin{cor}
\label{cor:only:besov}
Let $d\geq 2$ and take $\g=1$ and $\alpha>0$ in system \eqref{eq:dd-E}. 
Fix indices $(s,p,r)\in \R\times[1,+\infty]\times[1,+\infty]$ such that $p\geq2$ and condition
\eqref{cond:Lipschitz} is satisfied. For $0<\rho_*<\rho^*$, consider an initial datum $\big(\rho_0,u_0\big)$ verifying the assumptions
of Corollary \ref{c:small-alpha}.

Then, there exists $ c_0\,=\, c_0\left(\rho_*,\rho^*,d,\alpha,\|\rho_0-1\|_{B^1_{\infty,1}},\|u_0\|_{L^2 }\right)\,>\,0$, depending only
on the quantities in the brackets, such that, if $ \| u_0 \|_{B^{1}_{\infty, 1}} < c_0$, there exists a unique global in time solution $\big(\rho,u,\nabla\Pi\big)$
to system \eqref{eq:dd-E}, related to the initial datum $\big(\rho_0,u_0\big)$ and satisfying the regularity properties (i)-(ii)-(iii) of Theorem \ref{th:g=1_d},
together with the decay estimate \eqref{decay:est:1}.
\end{cor}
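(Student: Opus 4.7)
The plan is to deduce this corollary as a direct consequence of Theorem \ref{th:g=1_d}. The key observation is that the left-hand side of the smallness condition \eqref{eq:smallness} factorises as the product of $\frac{1}{\alpha}\|u_0\|_{B^1_{\infty,1}}$ and an exponential term which, under the hypotheses of the corollary, depends only on quantities that are prescribed; smallness of $\|u_0\|_{B^1_{\infty,1}}$ alone is therefore enough to enforce the condition.

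More precisely, I would introduce
$$
M \,:=\, \exp\left(\left(1\,+\,\left\|\rho_0\,-\,1\right\|^\eta_{B^1_{\infty,1}}\right)\, e^{K}\,\left(\frac{1}{\alpha}\,\| u_0 \|_{L^2} \,+\,1 \right)\right),
$$
where $K$ and $\eta$ are the universal constants provided by Theorem \ref{th:g=1_d}. Since $\alpha$, $\rho_*$, $\rho^*$, $d$, $\|\rho_0-1\|_{B^1_{\infty,1}}$ and $\|u_0\|_{L^2}$ are all fixed, the quantity $M$ is a finite constant depending only on those parameters. Setting $c_0 \,:=\, 2\alpha/M$, any initial datum with $\|u_0\|_{B^1_{\infty,1}} < c_0$ automatically fulfils \eqref{eq:smallness}.

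One then applies Theorem \ref{th:g=1_d} to obtain the global in time solution $(\rho, u, \nabla\Pi)$ associated with $(\rho_0,u_0)$, together with the regularity properties (i)-(ii)-(iii) and the exponential decay estimate \eqref{decay:est:1}, thus concluding the proof.

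Since the statement is essentially a rephrasing of Theorem \ref{th:g=1_d} isolating the role of $\|u_0\|_{B^1_{\infty,1}}$ in the smallness condition, no genuine obstacle is encountered. The only point worth emphasising is that the prefactor $\alpha^{-1}\|u_0\|_{B^1_{\infty,1}}$ enters \eqref{eq:smallness} linearly, \emph{outside} of the exponential factor: this is precisely what allows us to trade the smallness of $\alpha^{-1}$ (as exploited in Corollary \ref{c:small-alpha}) for smallness of the $B^1_{\infty,1}$ norm of the velocity field, at fixed damping coefficient, fixed kinetic energy and fixed endpoint norm of the initial density variation.
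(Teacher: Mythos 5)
Your proposal is correct and coincides with the paper's (implicit) argument: the paper states this corollary as an immediate consequence of Theorem \ref{th:g=1_d}, precisely because the exponential factor in \eqref{eq:smallness} depends only on the quantities fixed in the statement, so one may take $c_0 = 2\alpha/M$ exactly as you do.
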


\subsection{The case ${\g=0}$ for ${d\geq2}$} \label{ss:g=0_d}
In this subsection, we still focus on the case of general dimension $d\geq2$, but for the choice $\g=0$ in the damping coefficient.
We immediately notice that this case is much more involved than the previous one $\g=1$. As a matter of fact, a linear term survives
in the analysis of the pressure gradient, which then is no more expected to be quadratic in $u$, thus to possess a better decay (compare with the decay
property \eqref{decay:est:1} above).

Nonetheless, by using positivity properties of the damping term and careful decay estimates,
one can prove a global well-posedness result also in this case, provided some smallness conditions, more involved than \eqref{eq:smallness},
are satisfied. The precise statement reads as follows.

 \begin{thm} \label{th:g=0_d}
Let $d\geq 2$ and fix $\g=0$ in system \eqref{eq:dd-E}. Let us fix indices $(s,p,r)\in \R\times[1,+\infty]\times[1,+\infty]$ such that $p\geq2$ and condition
\eqref{cond:Lipschitz} is satisfied. Consider two constants $0<\rho_*<\rho^*$.

There exists a constant $K>0$, only depending on $\rho_*$ and $\rho^*$ and on the space dimension $d\geq2$,
and an exponent $\eta=\eta(d)$ only depending on $d$, such that the following property holds true.

Take any initial datum $\big(\rho_0,u_0\big)$ verifying \eqref{eq:in-datum}-\eqref{eq:vacuum} with exactly the previous constants $\rho_*$ and $\rho^*$,
such that $u_0\in L^2\cap B^s_{p,r}$ and $\nabla\rho_0\in B^{s-1}_{p,r}$. Take $\alpha>0$ in equations \eqref{eq:dd-E}.
Define
\[
\mf R\,:=\,1+\left\|\rho_0-1\right\|^\eta_{B^1_{\infty,1}}
\]
and assume that
\[
\frac{\|u_0\|_{L^2\cap B^1_{\infty,1}}}{\alpha} \,K\,\mf R^3\, e^{K \mf R} \,  <\,4\,.
\]

Then, there exists a unique global in time solution $\big(\rho,u,\nabla\Pi\big)$ to system \eqref{eq:dd-E},
related to the initial datum $\big(\rho_0,u_0\big)$, such that the regularity properties (i)-(ii)-(iii) stated in Theorem \ref{th:g=1_d} still hold true.
Moreover, after defining $\theta_0\in\,]0,1[\,$ as the interpolation exponent 
such that $\| f \|_{B^{s-1}_{p,r}}\, \leq\, C\,\| f \|_{L^2}^{\theta_0}\,\| f \|^{1-\theta_0}_{B^{s}_{p,r}}$
and setting
\[
 \beta_0\, :=\, \frac{\theta_0}{1+\theta_0}\; \frac{\alpha}{\rho^*}\,,
\]
one has that the map
\begin{equation}
\label{decay:est:2}
t\,\mapsto\, e^{\beta_0 \,t} \, \left\|u(t)\right\|_{L^2\cap B^s_{p,r}} +  e^{\beta_0 \,t}\,\left\|\nabla\Pi(t)\right\|_{L^2\cap B^s_{\infty,r}} \qquad \mbox{ is bounded over }\ \R_+\,.
\end{equation}
\end{thm}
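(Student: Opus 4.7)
The plan is to follow the three-step scheme (local existence plus continuation criterion, then propagation of low-regularity norms, then \emph{a posteriori} control of higher-order norms) outlined in Subsection \ref{ss:overview}. Section \ref{s:local} provides a maximal solution $(\rho,u,\nabla\Pi)$ on $[0,T^*[$ with the claimed regularity, whose lifespan is limited only by the requirement $\int_0^{T^*}\|\nabla u(\tau)\|_{L^\infty}\,\dd\tau<+\infty$. Everything therefore reduces to showing that this integral is a priori bounded, independently of $T<T^*$, under the two smallness hypotheses of the statement.

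The first ingredient is the $L^2$ energy balance: testing the momentum equation against $u$ and exploiting the mass equation yields
\[
\frac{1}{2}\,\frac{d}{dt}\int\rho\,|u|^2\,\dd x\,+\,\alpha\int |u|^2\,\dd x\,=\,0\,,
\]
whence, thanks to \eqref{eq:vacuum}, the sharp decay $\|u(t)\|_{L^2}\leq (\rho^*/\rho_*)^{1/2}\,e^{-\alpha_* t}\,\|u_0\|_{L^2}$ with $\alpha_*:=\alpha/\rho^*$. Next, for the endpoint $B^1_{\infty,1}$ norm I would divide the momentum equation by $\rho$, apply $\Delta_j$, and perform an $L^p$-estimate (later sending $p\to+\infty$) on the localised transport--damping equation. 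The variable damping coefficient is handled by the splitting
\[
\frac{\alpha}{\rho}\,=\,\beta_0\,+\,\left(\frac{\alpha}{\rho}-\beta_0\right)\,,\qquad 0<\beta_0\leq\alpha_*\,,
\]
so that the nonnegative remainder can be dropped by positivity, while the constant $\beta_0$ produces the sought exponential factor. Commutator terms $[\Delta_j,u\cdot\nabla]u$, $[\Delta_j,1/\rho]\nabla\Pi$ and $[\Delta_j,\alpha/\rho]u$ are controlled by the paradifferential and composition tools of Section \ref{s:tools}, the functional dependence on $\rho$ being responsible for the appearance of the factor $\mf R=1+\|\rho_0-1\|^\eta_{B^1_{\infty,1}}$.

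The most delicate ingredient is the pressure bound. Taking the divergence of the momentum equation (divided by $\rho$) yields the elliptic problem
\[
-\,\div\!\left(\frac{1}{\rho}\,\nabla\Pi\right)\,=\,\div(u\cdot\nabla u)\,+\,\alpha\,\div\!\left(\frac{1}{\rho}\,u\right)\,.
\]
In contrast to the case $\g=1$, the second right-hand side term is only \emph{linear} in $u$, so $\nabla\Pi$ inherits the same exponential decay as $u$ rather than twice the rate, and its $L^2\cap B^1_{\infty,1}$ norm admits only a bound of the form $C(\|\rho-1\|_{B^1_{\infty,1}})\,\bigl(\|u\|_{L^2\cap B^1_{\infty,1}}^2 + \alpha\,\|u\|_{L^2\cap B^1_{\infty,1}}\bigr)$. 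This is precisely why the statement requires \emph{two} smallness conditions: one absorbing the linear pressure contribution (whence the factor $\|u_0\|_{L^2\cap B^1_{\infty,1}}/\alpha$) and one absorbing the quadratic one (whence $\|u_0\|^2_{L^2\cap B^1_{\infty,1}}/\alpha$). Plugging the pressure bound into the endpoint transport--damping estimate and applying Gronwall to $t\mapsto e^{\beta_0 t}\|u(t)\|_{L^2\cap B^1_{\infty,1}}$, the two assumptions ensure that the resulting exponential $\exp(K\,\mf R\,\|u_0\|_{L^2\cap B^1_{\infty,1}}/\alpha)$ remains below a fixed constant. This closes the a priori bound, and the continuation criterion gives $T^*=+\infty$.

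Once global existence is secured, I would propagate the full $B^s_{p,r}$ norm by running the same transport--damping estimate at level $s$. No further smallness is needed because the lower-order norms already decay exponentially; however, the linear pressure term now produces $\|u\|_{B^{s-1}_{p,r}}$, which one rewrites through the interpolation inequality $\|u\|_{B^{s-1}_{p,r}}\leq C\,\|u\|_{L^2}^{\theta_0}\,\|u\|_{B^s_{p,r}}^{1-\theta_0}$ and balances against the sharp $L^2$ decay to obtain the specific rate $\beta_0=\theta_0\,\alpha_*/(1+\theta_0)$ announced in \eqref{decay:est:2}. The main obstacle of the whole proof is precisely this linear pressure contribution: it breaks the ``quadratic pressure'' argument available for $\g=1$, forces the two-level smallness hypothesis, and demands a systematic use of fine commutator and interpolation inequalities linking the $L^2$, $B^1_{\infty,1}$ and $B^s_{p,r}$ norms of $u$ in order to convert every remainder into a term admitting sharp exponential decay.
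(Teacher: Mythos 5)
Your proposal follows essentially the same route as the paper's proof: the splitting $\alpha/\rho=\beta+(\alpha/\rho-\beta)$ with the nonnegative remainder dropped by positivity in the localised $L^q\to L^\infty$ estimates, the commutator bounds combined with interpolation against the sharp $L^2$ decay, the identification of the linear pressure contribution $\alpha\,\div(\rho^{-1}u)$ as the reason for the two separate smallness conditions, and the a posteriori $B^s_{p,r}$ decay at rate $\beta_0=\theta_0\alpha_*/(1+\theta_0)$ via the interpolation exponent $\theta_0$. The only (harmless) deviation is that you use the rate $\beta_0$ already at the endpoint $B^1_{\infty,1}$ level, where the paper works with the slightly larger $\beta_*$ tuned to the $L^2$--$B^1_{\infty,1}$ interpolation exponent, and closes the quadratic Gr\"onwall estimate through an explicit bootstrap time $T_2$; both choices lead to the same conclusion.
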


Of course, the same comments formulated after Theorem \ref{th:g=1_d}, together with Remark \ref{r:smallness} and Corollary \ref{c:small-alpha},
apply also to the previous theorem, obtained in the case $\g=0$. However, for the sake of conciseness, we avoid to formulate them explicitly also here.

\subsection{The case $d=2$ and ${\g=1}$} \label{ss:g=1_2}

Now, we focus on the special case of planar flows, namely $d=2$.
Observe that, for any $\alpha\geq0$, in the limit  $\rho\to1$ equations \eqref{eq:dd-E} formally converge to the classical (homogeneous)
incompressible Euler equations (with relaxation term in the case $\alpha>0$), which are globally well-posed when $d=2$.
Then, it is natural to wonder if, in this case, Theorem \ref{th:g=1_d} can be improved by imposing a smallness condition on the size of
the non-homgeneity $\rho_0-1$ only.

In this section, we give a positive answer to the previous question, but only in the case $\g=1$. The key ingredient is the use of improved transport estimates
\cite{Vis, HK} in Besov spaces of regularity index $s=0$ to control the growth of the vorticity, together with the absence of the stretching term
in its transport equation.
In the case $\g=0$, the problem is that
it is not clear how to reproduce the fundamental cancellation mentioned at the beginning of Subsection \ref{ss:g=0_d} when estimating
the norm of the vorticity in the \emph{critical} space $B^0_{\infty,1}$.

For $\g=1$, the precise result is contained in the following statement.

\begin{thm} \label{th:g=1_2}
Let $d=2$ and fix $\g=1$ in system \eqref{eq:dd-E}. Let us fix indices $(s,p,r)\in \R\times[1,+\infty]\times[1,+\infty]$ such that $p\geq2$ and condition
\eqref{cond:Lipschitz} is satisfied. Fix two constants $0<\rho_*<\rho^*$.

There exists a constant $K>0$, only depending on $\rho_*$ and $\rho^*$, such that the following property holds true.

Take any initial datum $\big(\rho_0,u_0\big)$ verifying \eqref{eq:in-datum}-\eqref{eq:vacuum} with exactly the given constants $0<\rho_*<\rho^*$,
such that $u_0\in L^2\cap B^s_{p,r}$ and $\nabla\rho_0\in B^{s-1}_{p,r}$. In addition, define the function $\Phi_K$ by
\[
\forall\,z\geq0\,,\qquad\qquad
\Phi_K(z)\,:=\,K \,e^{K\, z}\,\left(e^{K\left(\exp\big(K \, z \big)-1\right)} -1 \right)
\]
and assume that
\begin{equation} \label{eq:small_d=2}
\left\|\rho_0\,-\,1\right\|_{B^1_{\infty,1}}\,\left(1\,+\,\left\|\rho_0\,-\,1\right\|^\eta_{B^1_{\infty,1}}\right)\,
\Phi_K\left(\frac{\left\|u_0\right\|_{L^2\cap B^1_{\infty,1}}}{\alpha}\right)
\,<\,4\,,
\end{equation}
where the exponent $\eta>5$ can be chosen\footnote{The exponent $\eta$ comes from an interpolation argument, see inequality \eqref{est:Pi-B_g=1} below.
It only depends on the space dimension, which is fixed here $d=2$.} as close to $5$ as one wants.

Then, there exists a unique global in time solution $\big(\rho,u,\nabla\Pi\big)$ to system \eqref{eq:dd-E},
related to the initial datum $\big(\rho_0,u_0\big)$
and satisfying the regularity properties (i)-(ii)-(iii) stated in Theorem \ref{th:g=1_d}, together with the decay estimate \eqref{decay:est:1} therein.
\end{thm}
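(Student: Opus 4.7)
The plan follows the same three-step pattern as for Theorem \ref{th:g=1_d}: given the local existence and uniqueness provided by Section \ref{s:local}, together with the Beale--Kato--Majda type continuation criterion proved there, everything reduces to showing that $V(t):=\int_0^t\|\nabla u(\tau)\|_{L^\infty}\,d\tau$ remains finite for all $t\geq 0$, and in fact admits a finite limit as $t\to+\infty$, under the smallness condition \eqref{eq:small_d=2}. The novelty in dimension $d=2$ is to exploit the scalar nature of the vorticity and the absence of a stretching term, in combination with the improved endpoint transport estimates of Vishik and Hmidi--Keraani, so as to replace the smallness assumption \eqref{eq:smallness} (which involves the full norm of $u_0$) by one involving only the size of $\rho_0-1$.

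The standard energy identity $\frac{d}{dt}\int\rho|u|^2\,dx+2\alpha\int\rho|u|^2\,dx=0$ gives the sharp $L^2$ decay $\|u(t)\|_{L^2}\leq\sqrt{\rho^*/\rho_*}\,e^{-\alpha t}\|u_0\|_{L^2}$. Since $\g=1$, dividing the momentum equation by $\rho$ and taking divergence yields the elliptic relation $\div(\rho^{-1}\nabla\Pi)=-\partial_iu_j\,\partial_ju_i$, so that $\nabla\Pi$ depends quadratically on $u$, with a prefactor depending polynomially on $\|\rho-1\|_{B^1_{\infty,1}}$; this quadraticity is the source of the crucial \emph{additional} exponential decay of $\nabla\Pi$. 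In $d=2$, the scalar vorticity $\omega=\partial_1u_2-\partial_2u_1$ satisfies the damped transport equation
\[
\d_t\omega\,+\,u\cdot\nabla\omega\,+\,\alpha\,\omega\,=\,\frac{\nabla\rho\wedge\nabla\Pi}{\rho^2}\,=:\,F,
\]
with no stretching term. Setting $\wtilde{\omega}:=e^{\alpha t}\omega$, the function $\wtilde\omega$ is transported by the divergence-free velocity $u$ with forcing $e^{\alpha t}F$, so Vishik's estimate in the endpoint space $B^0_{\infty,1}$ produces
\[
\bigl\|\wtilde\omega(t)\bigr\|_{B^0_{\infty,1}}\,\leq\,C\Bigl(\|\omega_0\|_{B^0_{\infty,1}}\,+\,\int_0^t e^{\alpha\tau}\|F(\tau)\|_{B^0_{\infty,1}}\,d\tau\Bigr)\,\bigl(1+V(t)\bigr),
\]
a \emph{linear} dependence on $V(t)$ which is the key improvement over the classical Gronwall-type transport estimate. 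Biot--Savart in 2D together with boundedness of Riesz transforms on $B^0_{\infty,1}$ gives $\|\nabla u\|_{L^\infty}\lesssim\|\omega\|_{B^0_{\infty,1}}+\|u\|_{L^2}$, and the transport equation for $\rho$ yields $\|\rho(t)-1\|_{B^1_{\infty,1}}\leq\|\rho_0-1\|_{B^1_{\infty,1}}\,e^{CV(t)}$.

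It remains to combine these bounds into a closed inequality for $V(t)$. Standard paraproduct estimates give $\|F\|_{B^0_{\infty,1}}\lesssim\|\rho-1\|_{B^1_{\infty,1}}\|\nabla\Pi\|_{B^0_{\infty,1}}$, and the quadratic structure of the pressure, combined with the $L^2$ and $B^0_{\infty,1}$ decay of $u$, makes the map $\tau\mapsto e^{\alpha\tau}\|F(\tau)\|_{B^0_{\infty,1}}$ integrable on $\R_+$, with upper bound proportional to $\|\rho_0-1\|_{B^1_{\infty,1}}\,e^{CV(t)}\,\|u_0\|_{L^2\cap B^1_{\infty,1}}^2$. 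Feeding the resulting control of $\|\omega\|_{B^0_{\infty,1}}$ back into $\|\nabla u\|_{L^\infty}$ and integrating in time yields a scalar inequality for $V(t)$ whose closing fixed-point condition is precisely \eqref{eq:small_d=2}, with the double exponential $\Phi_K$ reflecting the two sources of exponential coupling (the density transport estimate and Vishik's estimate). The hard part is exactly this bootstrap: each ingredient is standard in isolation, but one must use sharp paraproduct and commutator estimates to make $\|\rho_0-1\|_{B^1_{\infty,1}}$ appear at the right places, use the quadraticity of the pressure to compensate the factor $e^{\alpha\tau}$ introduced by the rescaling, and keep track of the fact that the growth of $\|\rho(t)-1\|_{B^1_{\infty,1}}$ itself depends on the very quantity $V(t)$ we are trying to bound. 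Once $V(\infty)<\infty$ is secured, propagation of the high regularity norms $B^s_{p,r}$ and the decay estimate \eqref{decay:est:1} follow by adapting the arguments of the proof of Theorem \ref{th:g=1_d}.
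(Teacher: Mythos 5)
Your proposal follows essentially the same route as the paper's proof: rescale by $e^{\alpha t}$, use the sharp $L^2$ decay, exploit the stretching-free $2$-D vorticity equation together with Vishik's linear $B^0_{\infty,1}$ transport estimate, bound the forcing via paraproducts so that $\left\|\rho_0-1\right\|_{B^1_{\infty,1}}$ factors out and the pressure is quadratic in the rescaled velocity, and close with a bootstrap. The only point you leave implicit is the precise form of that bootstrap: a naive continuity argument on $\int_0^te^{-\alpha\tau}\|\wu\|_{B^1_{\infty,1}}\,\dd\tau$ would reintroduce a smallness requirement on $\|u_0\|_{L^2\cap B^1_{\infty,1}}/\alpha$, whereas the paper runs the continuity argument on the cumulative forcing via the condition $R_0\int_0^te^{C_0\int_0^\tau\cdots}e^{-\alpha\tau}\big(\wtilde U\big)^2\,\dd\tau\leq 4\,\wtilde U_0$ and extracts the pointwise bound on $\wtilde U$ from a Gr\"onwall ODE for $f(t)=1+\int_0^te^{-\alpha\tau}\wtilde U\,\dd\tau$, which is exactly what produces the double exponential $\Phi_K$ and a smallness condition bearing on $\rho_0-1$ only.
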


Once again, we remark that there is no specific reason to take the critical value $4$ in the inequality defining the smallness condition of the previous statement.
Next, we formulate a remark, similar in spirit to Remark \ref{r:smallness}.

\begin{rmk} \label{r:smallness_d=2}
%
Also in this case, only the low regularity norms $\|\rho_0-1\|_{B^1_{\infty,1}}$ and $\|u_0\|_{L^2\cap B^1_{\infty,1}}$ are involved in the smallness condition.
\end{rmk}

The smallness condition \eqref{eq:small_d=2} has to be interpreted as a smallness condition over the non-homogeneity
$\rho_0-1$: even in the case when the velocity field is large and the damping coefficient $\alpha>0$ is small,
if the non-homogeneity is small enough, then the solution is globally defined.
More precisely, as an immediate consequence of Theorem \ref{th:g=1_2}, we obtain the next result, in the same vein of Corollary \ref{c:small-alpha} above.

\begin{cor} \label{c:small-alpha_d=2}
Let $d= 2$ and fix $\g=1$ and some $\alpha>0$ in system \eqref{eq:dd-E}. Take indices $(s,p,r)\in \R\times[1,+\infty]\times[1,+\infty]$ such that $p\geq2$ and condition
\eqref{cond:Lipschitz} is satisfied. Consider two constants $0<\rho_*<\rho^*$.

For any initial velocity field $u_0\in L^2\cap B^s_{p,r}$, with $\div u_0=0$, there exists a positive constant
$C_{0}\,=\,C_{0}\left(\rho_*,\rho^*,\alpha,\left\|u_0\right\|_{L^2\cap B^1_{\infty,1}}\right)>0$,
depending only on the quantities in the brackets, such that the following fact holds true.

For any initial density $\rho_0$ verifying condition \eqref{eq:vacuum} and  $\nabla\rho_0\in B^{s-1}_{p,r}$, and such that
\[
\left\|\rho_0-1\right\|_{B^1_{\infty,1}}\,\leq\, C_{0}\,,
\]
there exists a unique global in time solution $\big(\rho,u,\nabla\Pi\big)$
to system \eqref{eq:dd-E}, related to the initial datum $\big(\rho_0,u_0\big)$ and satisfying the regularity properties (i)-(ii)-(iii) stated in Theorem \ref{th:g=1_d},
together with the decay property \eqref{decay:est:1}.
\end{cor}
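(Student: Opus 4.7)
The plan is to derive Corollary \ref{c:small-alpha_d=2} directly from Theorem \ref{th:g=1_2}, by showing that once $u_0$ and $\alpha$ are fixed, the smallness condition \eqref{eq:small_d=2} degenerates into a pure smallness condition on $\|\rho_0-1\|_{B^1_{\infty,1}}$.

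First, I would fix once and for all the data $\rho_*,\rho^*,\alpha$ and the initial velocity field $u_0 \in L^2\cap B^s_{p,r}$ with $\div u_0 = 0$. By the embedding $B^s_{p,r}\hookrightarrow B^1_{\infty,1}$ following from \eqref{cond:Lipschitz} and the Biot--Savart/Bernstein-type continuity, the quantity
\[
M\,:=\,\bigl\|u_0\bigr\|_{L^2\cap B^1_{\infty,1}}\,\Phi_K\!\left(\bigl\|u_0\bigr\|_{L^2\cap B^1_{\infty,1}}\right)
\]
is a finite positive constant, depending only on $\rho_*,\rho^*,\alpha$ and $\|u_0\|_{L^2\cap B^1_{\infty,1}}$ (recall that $\Phi_K$ is defined in terms of $K=K(\rho_*,\rho^*)$ and $\alpha$). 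Setting $z := \|\rho_0-1\|_{B^1_{\infty,1}}$, the condition \eqref{eq:small_d=2} rewrites as $z\,(1+z^{\eta})\,M < 4$, which involves $\rho_0$ only through $z$.

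Next, I would invoke elementary properties of the real function $\psi(z) := z\,(1+z^{\eta})$. Since $\eta>5$, this $\psi$ is continuous on $[0,+\infty)$, strictly increasing, with $\psi(0)=0$ and $\psi(z)\to+\infty$ as $z\to+\infty$. Consequently there exists a unique $C_0 = C_0(\rho_*,\rho^*,\alpha,\|u_0\|_{L^2\cap B^1_{\infty,1}})>0$ characterised by $\psi(C_0)\,M = 2$ (say), so that for every $\rho_0$ satisfying $\|\rho_0-1\|_{B^1_{\infty,1}}\leq C_0$, the smallness assumption \eqref{eq:small_d=2} is met with strict inequality.

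Finally, given such a $\rho_0$ with $\nabla\rho_0\in B^{s-1}_{p,r}$ and verifying \eqref{eq:vacuum} with the prescribed constants $\rho_*,\rho^*$, the hypotheses of Theorem \ref{th:g=1_2} are all satisfied; applying that theorem yields the unique global solution $(\rho,u,\nabla\Pi)$ with the regularity (i)--(iii) of Theorem \ref{th:g=1_d} and the decay estimate \eqref{decay:est:1}. There is essentially no obstacle here: the proof is purely a reorganisation of the assumption \eqref{eq:small_d=2}, the point being that the nonlinear function $\Phi_K$ depends on $u_0$ and $\alpha$ only, never on $\rho_0-1$. The mild subtlety I would flag is to make sure that the constant $C_0$ genuinely depends on the quantities listed in the statement (and not, for instance, on the higher-order norms $\|u_0\|_{B^s_{p,r}}$ or $\|\nabla\rho_0\|_{B^{s-1}_{p,r}}$): this is immediate since $M$, and therefore $C_0$, is built from $\|u_0\|_{L^2\cap B^1_{\infty,1}}$, $\alpha$ and $K=K(\rho_*,\rho^*)$ alone.
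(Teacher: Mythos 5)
Your proposal is correct and follows exactly the route the paper intends: the corollary is presented there as an immediate consequence of Theorem \ref{th:g=1_2}, obtained precisely by freezing $\alpha$ and $\left\|u_0\right\|_{L^2\cap B^1_{\infty,1}}$ in the smallness condition \eqref{eq:small_d=2} and using the monotonicity of $z\mapsto z\,(1+z^{\eta})$ to extract the threshold $C_0$. Your remark that $C_0$ depends only on the low-regularity norm of $u_0$ (through $\Phi_K$ and $K=K(\rho_*,\rho^*)$) is exactly the point the authors emphasise.
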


We conclude this part with a remark on condition \eqref{eq:small_d=2}.
\begin{rmk} \label{r:small-second-rem}
Despite \eqref{eq:small_d=2} has to be read as a smallness condition resting on the size (in $B^1_{\infty,1}$) of the non-homogeneity $\rho_0-1$,
we observe that Theorem \ref{th:g=1_2} remains coherent with the claim of Theorem \ref{th:g=1_d}. Indeed,
condition \eqref{eq:small_d=2} implies the global in time well-posedness for possibly large $\rho_0-1$, provided the quantity
$\left\|u_0\right\|_{L^2\cap B^1_{\infty,1}}/\alpha$ is small enough:
this is an immediate consequence of the fact that $\lim_{z\to0^+}\Phi_K(z)\,=\,0$.
\end{rmk}

\subsection{A few comments on the main results} \label{ss:comments}

The rest of the work is devoted to the proof of the previous statements. 
Before continuing, let us make a few comments on the statements presented above.

\bigbreak
\paragraph*{Scaling.} We observe that equations \eqref{eq:dd-E} are invariant under a $2$-parameter family of scalings.
More precisely, for $\g\in\{0,1\}$, assume to dispose of a solution $\big(\rho,u,\nabla\Pi\big)$ to \eqref{eq:dd-E} with $\alpha=1$.
For $\alpha>0$, $R>0$ and $\ell>0$, define the rescaled solution $\big(\rho_\resc, u_\resc, \nabla\Pi_\resc\big)$ as
\begin{equation} \label{eq:scaling}
\left\{
\begin{array}{l}
\rho_\resc(t,x)\,=\,R\,\rho(R^{\g-1}\alpha t, \ell x)\,, \\[2ex]
u_\resc(t,x)\,=\,\dfrac{R^{\g-1}\,\alpha}{\ell}\,u(R^{\g-1}\alpha t, \ell x)\,, \\[2ex]
\Pi_\resc(t,x)\,=\,\dfrac{R^{2\g-1}\,\alpha^2}{\ell^2}\,\Pi(R^{\g-1}\alpha t, \ell x)\,.
\end{array}
\right.
\end{equation}
Then, for a fixed $\alpha>0$, the rescaled solution $\big(\rho_\resc, u_\resc, \nabla\Pi_\resc\big)$
solves, for any choice of parameters $R>0$ and $\ell>0$, system \eqref{eq:dd-E} with the same value of $\g$ and with the given $\alpha>0$.

Notice that the $L^1_T(L^\infty)$ norm of $\nabla u$ (coming into play, for instance, in the continuation criterion of Lemma \ref{lem:cont:pri})
is invariant for such family of scalings. We now discuss the invariance of the smallness conditions of Theorems \ref{th:g=1_d}, \ref{th:g=0_d} and \ref{th:g=1_2}
under a suitable choice of the scaling parameters.

\paragraph*{Relation between the smallness conditions and the scaling.}
In \eqref{eq:scaling}, the presence of the factor $R>0$ in front of the density looks quite unnatural, as its value drastically modifies the size of the non-homogeneity.
At the same time, in our study we need to work with non-homogeneous Besov spaces, which are not well-behaved with respect to a rescaling of the space variable
because of the presence of the low-frequency terms.

The above considerations motivate us to fix the choice $R=\ell=1$ in \eqref{eq:scaling}. Therefore, given a solution $\big(\rho,u,\nabla\Pi\big)$ as above and fixed
$\alpha>0$,
we define the rescaled solution $\big(\rho_\alpha,u_\alpha,\nabla\Pi_\alpha\big)$ as
\begin{equation*} 
\left\{
\begin{array}{l}
\rho_\alpha(t,x)\,=\,\rho(\alpha t, x)\,, \\[1ex]
u_\alpha(t,x)\,:=\,\alpha\,u(\alpha t, x)\,, \\[1ex]
\Pi_\alpha(t,x)\,=\,\alpha^2\,\Pi(\alpha t, x)\,,
\end{array}
\right.
\end{equation*}
which then solves \eqref{eq:dd-E} with damping coefficient equal to $\alpha$.

This having been pointed out, we notice that the smallness conditions formulated in Theorems \ref{th:g=1_d}, \ref{th:g=0_d} and \ref{th:g=1_2}
are invariant with respect to the above scaling. Besides, this indicates that our smallness conditions are somehow sharp.

\medbreak
Interestingly, we observe that, if we were to replace the non-homogeneous Besov norms
appearing in Theorems \ref{th:g=1_d}, \ref{th:g=0_d} and \ref{th:g=1_2} with the corresponding homogeneous ones
(and if we were to forget about the $L^2$ norm of $u_0$ appearing in the smallness conditions), we could allow for more freedom in the chosen scaling.

More precisely, the restriction $\ell=1$ would not be needed in Theorems \ref{th:g=1_d} and \ref{th:g=0_d}, as the smallness conditions there
would be invariant under the scaling \eqref{eq:scaling} with $R=1$ (actually, for any $R>0$ in the case of Theorem \ref{th:g=1_d}).
In addition, in the situation of Theorem \ref{th:g=1_2} (hence, in particular, $\g=1$) one could take any $\ell>0$ and $R=\ell^{-1}$ in \eqref{eq:scaling}
and see that the smallness condition of that statement would remain invariant for that choice of the scaling.

\paragraph*{Vorticity.}
Let us mention that, although the vorticity of the fluid does not appear in the formulation of the main results,
this quantity plays an important role in our analysis. Therefore, let us spend some lines to introduce it.

Given an incompressible velocity field $u$, we define the vorticity matrix  $ \Omega $ of $u$ as twice the skew-symmetric part of its Jacobian matrix,
namely $\Omega\,:=\,Du\,-\,\nabla u$,
where $Du$ is the Jacobian matrix of $u$ and $\nabla u\,=\,^tDu$ is its transpose matrix.
We adopt the convention of identifying $\Omega$ with the vector field $\o\,:=\,\nabla\times u$ when $d=3$ and, when $d=2$,
with the scalar field $\o\,:=\,\d_1u^2\,-\,\d_2u^1$.

\medbreak
This having been recalled, we can move on and collect, in the next section, some preliminary material which is needed in our study.


\section{Toolbox} \label{s:tools}

The present section is intended to be a functional toolbox, which we stock with all the material which is needed in the course of the proof of the main results.
We start by recalling some basic facts of Littlewood-Paley theory in $\R^d$, with $d\geq1$,
the definition of non-homogeneous Besov spaces and some interpolation inequalities of Gagliardo-Nirenberg type. Then, in Subsection \ref{ss:para}
we introduce some elements of paradifferential calculus, namely paraproduct decomposition and estimates for composition of Besov functions.
Finally, in Subsection \ref{ss:tools-est} we recall some well-known estimates for smooth solutions to transport and elliptic equations.

\subsection{Littlewood-Paley theory and Besov spaces} \label{ss:LP-Sobolev}

We recall here the main ideas of Littlewood-Paley theory in $\R^d$. These are classical results, for which,
if not otherwise specified, we refer to Chapter 2 of \cite{BCD} for details.

To begin with,
we fix a smooth radial function $\chi$ supported in the ball $B(0,2)$, equal to $1$ in a neighborhood of $B(0,1)$
and such that $r\mapsto\chi(r\,e)$ is nonincreasing over $\R_+$ for all unitary vectors $e\in\R^d$. Set
$\varphi\left(\xi\right)=\chi\left(\xi\right)-\chi\left(2\xi\right)$ and
$\vphi_j(\xi):=\vphi(2^{-j}\xi)$ for all $j\geq0$.
The dyadic blocks $(\Delta_j)_{j\in\Z}$ are defined by\footnote{Throughout, we agree  that  $f(D)$ stands for 
the pseudo-differential operator $u\mapsto\mc{F}^{-1}[f(\xi)\,\what u(\xi)]$.} 
$$
\Delta_j\,:=\,0\quad\mbox{ if }\; j\leq-2,\qquad\Delta_{-1}\,:=\,\chi(D)\qquad\mbox{ and }\qquad
\Delta_j\,:=\,\varphi(2^{-j}D)\quad \mbox{ if }\;  j\geq0\,.
$$
We  also introduce the following low frequency cut-off operator:
\begin{equation} \label{eq:S_j}
S_ju\,:=\,\chi(2^{-j}D)\,=\,\sum_{k\leq j-1}\Delta_{k}\qquad\mbox{ for }\qquad j\geq0\,.
\end{equation}
Note that $S_j$ and $\Delta_j$ are convolution operators by $L^1$ kernels, whose norms are independent of the index $j$. Thus, they act continuously
from $L^p$ into itself, for any $p\in[1,+\infty]$.

Remark that the function $\chi$ can be chosen so that, for all $\xi\in\R^d$, one has the equality $\chi(\xi)+\sum_{j\geq0}\vphi_j(\xi)=1$.
Based on this partition of unity on the Fourier space, we obtain the so-called non-homogeneous ``Littlewood-Paley decomposition'' of tempered distributions:
\[
\forall\,u\,\in\,\mc S'\,,\qquad\qquad\qquad u\,=\,\sum_{j\geq-1}\Delta_ju\qquad \mbox{ in the sense of }\quad \mc S'\,.
\]

Let us also mention the so-called \emph{Bernstein inequalities}, which explain the way derivatives act on spectrally localised functions.
They play a key role in the whole theory.

\begin{lemma} \label{l:bern}
Let  $0<r<R$.   A constant $C>0$ exists so that, for any integer $k\geq0$, any couple $(p,q)$ 
in $[1,+\infty]^2$, with  $p\leq q$,  and any function $u\in L^p$, for all $\lambda>0$ we have:
\begin{align*}
{\supp}\, \widehat u\, \subset\,   B(0,\lambda R)\,=\,\big\{\xi\in\R^d\,\big|\,|\xi|\leq\lambda R \big\}\qquad
\Longrightarrow\qquad
\|\nabla^k u\|_{L^q}\, \leq\,
 C^{k+1}\,\lambda^{k+d\left(\frac{1}{p}-\frac{1}{q}\right)}\,\|u\|_{L^p} \\[1ex]
{\supp}\, \widehat u   \, \subset\, \big\{\xi\in\R^d\,\big|\, r\lambda\leq|\xi|\leq R\lambda\big\}
\quad\Longrightarrow\quad C^{-k-1}\,\lambda^k\|u\|_{L^p}\,\leq\,
\|\nabla^k u\|_{L^p}\,
\leq\,C^{k+1} \, \lambda^k\|u\|_{L^p}\,.
\end{align*}
\end{lemma}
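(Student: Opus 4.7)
The plan is to deduce all three estimates by representing $u$ as a convolution against a dilated Schwartz kernel, and then invoking Young's convolution inequality; the dilation parameter $\lambda$ will enter purely through the elementary identity $\|g(\lambda\cdot)\|_{L^r} = \lambda^{-d/r}\|g\|_{L^r}$. For the first bound, I would pick once and for all a cutoff $\psi \in \mc C_c^\infty(\R^d)$ with $\psi \equiv 1$ on $B(0,R)$ and set $\kappa := \mc F^{-1}\psi$, so that the spectral support assumption gives $u = \kappa_\lambda \ast u$ with $\kappa_\lambda(x) := \lambda^d\kappa(\lambda x)$. Differentiating $k$ times yields $\nabla^k u = \lambda^{k+d}(\nabla^k\kappa)(\lambda\cdot)\ast u$, and Young's inequality with the Hölder conjugation $1 + 1/q = 1/p + 1/r$ produces
\[
\|\nabla^k u\|_{L^q} \;\leq\; \lambda^{k + d(1/p - 1/q)}\,\|\nabla^k \kappa\|_{L^r}\,\|u\|_{L^p}\,.
\]
To upgrade this to the form $C^{k+1}\lambda^{k + d(1/p - 1/q)}\|u\|_{L^p}$, I would control $\|\nabla^k\kappa\|_{L^r}$ via the pointwise weighted identity $(1+|x|^2)^N |\nabla^k\kappa(x)| = \bigl|\mc F^{-1}\bigl[(1-\Delta)^N\{(i\xi)^k\psi(\xi)\}\bigr](x)\bigr|$; Leibniz's rule and the compact support of $\psi$ then yield $|\nabla^k\kappa(x)| \leq C_N\, k^{2N} R^k (1+|x|^2)^{-N}$ for all $N\in\N$, and for $N$ large enough the polynomial factor $k^{2N}$ is absorbed into a slightly larger geometric base.

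The second display is handled in two steps. The upper bound is the first assertion specialised to $q = p$, up to trivially enlarging $R$ so that the same $\psi$ still equals $1$ on the annulus. For the more delicate lower bound, I would start from the multinomial identity $|\xi|^{2k} = \sum_{|\alpha|=k} \tfrac{k!}{\alpha!}\,\xi^{2\alpha}$, rewritten as
\[
1 \;=\; \sum_{|\alpha|=k}\frac{k!}{\alpha!}\,\frac{\xi^{2\alpha}}{|\xi|^{2k}} \qquad \text{for } \xi \neq 0\,.
\]
Multiplying $\widehat u$ by this identity and inserting a cutoff $\varphi$ which vanishes in a neighbourhood of the origin and is identically $1$ on the annulus $\{r \leq |\xi/\lambda| \leq R\}$ hosting $\supp\widehat u$, I obtain the reconstruction
\[
u \;=\; \sum_{|\alpha|=k} c_{\alpha,k}\,\lambda^{-k}\,\widetilde K_\alpha(\lambda\cdot)\ast \partial^\alpha u\,,\qquad \widetilde K_\alpha \,:=\, \mc F^{-1}\!\bigl[\xi^\alpha\,\varphi(\xi)/|\xi|^{2k}\bigr]\,.
\]
Crucially, the vanishing of $\varphi$ near the origin cures the singularity of $1/|\xi|^{2k}$, so $\widetilde K_\alpha$ is a genuine Schwartz function, and a last application of Young's inequality (with the same weighted-pointwise estimate as above) yields $\|u\|_{L^p} \leq C^{k+1}\lambda^{-k}\|\nabla^k u\|_{L^p}$, as required.

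The main technical obstacle I anticipate is maintaining the uniformity of the constant in $k$: naive estimates on the kernels in either step produce factorial factors $k!$, which would spoil the exponential form $C^{k+1}$. Avoiding this requires the weighted pointwise argument sketched above for $\nabla^k\kappa$ and an entirely analogous one for $\widetilde K_\alpha$, where the only $k$-dependence is a geometric $R^k$ inherited from the compact support of $\psi$ or $\varphi$; any polynomial prefactor in $k$ is then absorbed into any base strictly larger than $R$, producing the advertised constant $C = C(r, R, d)$.
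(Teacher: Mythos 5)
Your proof is correct and is essentially the classical argument that the paper relies on by citing Chapter 2 of \cite{BCD}: reduction to convolution with dilated Schwartz kernels plus Young's inequality for the direct bounds, and for the reverse bound the reconstruction of $u$ from $\nabla^k u$ via the multinomial identity $|\xi|^{2k}=\sum_{|\alpha|=k}\tfrac{k!}{\alpha!}\xi^{2\alpha}$ together with a cutoff vanishing near the origin. Your attention to the uniformity in $k$ (weighted pointwise kernel estimates absorbing the polynomial factors into a geometric base) is precisely what yields the stated constant $C^{k+1}$.
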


By use of Littlewood-Paley decomposition, we can define the class of Besov spaces.
\begin{defin} \label{d:B}
  Let $s\in\R$ and $1\leq p,r\leq+\infty$. The \emph{non-homogeneous Besov space}
$B^{s}_{p,r}\,=\,B^s_{p,r}(\R^d)$ is defined as the subset of tempered distributions $u$ for which
$$
\|u\|_{B^{s}_{p,r}}\,:=\,
\left\|\left(2^{js}\,\|\Delta_ju\|_{L^p}\right)_{j\geq -1}\right\|_{\ell^r}\,<\,+\infty\,.
$$
\end{defin}
Besov spaces are interpolation spaces between the more classical Sobolev spaces. In fact, for any $k\in\N$ and~$p\in[1,+\infty]$,
we have the following chain of continuous embeddings:
$$
B^k_{p,1}\hookrightarrow W^{k,p}\hookrightarrow B^k_{p,\infty}\,,
$$
where  $W^{k,p}$ stands for the classical Sobolev space of $L^p$ functions with all the derivatives up to the order $k$ in $L^p$.
When $1<p<+\infty$, we can refine the previous result (see Theorems 2.40 and 2.41 in \cite{BCD}): we have
$B^k_{p, \min (p, 2)}\hookrightarrow W^{k,p}\hookrightarrow B^k_{p, \max(p, 2)}$.
In particular, for all $s\in\R$, we deduce the equivalence $B^s_{2,2}\equiv H^s$, with equivalence of norms. 
In addition, for any $k\in\N$ and any $\veps\in\,]0,1[\,$, one has the equivalence $B^{k+\veps}_{\infty,\infty}\equiv \mc C^{k,\veps}$
with the classical H\"older classes, with equivalence of norms.

As an immediate consequence of the first Bernstein inequality, one gets the following embedding result.
\begin{prop}\label{p:embed}
The space $B^{s_1}_{p_1,r_1}$ is continuously embedded in the space $B^{s_2}_{p_2,r_2}$ for all indices satisfying $p_1\,\leq\,p_2$ and
$$
s_2\,<\,s_1-d\left(\frac{1}{p_1}-\frac{1}{p_2}\right)\qquad\qquad\mbox{ or }\qquad\qquad
s_2\,=\,s_1-d\left(\frac{1}{p_1}-\frac{1}{p_2}\right)\;\;\mbox{ and }\;\;r_1\,\leq\,r_2\,. 
$$
\end{prop}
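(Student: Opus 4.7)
The strategy is entirely dyadic: estimate $\Delta_j u$ block by block using Bernstein's inequality to trade integrability for a controlled polynomial factor in $2^j$, then sum in $\ell^{r_2}$ against the $\ell^{r_1}$ weight coming from $\|u\|_{B^{s_1}_{p_1,r_1}}$.

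First I would apply the first Bernstein inequality from Lemma \ref{l:bern} to each block $\Delta_j u$. For $j\geq 0$, the spectrum of $\Delta_j u$ sits in a ball of radius $C2^j$ (and for $j=-1$ in a fixed ball); since $p_1\leq p_2$, this yields
\[
\|\Delta_j u\|_{L^{p_2}}\,\leq\,C\,2^{jd\left(\frac{1}{p_1}-\frac{1}{p_2}\right)}\,\|\Delta_j u\|_{L^{p_1}}\qquad\text{for every }j\geq -1,
\]
with a constant $C$ independent of $j$. Multiplying by $2^{js_2}$ and setting $\sigma:=s_1-s_2-d\left(1/p_1-1/p_2\right)\geq 0$, I rewrite this as
\[
2^{js_2}\,\|\Delta_j u\|_{L^{p_2}}\,\leq\,C\,2^{-j\sigma}\,\bigl(2^{js_1}\|\Delta_j u\|_{L^{p_1}}\bigr).
\]

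Now I would distinguish the two regimes. If $\sigma>0$, the sequence $(2^{-j\sigma})_{j\geq -1}$ belongs to every $\ell^q$, $q\in[1,+\infty]$; combining a standard H\"older inequality in $\ell^r$ (or, when $r_1\leq r_2$, the embedding $\ell^{r_1}\hookrightarrow \ell^{r_2}$, and otherwise pairing with a suitable exponent $q$ such that $1/r_2=1/r_1+1/q$) gives
\[
\bigl\| 2^{js_2}\|\Delta_j u\|_{L^{p_2}}\bigr\|_{\ell^{r_2}}\,\leq\,C\,\|u\|_{B^{s_1}_{p_1,r_1}},
\]
for any choice of $r_1,r_2\in[1,+\infty]$, thus proving the first part. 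In the borderline case $\sigma=0$, the factor $2^{-j\sigma}$ is just $1$ and the estimate reduces to $\|(a_j)\|_{\ell^{r_2}}\leq \|(a_j)\|_{\ell^{r_1}}$, which is precisely the classical embedding $\ell^{r_1}\hookrightarrow \ell^{r_2}$ valid exactly when $r_1\leq r_2$.

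I do not expect any genuine obstacle here: the only point that merits care is the treatment of the low-frequency block $j=-1$ (since $\Delta_{-1}$ is supported on a fixed ball rather than an annulus), but Bernstein's first inequality in Lemma \ref{l:bern} applies to this block as well with the same constant, so both estimates above hold uniformly in $j\geq -1$.
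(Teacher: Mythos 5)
Your proof is correct and is exactly the argument the paper alludes to: the paper presents this proposition as "an immediate consequence of the first Bernstein inequality" (Lemma \ref{l:bern}), which is precisely your block-by-block estimate $\|\Delta_j u\|_{L^{p_2}}\lesssim 2^{jd(1/p_1-1/p_2)}\|\Delta_j u\|_{L^{p_1}}$ followed by the $\ell^{r}$ summation (H\"older when $\sigma>0$, the monotonicity $\ell^{r_1}\hookrightarrow\ell^{r_2}$ in the borderline case). Your remark on the low-frequency block $j=-1$ is also handled correctly, since Bernstein's first inequality applies to spectra contained in a fixed ball.
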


In particular, we get the following chain of continuous embeddings:
\begin{equation*}
B^s_{p,r} \hookrightarrow B^{s - \frac{d}{p}}_{\infty, r} \hookrightarrow B^0_{\infty, 1} \hookrightarrow L^\infty\,,
\end{equation*}
provided that the triplet $(s, p, r) \in \mathbb{R} \times [1, +\infty]^2$ satisfies the condition
\begin{equation} \label{eq:AnnLInfty}
s > \frac{d}{p} \qquad\qquad \text{ or } \qquad\qquad s = \frac{d}{p}\quad \text{ and }\quad r = 1\,.
\end{equation}
Similarly, as announced at the beginning of the manuscript, for $(s,p,r)$ verifying \eqref{cond:Lipschitz}, we get $B^s_{p,r}\,\hookrightarrow\,W^{1,\infty}$.

To conclude this part, we recall some useful inequalities by means of Littlewood-Paley decomposition.
They are all consequences of a unique simple technique, which consists in cutting a tempered distribution into low and high frequencies and then optimising
the size $N$ at which realising the cut. 

\begin{lemma} \label{l:GN-ineq} 
Let $d$ be the space dimension and let $ p_0 > d $. For any function $ f \in \mathcal{S} $, the following Gagliardo-Nirenberg type inequalities hold true:
 \[
\| f \|_{L^\infty}\,\lesssim\,\| f\|_{L^2}^{\alpha_1}\,\|\nabla f \|^{1-\alpha_1}_{L^\infty} \qquad \mbox{ and }\qquad
\|\nabla f \|_{L^\infty}\,\lesssim\,\|\nabla f\|_{L^2}^{\alpha_2}\,\|\Delta f \|^{1-\alpha_2}_{L^{p_0}}\,,
\]
 for two suitable exponent $ \alpha_{1,2} \in \,]0,1[\, $.
The above inequalities extend also to vector-valued functions. In addition, for any vector field $ u \in \mathcal{S}  $ such that $ \div(u) = 0 $, one has
\[
\|u\|_{L^\infty}\,\lesssim\,\left\|u\right\|_{L^2}^{\alpha_3}\,\left\|\Omega\right\|_{L^{p_0}}^{1-\alpha_3}
\]
 where  $ \Omega\,:=\,Du\,-\,^t\nabla u $ denotes the vorticity matrix of $ u $ and $ \alpha_{3} \in \,]0,1[\, $ is a suitable   exponent.
\end{lemma}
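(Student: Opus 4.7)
The strategy is uniform across the three inequalities: write the Littlewood-Paley decomposition $f = \sum_{j\geq -1} \Delta_j f$, split the sum at an integer threshold $N$ to be optimized, bound low frequencies in $L^\infty$ via Bernstein's inequality (Lemma \ref{l:bern}) using the $L^2$ norm, bound high frequencies via Bernstein using the ``high-order'' norm, and then minimize the resulting expression in $N\in\N$.

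For the first inequality, I would write $\|f\|_{L^\infty} \leq \sum_{j\leq N-1}\|\Delta_j f\|_{L^\infty} + \sum_{j\geq N}\|\Delta_j f\|_{L^\infty}$. The low-frequency blocks are bounded by $2^{jd/2}\|\Delta_j f\|_{L^2}\lesssim 2^{jd/2}\|f\|_{L^2}$, summing to $\lesssim 2^{Nd/2}\|f\|_{L^2}$. The high-frequency blocks satisfy $\|\Delta_j f\|_{L^\infty}\lesssim 2^{-j}\|\nabla\Delta_j f\|_{L^\infty}\lesssim 2^{-j}\|\nabla f\|_{L^\infty}$, summing to $\lesssim 2^{-N}\|\nabla f\|_{L^\infty}$. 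Choosing $N$ so that the two contributions balance gives $\alpha_1 = 2/(d+2)$. For the second inequality, one uses exactly the same split on $\nabla f$: the low part is controlled by $2^{jd/2}\|\nabla\Delta_j f\|_{L^2}\lesssim 2^{jd/2}\|\nabla f\|_{L^2}$, while for the high part one writes $\|\nabla\Delta_j f\|_{L^\infty}\lesssim 2^{-j}\|\Delta\Delta_j f\|_{L^\infty}\lesssim 2^{-j}\,2^{jd/p_0}\|\Delta f\|_{L^{p_0}}$, where the second Bernstein inequality is applied with the exponent pair $(p_0,\infty)$. Here the hypothesis $p_0>d$ is crucial: it ensures $1-d/p_0>0$, so the geometric series in the high-frequency part converges, and the optimization in $N$ yields a well-defined exponent $\alpha_2\in\,]0,1[\,$.

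The third inequality is the one requiring a genuine new ingredient, namely the Biot-Savart reconstruction of a divergence-free field from its vorticity. Since $\div u=0$, a direct computation gives $\Delta u_i = \partial_k(\partial_k u_i - \partial_i u_k) = \partial_k\Omega_{ik}$, so at any frequency $2^j$ with $j\geq 0$ the block $\Delta_j u$ can be written as $\Delta^{-1}\partial_k$ applied to $\Delta_j\Omega_{ik}$, which is an operator of order $-1$ acting on the spectrally localized function $\Delta_j\Omega$. Using Bernstein to go from $L^{p_0}$ to $L^\infty$, one obtains $\|\Delta_j u\|_{L^\infty}\lesssim 2^{-j(1-d/p_0)}\|\Omega\|_{L^{p_0}}$ for $j\geq 0$, while the low-frequency blocks are again estimated via $2^{jd/2}\|u\|_{L^2}$ (including $\Delta_{-1}u$, for which Bernstein applies trivially). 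Splitting at $N\geq 0$ and optimizing as before produces the desired $\alpha_3\in\,]0,1[\,$.

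I expect the main (minor) subtlety to be the treatment of the very low frequency block $\Delta_{-1}$ in the third inequality, where one cannot invert $\Delta$; this is handled by leaving $\Delta_{-1}u$ in the low-frequency sum and controlling it with the $L^2$ norm of $u$, which is consistent with the scheme. Everything else is a routine balancing of two monotone (in $N$) quantities. The only hypothesis that must be carefully preserved throughout is $p_0>d$, which guarantees the positivity of the high-frequency exponent $1-d/p_0$.
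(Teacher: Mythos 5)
Your proof is correct and follows exactly the route the paper indicates for Lemma \ref{l:GN-ineq}: a Littlewood--Paley splitting into low and high frequencies, Bernstein's inequalities on each piece (together with the identity $\Delta u_i=\partial_k\Omega_{ik}$ for the divergence-free case), and optimisation of the cut-off $N$, with the hypothesis $p_0>d$ correctly identified as what makes the high-frequency geometric series converge. No discrepancies to report.
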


\subsection{Paradifferential calculus} \label{ss:para}

We now apply Littlewood-Paley decomposition to state some useful results from paradifferential calculus. Again, we refer to Chapter 2
of \cite{BCD} for details and further results.

To begin with, let us introduce the \emph{paraproduct operator} (after J.-M. Bony \cite{Bony}).
Constructing the paraproduct operator relies on the observation that, 
formally, any product of two tempered distributions $u$ and $v$ may be decomposed into the sum
\begin{equation}\label{eq:bony}
u\,v\;=\;\mathcal{T}_uv\,+\,\mathcal{T}_vu\,+\,\mathcal{R}(u,v)\,,
\end{equation}
where we have defined
$$
\mathcal{T}_uv\,:=\,\sum_jS_{j-1}u\,\Delta_j v,\qquad\qquad\mbox{ and }\qquad\qquad
\mathcal{R}(u,v)\,:=\,\sum_j\sum_{|k-j|\leq1}\Delta_j u\,\Delta_{k}v\,.
$$
The above operator $\mc T$ is called ``paraproduct'' whereas
$\mc R$ is called ``remainder''.
The paraproduct and remainder operators have many nice continuity properties over Besov spaces, which are collected in the next statement.
\begin{prop}\label{p:op}
For any $(s,p,r)\in\R\times[1,+\infty]^2$ and $t>0$, the paraproduct operator 
$\mathcal{T}$ maps continuously $L^\infty\times B^s_{p,r}$ in $B^s_{p,r}$ and  $B^{-t}_{\infty,\infty}\times B^s_{p,r}$ in $B^{s-t}_{p,r}$.
Moreover, the following estimates hold:
\[
\|\mathcal{T}_uv\|_{B^s_{p,r}}\,\lesssim\,\|u\|_{L^\infty}\,\|\nabla v\|_{B^{s-1}_{p,r}}\qquad\mbox{ and }\qquad
\|\mathcal{T}_uv\|_{B^{s-t}_{p,r}}\,\lesssim\,\|u\|_{B^{-t}_{\infty,\infty}}\,\|\nabla v\|_{B^{s-1}_{p,r}}\,.
\]

For any $(s_1,p_1,r_1)$ and $(s_2,p_2,r_2)$ in $\R\times[1,+\infty]^2$ such that 
$s_1+s_2>0$, $1/p:=1/p_1+1/p_2\leq1$ and~$1/r:=1/r_1+1/r_2\leq1$,
the remainder operator $\mathcal{R}$ maps continuously~$B^{s_1}_{p_1,r_1}\times B^{s_2}_{p_2,r_2}$ into~$B^{s_1+s_2}_{p,r}$.
In the case $s_1+s_2=0$, if in addition $r=1$, the operator $\mathcal{R}$ is continuous from $B^{s_1}_{p_1,r_1}\times B^{s_2}_{p_2,r_2}$ with values
in $B^{0}_{p,\infty}$.
\end{prop}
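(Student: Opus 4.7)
The plan is to exploit the spectral localisation properties of the partial sums defining $\mathcal{T}$ and $\mathcal{R}$, so that Bernstein's inequality (Lemma \ref{l:bern}) and H\"older's inequality reduce the claim to elementary weighted $\ell^r$ estimates on the dyadic $L^p$ blocks of $u$ and $v$. Throughout, the key distinction to keep in mind is the dichotomy \emph{annular vs.\ ball} localisation of the two building blocks $\mathcal{T}$ and $\mathcal{R}$: this dichotomy is exactly what explains why $\mathcal{T}$ asks nothing of the regularity indices on its two arguments, whereas $\mathcal{R}$ demands either $s_1+s_2>0$ or the endpoint condition $s_1+s_2=0$ with $r=1$.

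\textbf{Paraproduct.} For every $j$, the product $S_{j-1}u\,\Delta_jv$ has Fourier support in an annulus of the form $\{c\,2^j\leq|\xi|\leq C\,2^j\}$, because $S_{j-1}u$ is supported in a ball of radius $\sim 2^{j-1}$ while $\Delta_jv$ is supported in a corona of size $\sim 2^j$. Consequently there is an integer $N_0$ such that $\Delta_k\mathcal{T}_uv=\sum_{|j-k|\leq N_0}\Delta_k(S_{j-1}u\,\Delta_jv)$. H\"older's inequality and the uniform $L^p$-boundedness of $\Delta_k$ give
\[
\|\Delta_k\mathcal{T}_uv\|_{L^p}\;\lesssim\;\sum_{|j-k|\leq N_0}\|S_{j-1}u\|_{L^\infty}\,\|\Delta_jv\|_{L^p}\,.
\]
For the first stated bound we simply use $\|S_{j-1}u\|_{L^\infty}\leq\|u\|_{L^\infty}$ and invoke Bernstein's inequality to write $\|\Delta_jv\|_{L^p}\lesssim 2^{-j}\|\nabla\Delta_jv\|_{L^p}$; multiplying by $2^{ks}$, taking the $\ell^r$ norm in $k$ and shifting indices yields the estimate. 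For the second bound, expand $S_{j-1}u=\sum_{k\leq j-2}\Delta_ku$: using the definition of the $B^{-t}_{\infty,\infty}$ norm,
\[
\|S_{j-1}u\|_{L^\infty}\;\leq\;\sum_{k\leq j-2}\|\Delta_ku\|_{L^\infty}\;\lesssim\;\|u\|_{B^{-t}_{\infty,\infty}}\sum_{k\leq j-2}2^{kt}\;\lesssim\;2^{jt}\|u\|_{B^{-t}_{\infty,\infty}}\,,
\]
where the assumption $t>0$ is crucial for convergence of the geometric series. Multiplying by $2^{k(s-t)}$ and taking the $\ell^r$ norm concludes.

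\textbf{Remainder.} Here each summand $\Delta_ju\,\Delta_kv$ (with $|k-j|\leq 1$) is Fourier-localised in a \emph{ball} of radius $\sim 2^j$, not in an annulus. Therefore only the indices $j\geq m-N_0$ contribute to $\Delta_m\mathcal{R}(u,v)$, and H\"older's inequality with $1/p_1+1/p_2=1/p$ yields
\[
\|\Delta_m\mathcal{R}(u,v)\|_{L^p}\;\lesssim\;\sum_{j\geq m-N_0}\sum_{|k-j|\leq 1}\|\Delta_ju\|_{L^{p_1}}\,\|\Delta_kv\|_{L^{p_2}}\,.
\]
Multiplying by $2^{m(s_1+s_2)}$ and factoring out $2^{(m-j)(s_1+s_2)}$, we rewrite the right-hand side as a discrete convolution between the weight $(2^{(m-j)(s_1+s_2)}\mathds{1}_{j\geq m-N_0})_m$ and the product of the two sequences $\bigl(2^{js_1}\|\Delta_ju\|_{L^{p_1}}\bigr)_j$ and $\bigl(2^{js_2}\|\Delta_kv\|_{L^{p_2}}\bigr)_j$. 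When $s_1+s_2>0$ the weight is summable in $j\geq m-N_0$, so a discrete Young inequality combined with H\"older in $\ell^r$ with $1/r=1/r_1+1/r_2$ controls the $\ell^r$ norm of the convolution by $\|u\|_{B^{s_1}_{p_1,r_1}}\|v\|_{B^{s_2}_{p_2,r_2}}$.

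The main (and only) subtle point is the endpoint case $s_1+s_2=0$: summability in $j$ of the weight is then lost and we can only take a supremum in $m$, which dictates that the target space be $B^0_{p,\infty}$. The assumption $r=1$ is needed precisely to apply H\"older in $\ell^r$ on the two sequences \emph{before} taking the sup in $m$, since the summation in $j$ has to be performed with no help from the geometric weight. Once this is done the rest is routine; modulo this borderline case, the entire proof reduces to the combination of Bernstein, H\"older in $L^p$ and Young/H\"older in $\ell^r$ sketched above.
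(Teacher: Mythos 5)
Your argument is correct and is exactly the classical proof of Bony's paraproduct and remainder estimates from Chapter 2 of [BCD], which is the source the paper cites for this proposition rather than proving it itself: annulus localisation plus reverse Bernstein and the geometric sum in $k\leq j-2$ for $\mathcal{T}$, ball localisation plus discrete Young/H\"older for $\mathcal{R}$, with the correct identification of why $s_1+s_2>0$ (or $r=1$ at the endpoint, landing only in $B^0_{p,\infty}$) is needed. The only cosmetic caveat is that the reverse Bernstein step $\|\Delta_j v\|_{L^p}\lesssim 2^{-j}\|\nabla\Delta_j v\|_{L^p}$ requires annulus (not ball) localisation, which holds here because the paraproduct sum only involves $j\geq 1$ (as $S_{j-1}=0$ for $j\leq 0$); it is worth saying this explicitly.
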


The consequence of this proposition is that, when $s>0$, the product acts continuously on the spaces $B^s_{p,r}\cap L^\infty$;
in particular, the product is continuous from the space $B^s_{p,r}\times B^s_{p,r}$ into $B^s_{p,r}$
as long as condition \eqref{eq:AnnLInfty} holds with $s > 0$.
Moreover, in that case, we have the so-called \emph{tame estimates}.

\begin{cor}\label{c:tame}
Let $(s,p, r)\in\R\times[1,+\infty]^2$ be such that that $s > 0$. Then, for any $f$ and $g$ belonging to $L^\infty\cap B^s_{p,r}$, the product
$fg$ also belongs to that space and we have
\begin{equation}
\label{alg:prop:2}
\left\| f\,g \right\|_{B^s_{p,r}}\, \lesssim \,\| f \|_{L^\infty}\, \|g\|_{B^s_{p,r}}\, +\, \| f \|_{B^s_{p,r}} \,\| g \|_{L^\infty}\,.
\end{equation}
\end{cor}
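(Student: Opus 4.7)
The plan is to prove the tame estimate via Bony's paraproduct decomposition \eqref{eq:bony} applied to the product $fg$, estimating each of the three resulting pieces separately by means of Proposition \ref{p:op}. Writing
\[
fg\,=\,\mathcal{T}_f g\,+\,\mathcal{T}_g f\,+\,\mathcal{R}(f,g)\,,
\]
I would treat the two paraproduct terms first and the remainder afterwards. The membership $fg\in L^\infty\cap B^s_{p,r}$ will follow automatically once the quantitative bound is in hand.

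For the paraproduct pieces, I would directly invoke the first estimate of Proposition \ref{p:op} (the $L^\infty\times B^s_{p,r}$ pairing), obtaining
\[
\|\mathcal{T}_f g\|_{B^s_{p,r}}\,\lesssim\,\|f\|_{L^\infty}\,\|\nabla g\|_{B^{s-1}_{p,r}}\,\lesssim\,\|f\|_{L^\infty}\,\|g\|_{B^s_{p,r}}\,,
\]
and symmetrically for $\mathcal{T}_g f$. The auxiliary inequality $\|\nabla g\|_{B^{s-1}_{p,r}}\lesssim \|g\|_{B^s_{p,r}}$ used here is an immediate consequence of the Bernstein inequalities of Lemma \ref{l:bern} applied dyadic block by dyadic block.

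The remainder term is slightly more delicate, because Proposition \ref{p:op} requires the sum of the regularity indices to be strictly positive, whereas only $L^\infty$ regularity is available on one of the two factors. My strategy is to trade that $L^\infty$ norm for the continuous embedding $L^\infty\hookrightarrow B^0_{\infty,\infty}$, which follows at once from Definition \ref{d:B} and the $L^p$-boundedness of the operators $\Delta_j$. Choosing $(s_1,p_1,r_1)=(0,\infty,\infty)$ and $(s_2,p_2,r_2)=(s,p,r)$, the conditions $s_1+s_2=s>0$, $1/p_1+1/p_2=1/p\leq 1$ and $1/r_1+1/r_2=1/r\leq 1$ are all satisfied, so the continuity property of $\mathcal{R}$ in Proposition \ref{p:op} yields
\[
\|\mathcal{R}(f,g)\|_{B^s_{p,r}}\,\lesssim\,\|f\|_{B^0_{\infty,\infty}}\,\|g\|_{B^s_{p,r}}\,\lesssim\,\|f\|_{L^\infty}\,\|g\|_{B^s_{p,r}}\,,
\]
and of course the symmetric version with $f$ and $g$ swapped is available as well. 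Summing the three bounds yields the sought estimate \eqref{alg:prop:2}.

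The main (and only really substantive) obstacle I foresee is precisely the choice of indices for the remainder: one must commit to absorbing one factor through the $L^\infty\hookrightarrow B^0_{\infty,\infty}$ embedding, rather than attempting a symmetric distribution of the $L^\infty$ norm across both arguments, so as to satisfy simultaneously the strict positivity $s_1+s_2>0$ and the summability constraints on $p,r$. Once this is done, no commutator or interpolation argument is needed, and the proof reduces to a direct bookkeeping based on Proposition \ref{p:op}.
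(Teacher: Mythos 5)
Your proof is correct and is exactly the argument the paper intends: Corollary \ref{c:tame} is presented as an immediate consequence of Proposition \ref{p:op} via Bony's decomposition, with the paraproducts handled by the $L^\infty\times B^s_{p,r}$ continuity and the remainder by the embedding $L^\infty\hookrightarrow B^0_{\infty,\infty}$ together with the condition $s_1+s_2=s>0$. Your index bookkeeping for $\mathcal{R}$ and the Bernstein-based bound $\|\nabla g\|_{B^{s-1}_{p,r}}\lesssim\|g\|_{B^s_{p,r}}$ are both sound, so nothing is missing.
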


We remark that in the spaces $B^0_{p,r}$ the product is \emph{not} continuous, as the continuity of the remainder operator $\mc R(f,g)$ with values in $B^0_{p,r}$ fails in this case. However, following the analysis in \cite{D:F}, we remark
some information can still be obtained in some special cases, which we now highlight. 
We focus on the case $B^0_{\infty,1}$, in the end the only relevant one for our scopes.

First of all, by requiring some better regularity for $g$, we can replace estimate \eqref{alg:prop:2} by \tsl{e.g.}
\begin{equation}
\label{algebra:B^0}
\| f g\|_{B^{0}_{\infty,r}}\, \lesssim\, \| f \|_{B^{0}_{\infty,1}} \,\| g\|_{B^{1/2}_{\infty,1}}\,.
\end{equation}
On the other hand, we can use some special structure in presence of differentiation. More precisely, for a function $ f \in B^{1}_{\infty,1}$ and a vector field
$ G \in B^{0}_{\infty,1} $ such that $ \div(G) = 0 $, we can use the Bony decomposition \eqref{eq:bony} and write
\begin{equation*}
\nabla f \cdot G\, =\, \div(f\,G)\, =\, \mathcal{T}_{\nabla f} G\, +\, \mathcal{T}_{G} \nabla f\, +\, \div\big(\mathcal{R}(f, G)\big)\,.
\end{equation*}
Thus, by using
\begin{equation*}
\left\| \div\big(\mathcal{R}(f, G)\big)\right\|_{B^{0}_{\infty,1}}\, \lesssim\, \| \mathcal{R}(f, G) \|_{B^{1}_{\infty,1}}\, \lesssim\,
\| f \|_{B^{1}_{\infty,1}}\, \|  G \|_{B^{0}_{\infty,1}}\,, 
\end{equation*}
it is easy to deduce that
\begin{equation}
\label{algebra:B^0_2}
\| \nabla f \cdot G \|_{B^{0}_{\infty,1}} \, \lesssim \, \| f \|_{B^{1}_{\infty,1}}\, \|  G \|_{B^{0}_{\infty,1}}\,.
\end{equation}
Inequalities \eqref{algebra:B^0} and \eqref{algebra:B^0_2} will be needed in the computations of Subsection \ref{ss:g=1_d=2}.

Next, we recall the following result, in the same spirit of the classical Meyer's \emph{paralinearisation} theorem.
It will be important when comparing the Sobolev norms of the density $\rho$ with the ones of its inverse $1/\rho$ and of $\log\rho$.
The proof can be found in \cite{D1}.
\begin{prop}\label{p:comp}
Let $I$ be an open  interval of $~\R$ and let $F:I\rightarrow\R$ be a smooth function. 

Then for all compact subset $J\subset I$, all $s>0$ and all $(p,r)\in[1,+\infty]$, there exists a constant $C>0$
such that, for any function $a$ valued in $J$ and with gradient in $B^{s-1}_{p,r}$,  we have
$\nabla(F\circ a)\in B^{s-1}_{p,r}$ together with the estimate
$$
\|\nabla(F\circ a)\|_{B^{s-1}_{p,r}}\,\leq\,C\,\|\nabla a\|_{B^{s-1}_{p,r}}\,.
$$
\end{prop}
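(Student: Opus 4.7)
\medbreak
\noindent
\textbf{Proof plan.}
The first step is a harmless reduction: since $J$ is a compact subset of the open interval $I$, I would extend $F$ to a smooth function on the whole real line whose derivatives of every order are globally bounded (for instance by multiplying $F$ by a smooth cutoff equal to $1$ on $J$ and $0$ outside a slightly larger compact still contained in $I$, and extending by $0$). The composition $F\circ a$ is unaffected because $a$ takes values in $J$. Henceforth I may assume $\|F^{(k)}\|_{L^\infty(\R)}\leq C_k$ for every $k\geq 0$, with constants depending only on $J$ and $F$. By the chain rule, $\nabla(F\circ a)=F'(a)\,\nabla a$, so the task reduces to bounding this product in $B^{s-1}_{p,r}$.

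The natural tool is Bony's paraproduct decomposition:
\[
F'(a)\,\nabla a\;=\;\mathcal{T}_{F'(a)}\nabla a\;+\;\mathcal{T}_{\nabla a}F'(a)\;+\;\mathcal{R}\bigl(F'(a),\nabla a\bigr).
\]
The first summand is the easy one: by Proposition \ref{p:op} and the fact that $F'(a)\in L^\infty$ uniformly (with a bound depending only on $J$ and $F$), one obtains
\[
\|\mathcal{T}_{F'(a)}\nabla a\|_{B^{s-1}_{p,r}}\;\lesssim\;\|F'(a)\|_{L^\infty}\,\|\nabla a\|_{B^{s-1}_{p,r}}\;\leq\;C\,\|\nabla a\|_{B^{s-1}_{p,r}},
\]
which is already of the desired form. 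For the remaining two terms, which couple dyadic blocks of $F'(a)$ with comparable or smaller blocks of $\nabla a$, the strategy is to decompose $F'(a)$ by a telescoping sum against its low-frequency cut-offs:
\[
F'(a)\;=\;F'(S_0 a)\;+\;\sum_{j\geq 0}\bigl[F'(S_{j+1}a)-F'(S_j a)\bigr]\;=\;F'(S_0 a)\;+\;\sum_{j\geq 0} M_j\,\Delta_{j+1}a,
\]
where $M_j:=\int_0^1 F''\bigl(S_j a+\tau\Delta_{j+1}a\bigr)\,d\tau$ satisfies $\|M_j\|_{L^\infty}\leq \|F''\|_{L^\infty}$ uniformly in $j$, thanks to the extension step. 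Substituting this expansion into $\mathcal{T}_{\nabla a}F'(a)$ and $\mathcal{R}(F'(a),\nabla a)$ and combining with the spectral localization of $\Delta_{j+1}a$ and of $\Delta_k\nabla a$, I would bound each dyadic contribution by $\|F''\|_{L^\infty}$ times a bilinear expression in $\|\Delta_{k}\nabla a\|_{L^p}$ and $\|\Delta_{j+1}a\|_{L^p}$; using the Bernstein relation $\|\Delta_{j+1}a\|_{L^p}\sim 2^{-(j+1)}\|\Delta_{j+1}\nabla a\|_{L^p}$ and the discrete convolution $\ell^r$-estimates built into the definition of $B^{s-1}_{p,r}$, everything can be summed and produces a control by $C\,\|\nabla a\|_{B^{s-1}_{p,r}}$. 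The low-frequency piece $F'(S_0 a)$ is treated separately: it has all its derivatives bounded in $L^\infty$ (since $S_0 a$ is smooth and bounded), and the contribution of $\mathcal{T}_{\nabla a}F'(S_0 a)$ and $\mathcal{R}(F'(S_0 a),\nabla a)$ is absorbed trivially.

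The step I expect to be the main obstacle is the apparent circularity in the analysis of $\mathcal{T}_{\nabla a}F'(a)$ and $\mathcal{R}(F'(a),\nabla a)$: applying Proposition \ref{p:op} naively to these terms would require a Besov control of $F'(a)$, which is exactly an instance of the statement we are trying to prove (and, moreover, $a$ itself need not belong to any Besov space, only its gradient does). The telescoping sum circumvents this obstruction by expressing $F'(a)$ as an $L^\infty$-bounded combination of the spectrally localized pieces $\Delta_{j+1}a$, so that the estimate is closed purely in terms of $\|\nabla a\|_{B^{s-1}_{p,r}}$ and absolute constants depending on $J$ and $F$.
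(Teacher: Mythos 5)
First, for context: the paper does not actually prove Proposition \ref{p:comp}; it cites \cite{D1}, where the result is obtained by applying Meyer's first linearisation \emph{to $F\circ a$ itself} — writing $F(a)=F(S_0a)+\sum_j m_j\,\Delta_j a$, estimating $2^{ks}\|\Delta_k(F\circ a)\|_{L^p}$, and only then taking one derivative. The point of that organisation is that every dyadic summation takes place at the \emph{positive} regularity index $s$, with the reverse Bernstein inequality $\|\Delta_ja\|_{L^p}\lesssim 2^{-j}\|\Delta_j\nabla a\|_{L^p}$ converting all bounds into $\|\nabla a\|_{B^{s-1}_{p,r}}$. Your route (chain rule first, then Bony on $F'(a)\nabla a$, then the telescoping expansion of $F'(a)$) is different, and parts of it do work: the reduction to globally bounded derivatives of $F$ is fine, $\mathcal{T}_{F'(a)}\nabla a$ is immediate, and $\mathcal{T}_{\nabla a}F'(a)$ closes as well, since the telescoping yields $\|\Delta_kF'(a)\|_{L^p}\lesssim 2^{-ks}\,\tilde c_k\,\|\nabla a\|_{B^{s-1}_{p,r}}$ while $\|S_{k-1}\nabla a\|_{L^\infty}=\|\nabla S_{k-1}a\|_{L^\infty}\lesssim 2^{k}\|a\|_{L^\infty}$ absorbs the lost derivative.

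The genuine gap is the remainder $\mathcal{R}\bigl(F'(a),\nabla a\bigr)$. Its blocks $\Delta_{k'}F'(a)\,\Delta_k\nabla a$ (with $|k-k'|\le1$) are supported in balls of radius $\sim 2^{k}$, so summing them into $B^{s-1}_{p,r}$ requires a strictly positive total regularity at exponent $s-1$: concretely, one needs $\|\Delta_{k'}F'(a)\|_{L^\infty}\lesssim 2^{-k'\delta}$ for some $\delta>1-s$, otherwise one is left with a tail $\sum_{k\ge m}2^{(m-k)(s-1)}c_k$ that has no convolution structure when $s\le 1$. But the telescoping sum only gives $\|\Delta_{j+1}a\|_{L^\infty}\lesssim\min\bigl(\|a\|_{L^\infty},\,2^{j(d/p-s)}c_j\,\|\nabla a\|_{B^{s-1}_{p,r}}\bigr)$, i.e.\ at best $\delta=s-d/p$, which is nonpositive for $s\le d/p$ and insufficient unless $2s>1+d/p$. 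Since the proposition is asserted for \emph{every} $s>0$ and every $p$ (take $s=1/2$, $p=2$, $d=3$), the step ``everything can be summed'' fails precisely here; note that the low-regularity range $s-1\le 0$ is not exotic — the paper itself uses the proposition at $s-1=0$. To close your scheme one would have to exploit the divergence structure of the remainder, e.g.\ write $\Delta_{k'}F'(a)\,\nabla\Delta_ka=\nabla\bigl(\Delta_{k'}F'(a)\,\Delta_ka\bigr)-\nabla\Delta_{k'}F'(a)\,\Delta_ka$ and then control the second piece — which your plan does not address — or, more simply, abandon the chain rule and estimate $F\circ a$ in $B^s_{p,r}$ before differentiating, as in \cite{D1}.
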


Finally, we need a few commutator estimates. Recall that, given two operators $A$ and $B$, we denote by $[A,B]\,:=\,AB-BA$ their commutator operator.
The first commutator estimate corresponds to Lemma 2.100 of \cite{BCD} (see also Remark 2.101 therein).
\begin{lemma}\label{l:CommBCD}
Let $s>0$ and $(s,p,r)\in\R\times[1,+\infty]^2$ such that condition \eqref{cond:Lipschitz} is satisfied. 
Let $v$ be a vector field on $\R^d$, belonging to $B^s_{p,r}$. Then, for any $f\in B^s_{p,r}$, one has
\begin{equation*}
2^{js} \left\| \big[ v \cdot \nabla, \Delta_j \big] f  \right\|_{L^p}\,\lesssim\,
c_j\, \Big( \|\nabla v \|_{L^\infty}\, \| f \|_{B^s_{p,r}} \,+\, \|\nabla v \|_{B^{s-1}_{p,r}}\, \|\nabla f \|_{L^\infty} \Big)\,,
\end{equation*}
for some sequence $\big(c_j\big)_{j\geq -1}$ belonging to the unit ball of $\ell^r$. 
\end{lemma}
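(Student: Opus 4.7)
The plan is to follow the classical paradifferential strategy of Chapter 2 of \cite{BCD}, namely, to reduce the commutator to three pieces via Bony's decomposition and then treat the genuinely cancelling piece separately from the two non-cancelling ones. Concretely, I would write $v_k\,\partial_k f = \mathcal{T}_{v_k}\partial_k f + \mathcal{T}_{\partial_k f} v_k + \mathcal{R}(v_k,\partial_k f)$, apply $\Delta_j$, and subtract $v\cdot\nabla\Delta_j f$ decomposed in the same way; after summation over $k$ the commutator $[v\cdot\nabla,\Delta_j]f$ splits as
\[
[\Delta_j,\mathcal{T}_{v_k}]\partial_k f\;+\;\bigl(\Delta_j\mathcal{T}_{\partial_k f}v_k-\mathcal{T}_{\partial_k \Delta_j f}v_k\bigr)\;+\;\bigl(\Delta_j\mathcal{R}(v_k,\partial_k f)-\mathcal{R}(v_k,\partial_k\Delta_j f)\bigr).
\]

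The heart of the argument is the paraproduct commutator. Using the spectral support of the summands of $\mathcal{T}_{v_k}\partial_k f = \sum_{j'} S_{j'-1}v_k\,\Delta_{j'}\partial_k f$, I would reduce the range of $j'$ to $|j-j'|\leq N_0$ for some fixed $N_0$. For each such pair, I would expand the convolution kernel $2^{jd}h(2^j\cdot)$ of $\Delta_j$ by a first-order Taylor formula in the space variable: this produces the classical mean-value bound $\|[\Delta_j,a]g\|_{L^p}\lesssim 2^{-j}\|\nabla a\|_{L^\infty}\|g\|_{L^p}$ with $a=S_{j'-1}v_k$ and $g=\partial_k\Delta_{j'} f$. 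Combining with $\|\nabla S_{j'-1}v_k\|_{L^\infty}\leq \|\nabla v\|_{L^\infty}$ and the Bernstein bound $\|\partial_k\Delta_{j'}f\|_{L^p}\lesssim 2^{j'}\|\Delta_{j'}f\|_{L^p}$, summation over $|j-j'|\leq N_0$ gives exactly $c_j\,2^{-js}\|\nabla v\|_{L^\infty}\|f\|_{B^s_{p,r}}$, with $(c_j)_j$ in the unit ball of $\ell^r$ (by a standard convolution inequality in~$\ell^r$).

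For the non-cancelling blocks, I would invoke the continuity properties of Proposition \ref{p:op} directly, exploiting that condition \eqref{cond:Lipschitz} gives the embedding $B^s_{p,r}\hookrightarrow W^{1,\infty}$, hence $\nabla f\in L^\infty$. The spectral localisation of $\mathcal{T}_{\partial_k f}v_k$ puts it essentially at the frequencies of $v_k$, so both terms $\Delta_j\mathcal{T}_{\partial_k f}v_k$ and $\mathcal{T}_{\partial_k\Delta_j f}v_k$ are bounded by $c_j\,2^{-js}\|\nabla f\|_{L^\infty}\|\nabla v\|_{B^{s-1}_{p,r}}$. The remainder pair is treated analogously, using that $(s-1)+1=s>0$ ensures continuity of $\mathcal{R}$ into $B^s_{p,r}$ and a Bernstein exchange of one derivative from $\partial_k f$ to $v_k$.

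The main obstacle is the endpoint situation where \eqref{cond:Lipschitz} holds with equality $s=1+d/p$ and $r=1$: there the inclusion $B^s_{p,r}\hookrightarrow W^{1,\infty}$ is sharp, so one must carefully maintain the $\ell^1$ summability of the sequence $(c_j)$ and cannot afford any logarithmic loss in $j$. This is ensured by the strict spectral support restriction $|j-j'|\leq N_0$ in the commutator term (which turns the summation into a convolution of sequences rather than a full sum) and by being careful with the low-frequency block $j=-1$, treated separately using only boundedness of $\chi(D)$ on $L^p$ together with the embedding of $B^s_{p,r}$ into $W^{1,\infty}$.
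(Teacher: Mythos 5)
The paper does not prove this lemma: it is quoted verbatim from Lemma 2.100 (and Remark 2.101) of \cite{BCD}, so your proposal can only be measured against the standard argument there --- which is indeed the one you outline (Bony decomposition, first-order Taylor expansion of the kernel of $\Delta_j$ for the paraproduct commutator, continuity of $\mathcal T$ and $\mathcal R$ for the remaining blocks). Your treatment of $[\mathcal T_{v^k},\Delta_j]\partial_k f$ and of the pair $\Delta_j\mathcal T_{\partial_k f}v^k$, $\mathcal T_{\partial_k\Delta_j f}v^k$ is correct: in the latter only blocks $\Delta_{j'}v$ with $j'\geq 1$ occur, so the reverse Bernstein inequality converts $\|\Delta_{j'}v\|_{L^p}$ into $2^{-j'}\|\Delta_{j'}\nabla v\|_{L^p}$ and the $\ell^r$ convolution closes (the endpoint $s=1+d/p$, $r=1$ causes no extra difficulty here, contrary to what you suggest; the summations are uniform in $r$ and $s>0$).

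The genuine gap is in the remainder pair and in the block $j=-1$. You propose to bound $\Delta_j\mathcal R(v^k,\partial_k f)$ and $\mathcal R(v^k,\partial_k\Delta_j f)$ \emph{separately} via the continuity of $\mathcal R$; but the contribution of $\Delta_{-1}v$ to the remainder can only be estimated through $\|\Delta_{-1}v\|_{L^\infty}$ or $\|\Delta_{-1}v\|_{L^p}$, and neither is controlled by $\|\nabla v\|_{L^\infty}+\|\nabla v\|_{B^{s-1}_{p,r}}$ (take $v$ constant: the full commutator vanishes, yet each Bony piece is nonzero, so no term-by-term bound by the stated right-hand side can hold). The same objection applies to your treatment of $[v\cdot\nabla,\Delta_{-1}]f$ ``using only boundedness of $\chi(D)$ on $L^p$'', which produces the forbidden factor $\|v\|_{L^\infty}$. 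The fix is standard but must appear explicitly: keep the cancellation on the low-frequency part of $v$, e.g.\ split $v=S_1v+(\operatorname{Id}-S_1)v$ and treat $[S_1v\cdot\nabla,\Delta_j]f=[S_1v\cdot\nabla,\Delta_j]\widetilde\Delta_jf$ by the same first-order Taylor expansion of the kernel (yielding $2^{-j}\|\nabla v\|_{L^\infty}\|\nabla\widetilde\Delta_jf\|_{L^p}$), and only then run your block-by-block remainder estimate on $(\operatorname{Id}-S_1)v$, whose blocks all live in annuli. With that amendment the proof is complete.
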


The second commutator estimate we need involves the commutator of a paraproduct operator with a Fourier multiplier. It is contained in Lemma 2.99 of \cite{BCD},
but we report it here for the reader convenience.
\begin{lemma}\label{l:ParaComm}
Let $\psi$ be a smooth function on $\mathbb{R}^d$, which is homogeneous of degree $m$ away from a neighbourhood of $\,0$.
Then, for any vector field $v$ such that $\nabla v \in L^\infty$ and for any $s\in\R$, one has:
\begin{equation*}
\forall\, f \in B^s_{p,r}\,, \qquad\qquad \left\| \big[ \mathcal{T}_v, \psi(D) \big] f \right\|_{B^{s-m+1}_{p,r}}\, \lesssim\,
\|\nabla v\|_{L^\infty} \|f\|_{B^s_{p,r}}\,.
\end{equation*}
\end{lemma}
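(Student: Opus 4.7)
The plan is to exploit the spectral localization of the paraproduct and reduce the estimate to a blockwise bound. Writing
\[
[\mathcal{T}_v,\psi(D)]f \,=\, \sum_{j} [S_{j-1}v,\psi(D)]\Delta_j f\,,
\]
I first observe that each summand has Fourier support in an annulus of size $\sim 2^j$, since $S_{j-1}v$ is spectrally located in a ball of radius $\lesssim 2^{j-1}$, $\Delta_j f$ lives in $\{|\xi|\sim 2^j\}$, and the Fourier multiplier $\psi(D)$ preserves frequency supports. By almost orthogonality it will therefore suffice to prove the uniform blockwise bound
\[
\|[S_{j-1}v,\psi(D)]\Delta_j f\|_{L^p} \,\lesssim\, 2^{j(m-1)}\,\|\nabla v\|_{L^\infty}\,\|\Delta_j f\|_{L^p}\,;
\]
multiplication by $2^{j(s-m+1)}$ followed by the $\ell^r$ norm in $j$ will then produce exactly $\|\nabla v\|_{L^\infty}\,\|f\|_{B^s_{p,r}}$, which is the sought estimate.

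To establish the blockwise bound, I would fix a smooth cut-off $\tilde\varphi$, supported in an annulus $\{c\leq|\xi|\leq C\}$ away from the origin, and equal to $1$ on a slight enlargement of the support of $\varphi$. Because the relevant frequencies all lie in the region where $\psi$ is honestly homogeneous of degree $m$, I can replace $\psi(D)$ by the multiplier $\psi_j(D)$ with symbol $\psi(\xi)\,\tilde\varphi(2^{-j}\xi)$ when acting on $\Delta_j f$ and on products whose spectrum sits in this fattened annulus. Homogeneity then yields the scaling relation $\psi_j(\xi) = 2^{jm}\,h(\xi/2^j)$ with $h(\xi) := \psi(\xi)\,\tilde\varphi(\xi)\in \mathcal{S}(\R^d)$, so a change of variables shows that the kernel $K_j := \mathcal{F}^{-1}\psi_j$ satisfies $\bigl\||\cdot|\,K_j\bigr\|_{L^1} \lesssim 2^{j(m-1)}$. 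I then write
\[
[S_{j-1}v,\psi_j(D)]\Delta_j f\,(x) \,=\, \int K_j(x-y)\,\bigl(S_{j-1}v(x) - S_{j-1}v(y)\bigr)\,\Delta_j f(y)\,dy\,,
\]
and invoke the mean value theorem, together with the boundedness of $S_{j-1}$ on $W^{1,\infty}$, to bound the difference $|S_{j-1}v(x)-S_{j-1}v(y)|$ by $|x-y|\,\|\nabla v\|_{L^\infty}$. Young's convolution inequality, combined with the kernel bound above, then delivers the desired blockwise estimate.

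I expect no substantial difficulty beyond two routine bookkeeping points. The first is the treatment of a fixed finite set of low-frequency indices, where $\psi$ is not homogeneous: here one argues instead by the boundedness of $\psi(D)\,\Delta_j$ and $\Delta_j\,\psi(D)$ on $L^p$ for each such $j$ and absorbs these finitely many terms into the constant. The second is the almost orthogonality step, namely verifying that the $B^{s-m+1}_{p,r}$ norm of $\sum_j [S_{j-1}v,\psi(D)]\Delta_j f$ is indeed controlled (up to a harmless fixed constant) by the $\ell^r$ norm of the blockwise bounds; this is standard once the spectral localization of each summand has been identified. The real content of the lemma lies in the kernel estimate $\||\cdot|\,K_j\|_{L^1} \lesssim 2^{j(m-1)}$, which encodes both the homogeneity of $\psi$ (responsible for the factor $2^{jm}$) and the gain of one derivative coming from the commutator structure (responsible for the extra $2^{-j}$).
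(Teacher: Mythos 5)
Your argument is correct and is essentially the proof of Lemma 2.99 in Bahouri--Chemin--Danchin, which is exactly what the paper relies on (the paper does not reprove this lemma, it only cites that reference): decompose $[\mathcal T_v,\psi(D)]f=\sum_j[S_{j-1}v,\psi(D)]\Delta_jf$, use the spectral localisation of each block in a dyadic annulus, rescale $\psi$ by homogeneity to get the kernel bound $\||\cdot|K_j\|_{L^1}\lesssim 2^{j(m-1)}$, and conclude by the mean-value/Young commutator estimate. The only point to fix is your treatment of the finitely many low-frequency indices where $\psi$ is not homogeneous: bounding $\psi(D)\Delta_j$ on $L^p$ and ``absorbing into the constant'' would leave you with $\|S_{j-1}v\|_{L^\infty}$, which is not controlled by $\|\nabla v\|_{L^\infty}$; instead, for those $j$ you should keep the commutator structure and run the same kernel argument with $K_j=\mathcal F^{-1}\bigl(\psi\,\tilde\varphi(2^{-j}\cdot)\bigr)$, whose weighted $L^1$ norm is finite (a plain constant, no scaling needed) since this symbol is smooth and compactly supported.
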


The third commutator estimate, contained in the next statement, is used to control the damping term. Although this estimate seems classical,
we were not able to find a reference for it; so we briefly explain its proof in the Appendix.  

\begin{lemma}\label{com:damping}
Let $ (s,p,r) \in \mathbb{R} \times  [1,+\infty]^2 $ satisfying \eqref{cond:Lipschitz}.

Then, for any vector field $v$ on $\R^d$
such that $v\in B^{s-1}_{p,r} $ and any $ f \in L^{\infty}(\mathbb{R}^d)$ such that $ \nabla f \in B^{s-1}_{p,r} $, one has the estimate
\[
2^{js} \left\| \big[ f , \Delta_j \big] v  \right\|_{L^p} \,\lesssim\,
c_j\, \Big( \|f\|_{B^1_{\infty,1}} \, \| v \|_{B^ {s-1}_{p,r}} + \| \nabla f \|_{B^{s-1}_{p,r}}\,\| v \|_{L^\infty} \Big)\,, 
\]
for a suitable sequence $\big(c_j\big)_{j\geq -1}$ belonging to the unit ball of $\ell^r$. The (implicit) multiplicative constant intervening
in the previous inequality is ``universal'', namely it does not depend on the vector field $v$, nor on the function $f$.
\end{lemma}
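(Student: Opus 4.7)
The plan is to apply Bony's decomposition \eqref{eq:bony} twice, once to the product $fv$ and once to the product $f \cdot \Delta_j v$: subtracting and reorganising the resulting terms, one obtains the three-piece splitting
\begin{equation*}
[f, \Delta_j] v \,=\, [\mathcal{T}_f, \Delta_j] v \,+\, \bigl(\mathcal{T}_{\Delta_j v} f - \Delta_j \mathcal{T}_v f\bigr) \,+\, \bigl(\mathcal{R}(f, \Delta_j v) - \Delta_j \mathcal{R}(f, v)\bigr)\,,
\end{equation*}
whose three summands will be denoted by $I_1$, $I_2$, and $I_3$ respectively. Each piece is then estimated by standard spectral localisation arguments, keeping careful track of the exponential prefactor $2^{js}$ and of the $\ell^r$-summability of the remainder sequence.

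For $I_1$, I would write $[\mathcal{T}_f, \Delta_j] v = \sum_k [S_{k-1} f, \Delta_j] \Delta_k v$ and observe that only indices with $|k-j| \leq N$ (for some fixed $N \in \mathbb{N}$) contribute, due to the spectral support of $\Delta_k \Delta_j v$. For each such commutator, a Taylor expansion of the convolution kernel of $\Delta_j$ yields the pointwise bound $\|[S_{k-1} f, \Delta_j] \Delta_k v\|_{L^p} \lesssim 2^{-j} \|\nabla f\|_{L^\infty} \|\Delta_k v\|_{L^p}$. Summing over $k$ and using the embedding $\|\nabla f\|_{L^\infty} \lesssim \|f\|_{B^1_{\infty,1}}$ leads to the estimate $2^{js} \|I_1\|_{L^p} \lesssim c_j \|f\|_{B^1_{\infty,1}} \|v\|_{B^{s-1}_{p,r}}$. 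As an alternative, one may invoke directly Lemma \ref{l:ParaComm}.

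For $I_2$ and $I_3$, I would estimate the two summands in each difference \emph{separately}, since their spectral supports localise them near frequencies of order $2^j$. In the case of $I_2$, only indices $|k-j| \leq N$ contribute to $\Delta_j \mathcal{T}_v f$, whereas only $k \geq j-1$ contribute to $\mathcal{T}_{\Delta_j v} f$, because $S_{k-1}(\Delta_j v)$ vanishes for smaller indices. In both expressions the factor depending on $v$ is bounded by $\|v\|_{L^\infty}$, while Bernstein's inequality gives $\|\Delta_k f\|_{L^p} \lesssim 2^{-ks} d_k$ for a suitable sequence $(d_k)$ with $\|(d_k)\|_{\ell^r} \lesssim \|\nabla f\|_{B^{s-1}_{p,r}}$. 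A Young-type convolution estimate in $\ell^r$, which is valid as soon as $s>0$ (hence guaranteed by \eqref{cond:Lipschitz}), produces $2^{js} \|I_2\|_{L^p} \lesssim c_j \|\nabla f\|_{B^{s-1}_{p,r}} \|v\|_{L^\infty}$. The analysis of $I_3$ is similar: the remainder $\mathcal{R}(f, \Delta_j v)$ only involves indices $|m-j| \leq 2$, and the sharp pointwise bound $\|\Delta_m f\|_{L^\infty} \leq 2^{-m} \|f\|_{B^1_{\infty,1}}$ (which is immediate from the definition of the $B^1_{\infty,1}$ norm) yields $2^{js} \|\mathcal{R}(f, \Delta_j v)\|_{L^p} \lesssim c_j \|f\|_{B^1_{\infty,1}} \|v\|_{B^{s-1}_{p,r}}$; a parallel calculation, again through an $\ell^r$-convolution argument, produces the same bound for $\Delta_j \mathcal{R}(f, v)$. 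Summing the three contributions delivers the desired estimate.

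The main subtlety of the proof lies in the fact that $[f, \Delta_j] v$ is not a paradifferential commutator of the classical type: the operator here is multiplication by $f$, rather than an advection $v \cdot \nabla$ as in Lemma \ref{l:CommBCD}. Consequently, one must rely on the Bony splitting together with sharp spectral bookkeeping in order to make the two asymmetric combinations $\|f\|_{B^1_{\infty,1}} \|v\|_{B^{s-1}_{p,r}}$ (arising from $I_1$ and $I_3$) and $\|\nabla f\|_{B^{s-1}_{p,r}} \|v\|_{L^\infty}$ (arising from $I_2$) appear jointly on the right-hand side, rather than a single symmetric product. The universality of the implicit constant then follows from the fact that none of the intermediate estimates involves a choice depending on $f$ or $v$.
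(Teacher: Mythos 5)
Your proof is correct and follows essentially the same route as the paper's: the same Bony splitting of $[f,\Delta_j]v$ into the five pieces $[\mathcal T_f,\Delta_j]v$, $\mathcal T_{\Delta_jv}f$, $\Delta_j\mathcal T_vf$, $\mathcal R(f,\Delta_jv)$ and $\Delta_j\mathcal R(f,v)$, estimated by the same spectral-localisation arguments. The only (harmless) deviations are that you bound $\mathcal T_{\Delta_jv}f$ by $\|\nabla f\|_{B^{s-1}_{p,r}}\|v\|_{L^\infty}$ where the paper uses $\|\nabla f\|_{B^0_{\infty,1}}\|v\|_{B^{s-1}_{p,r}}$, and that your use of $\|\Delta_mf\|_{L^\infty}\leq 2^{-m}\|f\|_{B^1_{\infty,1}}$ in the remainder term avoids the extra $\|f\|_{L^\infty}\|v\|_{L^\infty}$ contribution that the paper then absorbs via the embedding $B^{s-1}_{p,r}\hookrightarrow L^\infty$.
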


\subsection{Elliptic equations with variable coefficients} \label{ss:elliptic}

Here, we turn our attention to the study of the following elliptic equation:
\begin{equation}\label{eq:elliptic}
-\,\div \left(\frac{1}{\rho}\,\nabla\Pi \right)\,=\,\div (F) \qquad\qquad\qquad \mbox{ in }\quad \R^d\,,
\end{equation}
where $ \rho = \rho (x) $ is a given regular bounded function satisfying 
\[ 
a_*\,:=\,\inf_{x\in\R^d} \frac{1}{\rho}\,>\,0\,.
\] 
We shall use  the following  result,
based on Lax-Milgram's theorem (this is Lemma 2 in \cite{D1}). 
\begin{lemma}\label{l:laxmilgram}
For all vector field $F$ with coefficients in $L^2$, there exists a tempered distribution $\Pi$,
unique up to  constant functions, such that  $\nabla\Pi\in L^2$ and  
equation \eqref{eq:elliptic} is satisfied. 
In addition, we have 
$$
\|\nabla\Pi\|_{L^2}\,\leq\, \frac{1}{a_*}\, \|F\|_{L^2}\,.
$$
\end{lemma}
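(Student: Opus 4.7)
The plan is to apply the Lax--Milgram theorem in a suitable Hilbert space setting, which is by now a rather standard strategy; the main point is to select the right functional framework so that the uniqueness statement ``up to constants'' makes sense and the coercivity constant is exactly $a_*$.

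First, I would introduce the space $H:=\{\Pi\in\mathcal{S}'(\R^d)\,:\,\nabla\Pi\in L^2\}/\R$ of tempered distributions with gradient in $L^2$, identified modulo additive constants, equipped with the inner product $(\Pi,\varphi)_H:=\int_{\R^d}\nabla\Pi\cdot\nabla\varphi\,dx$. A short verification (using that $\nabla:H\to L^2(\R^d;\R^d)$ is an isometry onto its closed image, namely the space of curl-free $L^2$ vector fields) shows that $H$ is a Hilbert space. Next, I would introduce the bilinear and linear forms
\[
a(\Pi,\varphi)\,:=\,\int_{\R^d}\frac{1}{\rho}\,\nabla\Pi\cdot\nabla\varphi\,dx\,,\qquad L(\varphi)\,:=\,-\int_{\R^d}F\cdot\nabla\varphi\,dx\,,
\]
which are defined on $H\times H$ and $H$ respectively.

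Continuity of $a$ follows from the bound $1/\rho\in L^\infty$, implying $|a(\Pi,\varphi)|\leq \|1/\rho\|_{L^\infty}\|\nabla\Pi\|_{L^2}\|\nabla\varphi\|_{L^2}$; continuity of $L$ is immediate from Cauchy--Schwarz, yielding $|L(\varphi)|\leq \|F\|_{L^2}\|\nabla\varphi\|_{L^2}$. Coercivity is where the hypothesis $a_*>0$ enters: one has $a(\Pi,\Pi)\geq a_*\|\nabla\Pi\|_{L^2}^2$ by definition of $a_*$. The Lax--Milgram theorem then yields a unique $\Pi\in H$ (that is, unique up to an additive constant) with $a(\Pi,\varphi)=L(\varphi)$ for every $\varphi\in H$. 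Testing against compactly supported smooth functions shows that the PDE \eqref{eq:elliptic} is satisfied in the sense of distributions.

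Finally, the quantitative bound comes from testing the variational identity against $\Pi$ itself: combining coercivity with the continuity of $L$, one gets
\[
a_*\,\|\nabla\Pi\|_{L^2}^2\,\leq\,a(\Pi,\Pi)\,=\,L(\Pi)\,\leq\,\|F\|_{L^2}\,\|\nabla\Pi\|_{L^2}\,,
\]
which after division gives the desired estimate $\|\nabla\Pi\|_{L^2}\leq (1/a_*)\|F\|_{L^2}$. The only genuine subtlety of the argument is the verification that $H$ is a Hilbert space on $\R^d$ when working with tempered distributions modulo constants; everything else is a routine application of Lax--Milgram. Since the statement precisely invokes that theorem, this subtlety is essentially taken care of by construction, and the proof reduces to a few lines.
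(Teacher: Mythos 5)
Your proof is correct and follows exactly the standard route: the paper does not prove this lemma itself but cites Lemma 2 of \cite{D1}, whose proof is precisely this Lax--Milgram argument in the space of tempered distributions with $L^2$ gradient, modulo constants, with coercivity constant $a_*$ and the weak formulation $\int \rho^{-1}\nabla\Pi\cdot\nabla\varphi = -\int F\cdot\nabla\varphi$ yielding the stated bound. No gaps.
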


Next, we discuss how to derive higher regularity estimates for equation \eqref{eq:elliptic} in Besov spaces. Propagation of higher regularity for the pressure
for the non-homogeneous incompressible Euler system \eqref{eq:dd-E} (actually, in the case $\alpha=0$)
has been obtained in \cite{D1, D:F, F_2012}. More precisely, one can prove that
\begin{equation}
\label{gen:pres:ine}
\|\nabla \Pi \|_{B^{s}_{p,r}}\, \lesssim\,\left( 1 +  \| \nabla \rho  \|_{B^{s-1}_{p,r}}^{\eta} \right)\,\|F\|_{L^2}\,+\,\left\| \rho\,\div(F)\right\|_{B^{s-1}_{p,r}}\,.
\end{equation}
However, in view of applications to the continuation criterion, which will play a key role in our analysis,
we have to use more precise estimates, in the same spirit of the bounds exhibited in \cite{F-GB-S, F-V} for a different (yet related) non-homogeneous fluid system.

Let us collect the needed estimates in the next statement, whose proof is postponed to Appendix \ref{a:app}.
It goes without saying that those estimates, together with an interpolation argument, imply \eqref{gen:pres:ine} above.

\begin{lemma}
\label{lem:pres:est:Besov}
Fix indices $(s,p,r)\in \R\times[1,+\infty]\times[1,+\infty]$ such that $p\geq2$ and condition \eqref{cond:Lipschitz} is satisfied. Then,
there exists an exponent $\eta>0$, only depending on the triplet $(s,p,r)$, such that the solution to equation \eqref{eq:elliptic} satisfies
the following estimate:
\begin{equation*}
\|\nabla \Pi \|_{B^{s}_{p,r}}\, \lesssim\,\left( 1 +  \| \nabla \rho  \|_{L^\infty}^{\eta} \right)\,\|F\|_{L^2}\,+\,
 \| \nabla \rho  \|_{B^{s-1}_{p,r}}\,\left\|\nabla\Pi\right\|_{L^\infty}\,+\, \left\| \rho\,  \div(F)  \right\|_{B^{s-1}_{p,r}}\,,
\end{equation*}
where the (implicit) multiplicative constant only depends on $ a_{*} $ and $ a^{*}\,:=\,\sup_{x\in\R^d}1/\rho(x) $.
\end{lemma}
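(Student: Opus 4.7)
\textbf{Plan of proof for Lemma \ref{lem:pres:est:Besov}.} The strategy is to rewrite the elliptic equation \eqref{eq:elliptic} in a ``Laplacian + variable coefficient correction'' form, then perform a Littlewood-Paley analysis combined with paradifferential calculus and an interpolation argument to absorb a problematic term into the left-hand side.

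\textbf{Step 1: Rewriting and low frequencies.} First, I would expand the divergence in \eqref{eq:elliptic} and multiply by $\rho$ to obtain the equivalent form
\[
\Delta\Pi\,=\,\frac{\nabla\rho}{\rho}\cdot\nabla\Pi\,-\,\rho\,\div(F)\,.
\]
For the low-frequency block $\Delta_{-1}\nabla\Pi$, Bernstein's inequality (Lemma \ref{l:bern}) gives $\|\Delta_{-1}\nabla\Pi\|_{L^p}\lesssim \|\nabla\Pi\|_{L^2}$, since $p\geq 2$, and Lemma \ref{l:laxmilgram} bounds the right-hand side by $a_*^{-1}\|F\|_{L^2}$.

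\textbf{Step 2: High frequencies and Bernstein inversion.} For $j\geq 0$, applying $\Delta_j$ to the rewritten equation and using the second Bernstein inequality (Lemma \ref{l:bern}) to invert the Laplacian, one gets
\[
2^{js}\,\|\Delta_j\nabla\Pi\|_{L^p}\,\lesssim\,2^{j(s-1)}\left\|\Delta_j\!\left(\tfrac{\nabla\rho}{\rho}\cdot\nabla\Pi\right)\right\|_{L^p}\,+\,2^{j(s-1)}\,\|\Delta_j(\rho\,\div F)\|_{L^p}\,.
\]
Taking the $\ell^r$-norm over $j\geq 0$ and adding the low-frequency contribution yields
\[
\|\nabla\Pi\|_{B^s_{p,r}}\,\lesssim\,\|F\|_{L^2}\,+\,\left\|\tfrac{\nabla\rho}{\rho}\cdot\nabla\Pi\right\|_{B^{s-1}_{p,r}}\,+\,\|\rho\,\div F\|_{B^{s-1}_{p,r}}\,.
\]

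\textbf{Step 3: Paraproduct decomposition.} To handle the mixed term, I would apply Bony's decomposition \eqref{eq:bony}:
\[
\tfrac{\nabla\rho}{\rho}\cdot\nabla\Pi\,=\,\mathcal T_{\nabla\rho/\rho}\nabla\Pi\,+\,\mathcal T_{\nabla\Pi}\tfrac{\nabla\rho}{\rho}\,+\,\mathcal R\!\left(\tfrac{\nabla\rho}{\rho},\nabla\Pi\right)\,.
\]
By Proposition \ref{p:op}, the middle paraproduct gives precisely the target contribution $\|\nabla\Pi\|_{L^\infty}\,\|\nabla\rho/\rho\|_{B^{s-1}_{p,r}}$, while the first paraproduct and the remainder are bounded (using condition \eqref{cond:Lipschitz}, which ensures $B^{s-1}_{p,r}\hookrightarrow L^\infty$, so $s-1\geq 0$ with the appropriate endpoint case) by $\|\nabla\rho/\rho\|_{L^\infty}\,\|\nabla\Pi\|_{B^{s-1}_{p,r}}$. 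Proposition \ref{p:comp} applied to $F(\rho)=\log\rho$ allows me to replace $\nabla\rho/\rho$-norms by $\nabla\rho$-norms (using the absence of vacuum \eqref{eq:vacuum} to localize $\rho$ in a compact subset of $\R_+^*$). Similarly, the tame estimate (Corollary \ref{c:tame}) and the uniform bound on $\rho$ handle $\|\rho\,\div F\|_{B^{s-1}_{p,r}}$.

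\textbf{Step 4: Absorbing via Gagliardo-Nirenberg interpolation.} At this point, the estimate reads
\[
\|\nabla\Pi\|_{B^s_{p,r}}\,\lesssim\,\|F\|_{L^2}\,+\,\|\nabla\rho\|_{L^\infty}\,\|\nabla\Pi\|_{B^{s-1}_{p,r}}\,+\,\|\nabla\rho\|_{B^{s-1}_{p,r}}\,\|\nabla\Pi\|_{L^\infty}\,+\,\|\rho\,\div F\|_{B^{s-1}_{p,r}}\,,
\]
and the delicate point, which is the main obstacle of the proof, is the term $\|\nabla\rho\|_{L^\infty}\,\|\nabla\Pi\|_{B^{s-1}_{p,r}}$: it carries the very norm we are estimating (albeit at a lower regularity). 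The plan is to interpolate
\[
\|\nabla\Pi\|_{B^{s-1}_{p,r}}\,\lesssim\,\|\nabla\Pi\|_{L^2}^{\theta}\,\|\nabla\Pi\|_{B^s_{p,r}}^{1-\theta}
\]
for a suitable $\theta\in\,]0,1[\,$ depending only on $(s,p,r)$, then apply Young's inequality to split the product into $\varepsilon\,\|\nabla\Pi\|_{B^s_{p,r}}+C_\varepsilon\,\|\nabla\rho\|_{L^\infty}^{1/\theta}\,\|\nabla\Pi\|_{L^2}$. Choosing $\varepsilon$ small enough to absorb the first piece into the left-hand side, and bounding $\|\nabla\Pi\|_{L^2}\lesssim \|F\|_{L^2}$ by Lemma \ref{l:laxmilgram}, yields the announced inequality with $\eta:=1/\theta$. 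The dependence of the implicit constant on $a_*$ and $a^*$ comes from Lemma \ref{l:laxmilgram} and from the uniform bounds on $\rho$ used throughout.
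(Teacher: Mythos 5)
Your overall strategy --- rewriting the equation as $\Delta\Pi=\nabla\log\rho\cdot\nabla\Pi-\rho\,\div F$, splitting $\nabla\Pi$ into low and high frequencies (using $p\geq2$ and Lemma \ref{l:laxmilgram} for the low block), decomposing the mixed term by Bony's paraproduct, and absorbing the lower-order Besov norm of $\nabla\Pi$ via interpolation with $L^2$ plus Young's inequality --- is exactly the route the paper takes, and Steps 1, 2 and 4 are sound. The gap is in Step 3, precisely at the endpoint case $(s,p,r)=(1,\infty,1)$, which is the case the paper actually uses and singles out as the delicate one. There $s-1=0$, and your claimed bound
\[
\left\|\mathcal R\!\left(\tfrac{\nabla\rho}{\rho},\nabla\Pi\right)\right\|_{B^{0}_{\infty,1}}\,\lesssim\,\left\|\tfrac{\nabla\rho}{\rho}\right\|_{L^\infty}\,\left\|\nabla\Pi\right\|_{B^{0}_{\infty,1}}
\]
is not available: taking $\nabla\rho/\rho\in L^\infty\hookrightarrow B^0_{\infty,\infty}$ and $\nabla\Pi\in B^0_{\infty,1}$, the regularity indices sum to $0$, and Proposition \ref{p:op} only yields $\mathcal R\in B^0_{\infty,\infty}$, not $B^0_{\infty,1}$ --- the space $B^0_{\infty,1}$ is not stable under multiplication by bounded functions, as the paper repeatedly stresses. (For $s>1$, in particular whenever $p<+\infty$ since then $s\geq1+d/p>1$, your estimate is correct because $s_1+s_2=s-1>0$.)

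The repair is the very mechanism you already use in Step 4, with the derivative placed slightly differently: in the endpoint case, bound the first paraproduct and the remainder by $\|\nabla(\rho-1)\|_{L^\infty}\,\|\nabla\Pi\|_{B^{1/2}_{\infty,1}}$ (this is inequality \eqref{algebra:B^0}, which trades the failure of the algebra property for half a derivative on one factor), and only then interpolate $\|\nabla\Pi\|_{B^{1/2}_{\infty,1}}\lesssim\|\nabla\Pi\|_{L^2}^{\de}\,\|\nabla\Pi\|_{B^{1}_{\infty,1}}^{1-\de}$ before applying Young and Lemma \ref{l:laxmilgram}. This is what the paper does; it only changes the value of the exponent (the paper gets $\eta=1+1/\de$ instead of your $1/\theta$), which is immaterial for the statement.
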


\subsection{Transport estimates in Besov spaces} \label{ss:tools-est}

System \eqref{eq:dd-E} has essentially the structure of a coupling of transport equations, at least under condition \eqref{eq:vacuum}.
As a matter of fact, in our study we will make a great use of \tsl{a priori} estimates in Besov spaces for smooth solutions to linear transport equations,
which we collect in this subsection.
Most of the material presented here can be found in Chapter 3 of \cite{BCD}. 

Let us consider the initial value problem
\begin{equation}\label{eq:TV}
\left\{\begin{array}{l}
\partial_t f\, +\, v \cdot \nabla f\, = \,g \\[1.5ex]
f_{|t = 0}\, =\, f_0\,.
\end{array}\right.
\end{equation}
Throughout this part, the velocity field $v=v(t,x)$ will always be assumed to be divergence-free, \tsl{i.e.} $\div v = 0$,
and Lipschitz continuous with respect to the space variable.

The following statement contains the conclusions of Theorems 3.14 and 3.19 of \cite{BCD}, in the case when the transport field $v$ is Lipschitz-continuous.
\begin{thm}\label{th:transport}
Let $(s,p,r)\in\R\times[1,+\infty]^2$ satisfy the Lipschitz condition \eqref{cond:Lipschitz} and $T>0$ be fixed.
Assume that $v$ is a divergence-free vector field belonging to $L^1\big([0,T];B^s_{p,r}\big)$ such that, for some
$q > 1$ and $M > 0$, $v \in L^q\big([0,T];B^{-M}_{\infty, \infty}\big)$.
Finally, let $\s\in\R$ be such that
\[
\s\,\geq\,-\,d\,\min\left\{\frac{1}{p}\,,\,\frac{1}{p'}\right\}\qquad\qquad \mbox{ or, \ \ \ \ \  if }\ \div v=0\,,\quad 
\s\,\geq\,-\,1\,-\,d\,\min\left\{\frac{1}{p}\,,\,\frac{1}{p'}\right\}\,.
\]

Then, for any external force $g \in L^1\big([0,T];B^\s_{p,r}\big)$ and any initial datum $f_0 \in B^\s_{p,r}$, the transport equation \eqref{eq:TV} has a unique solution $f$ in the space:
\begin{itemize}
\item $\mc C\big([0,T];B^\s_{p,r}\big)$ if $r < +\infty$;
\item $\left( \bigcap_{\s'<\s} \mc C\big([0,T];B^{\s'}_{p,\infty}\big) \right) \cap \mc C_{\weak}\big([0,T];B^\s_{p, \infty}\big)$, in the case $r = +\infty$.
\end{itemize}
Moreover, after defining the function $V(t)$ as
\[
V(t)\,:=\,\int^t_0\left\|\nabla v(\t)\right\|_{B^{s-1}_{p,r}}\,\dd\t\,,
\]
the unique solution $f$ satisfies the following estimate, for a suitable universal constant $C>0$:
\begin{equation*} 
\forall\,t\in[0,T]\,,\qquad
\| f(t) \|_{B^\s_{p,r}}\, \leq\, e^{C\,V(t)}\,\left(\| f_0 \|_{B^\s_{p,r}} + \int_0^t e^{-C\,V(\tau)}\, \| g(\t) \|_{B^\s_{p,r}} \dd\t\right)\,.
\end{equation*}
In the case when $v=f$, the previous estimate holds true with $V'(t)\,=\,\left\|\nabla f(t)\right\|_{L^\infty}$.
\end{thm}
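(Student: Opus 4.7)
The strategy is the classical one for transport equations in Besov spaces: localise in frequency via Littlewood--Paley, derive an $L^p$ estimate at each dyadic scale, control the resulting commutator in $B^\s_{p,r}$ by Bony's decomposition, and conclude by a Gronwall argument. Existence and uniqueness are then obtained by regularisation and passage to the limit.

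Applying $\Delta_j$ to \eqref{eq:TV}, one obtains
\begin{equation*}
\d_t\Delta_j f\,+\,v\cdot\nabla\Delta_j f\,=\,\Delta_j g\,+\,R_j\,,\qquad R_j\,:=\,\big[v\cdot\nabla,\Delta_j\big]f\,.
\end{equation*}
Since $\div v=0$, a standard $L^p$ energy estimate (multiplication by $|\Delta_j f|^{p-2}\Delta_j f$ and integration, with the usual adaptation for $p=+\infty$) makes the transport term vanish and gives
\begin{equation*}
\left\|\Delta_j f(t)\right\|_{L^p}\,\leq\,\left\|\Delta_j f_0\right\|_{L^p}\,+\,\int_0^t\Big(\|\Delta_j g(\tau)\|_{L^p}+\|R_j(\tau)\|_{L^p}\Big)\,\dd\tau\,.
\end{equation*}
The core of the proof is then a commutator estimate of the form $2^{j\s}\|R_j(t)\|_{L^p}\,\leq\, C\,c_j(t)\,\|\nabla v(t)\|_{B^{s-1}_{p,r}}\,\|f(t)\|_{B^\s_{p,r}}$, with $\big(c_j(t)\big)_j$ in the closed unit ball of $\ell^r$ for almost every $t$. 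This bound is obtained by expanding $R_j$ through Bony's formula \eqref{eq:bony} into paraproduct pieces $\mc T_v\nabla\Delta_j f-\Delta_j \mc T_v\nabla f$, etc., and a remainder term, and by applying the continuity properties collected in Proposition \ref{p:op}; the lower bounds imposed on $\s$ are precisely those making the contribution of $\mc R(v,\nabla f)$ meaningful, and the improved threshold in the divergence-free case comes from the identity $v\cdot\nabla\varphi=\div(v\,\varphi)$, which saves one derivative.

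Multiplying the previous $L^p$ estimate by $2^{j\s}$, taking the $\ell^r$ norm in $j$, and inserting the commutator bound, one arrives at
\begin{equation*}
\|f(t)\|_{B^\s_{p,r}}\,\leq\,\|f_0\|_{B^\s_{p,r}}\,+\,\int_0^t\|g(\tau)\|_{B^\s_{p,r}}\,\dd\tau\,+\,C\int_0^t V'(\tau)\,\|f(\tau)\|_{B^\s_{p,r}}\,\dd\tau\,,
\end{equation*}
and Gronwall's lemma yields the announced inequality. Uniqueness is then immediate by applying this bound to the difference of two solutions (with null data and null source). Existence follows from smoothing $(v,f_0,g)$ via convolution, solving the resulting regular Cauchy problem by the method of characteristics, and passing to the limit thanks to the uniform bound just established; strong continuity in time for $r<+\infty$ is a consequence of the density of smooth functions in $B^\s_{p,r}$, while in the case $r=+\infty$ only weak-$*$ continuity at the target level $B^\s_{p,\infty}$ is available (together with strong continuity at any strictly lower regularity $\s'<\s$). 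Finally, the special case $v=f$ relies on a sharper commutator estimate in which only $\|\nabla f\|_{L^\infty}$ appears on the right-hand side, obtained by exploiting the extra cancellation allowed by the identification of the transport field with the unknown.

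The main obstacle is the commutator estimate at low regularity $\s$: the range $s>0$ covered by Lemma \ref{l:CommBCD} is not sufficient, and one needs a more careful paradifferential analysis of the remainder term $\mc R(v,\nabla f)$, including the use of the divergence-free condition on $v$, in order to reach the full negative range of admissible indices $\s$ stated in the theorem.
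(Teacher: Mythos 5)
Your outline is correct and reproduces the standard argument of Theorems 3.14 and 3.19 in \cite{BCD}, which is exactly what the paper does: Theorem \ref{th:transport} is quoted there without proof, with an explicit reference to those results. The key ingredients you identify (dyadic localisation, the $L^p$ estimate using $\div v=0$, the commutator bound of Lemma 2.100 of \cite{BCD} valid down to the negative thresholds on $\s$ thanks to the divergence-free structure, Gr\"onwall, and regularisation for existence) are precisely those of the cited proof, including the sharpening to $V'(t)=\|\nabla f(t)\|_{L^\infty}$ when $v=f$.
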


Next, let us recall improved transport estimates for Besov spaces whose index of regularity is $\s=0$. Those estimates, discovered by Vishik in \cite{Vis}
and later proved again \tsl{via} a different argument by Hmidi and Keraani in \cite{HK}, establish that the $B^0_{p,r}$ norm of the solution
grows \emph{linearly}, and not exponentially as in Theorem \ref{th:transport}, with the Lipschitz norm of the transport field $v$.
We refer to Theorem 3.18 of \cite{BCD} for the statement presented here.
\begin{thm}\label{th:B^0}
Assume that the velocity field $v$ satisfies $\nabla v \in L^1_T(L^\infty)$ and $\div v=0$. Let $(p,r) \in [1, +\infty]^2$.

Then there exists a constant $C = C(d)>0$, only depending on the space dimension $d\geq2$, such that, for any solution $f$ to problem \eqref{eq:TV} in
$\mc C\big([0,T];B^0_{p,r}\big)$, or in $\mc C_w\big([0,T];B^0_{p,\infty}\big)$ if $r=+\infty$, we have
\[ 
\| f \|_{L^\infty_T(B^0_{p, r})}\, \leq\, C\, \left( \| f_0 \|_{B^0_{p, r}}\, +\, \| g \|_{L^1_T(B^0_{p, r})}\right)\;
\left( 1+\int_0^T\| \nabla v(\tau) \|_{L^\infty}\,{\rm d} \tau \right)\,.
\] 
\end{thm}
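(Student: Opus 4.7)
The plan is to localize the transport equation in frequency and then exploit the divergence-free assumption together with a refined analysis of the commutator $[v\cdot\nabla,\Delta_j]$. The key point is to replace the exponential Gr\"onwall factor (inevitable for $s>0$, see Theorem \ref{th:transport}) by a linear one in the endpoint case $s=0$.

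First, apply $\Delta_j$ to the equation \eqref{eq:TV} to obtain, for $f_j := \Delta_j f$ and $g_j := \Delta_j g$,
\begin{equation*}
\partial_t f_j \,+\, v \cdot \nabla f_j \,=\, g_j \,+\, R_j\,, \qquad R_j \,:=\, [v\cdot\nabla, \Delta_j]f\,.
\end{equation*}
Since $\div v = 0$, the associated Lagrangian flow $\psi_t$ is measure-preserving, hence composition with $\psi_t$ is an $L^p$ isometry. The standard block-wise $L^p$ transport estimate then yields
\begin{equation*}
\|f_j(t)\|_{L^p} \,\leq\, \|\Delta_j f_0\|_{L^p} \,+\, \int_0^t \|g_j(\tau)\|_{L^p}\, d\tau \,+\, \int_0^t \|R_j(\tau)\|_{L^p}\, d\tau\,,
\end{equation*}
without any multiplicative factor. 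This is the advantage of working at the level of dyadic blocks directly in $L^p$: the divergence-free condition costs nothing, whereas assembling the $B^0_{p,r}$ norm from the start would force one to commute $\Delta_j$ with the whole transport operator in a less efficient way.

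The main obstacle is to control the sequence $\big(\|R_j\|_{L^p}\big)_j$ in a way that avoids Gr\"onwall. A naive application of a Lemma \ref{l:CommBCD}-type estimate (adapted to $s=0$) would give $\|R_j(\tau)\|_{L^p} \lesssim c_j(\tau)\,\|\nabla v(\tau)\|_{L^\infty}\,\|f(\tau)\|_{B^0_{p,r}}$ for some sequence $(c_j(\tau))_j$ in the unit ball of $\ell^r$; summing in $\ell^r$ and invoking Gr\"onwall would then produce a factor $\exp\big(C\int_0^T \|\nabla v\|_{L^\infty}\big)$, which is exactly what the statement forbids. To bypass this, decompose $v\cdot\nabla f$ via Bony's paraproduct \eqref{eq:bony}, analyse the three resulting commutators $[\mathcal{T}_v\nabla,\Delta_j]$, $[\mathcal{T}_{\nabla f}v,\Delta_j]$ and $[\mathcal{R}(v,\nabla f),\Delta_j]$ separately, and use the near-diagonality of the dyadic spectra together with Bernstein estimates (trading frequency for derivatives). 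The outcome is a refined bound on $\|R_j\|_{L^p}$ which, after summation in $\ell^r$, makes $\|\nabla v\|_{L^1_T(L^\infty)}$ appear as a multiplicative prefactor rather than through an exponential.

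The combinatorial bookkeeping underlying this final step is the genuinely delicate part of the argument, carried out first by Vishik \cite{Vis} and subsequently streamlined by Hmidi-Keraani \cite{HK}; the cleanest presentation in the precise form needed here is Theorem 3.18 of \cite{BCD}. Inserting the refined commutator bound into the block-wise inequality above, summing in $\ell^r$ and taking $\sup_{t\in[0,T]}$ yields exactly the product form $\big(\|f_0\|_{B^0_{p,r}} + \|g\|_{L^1_T(B^0_{p,r})}\big) \cdot \big(1 + \|\nabla v\|_{L^1_T(L^\infty)}\big)$. Existence and continuity of the solution in $\mathcal{C}([0,T];B^0_{p,r})$ (or $\mathcal{C}_w([0,T];B^0_{p,\infty})$ when $r=+\infty$) are then inherited from the already established theory of Theorem \ref{th:transport} via the usual regularisation/approximation procedure.
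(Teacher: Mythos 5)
The paper does not actually prove this statement: it is quoted verbatim from Theorem 3.18 of \cite{BCD} (going back to \cite{Vis} and \cite{HK}), so deferring the hard step to those references is acceptable in principle. The difficulty is that the mechanism you sketch is not the one used there, and as described it cannot deliver the linear factor. If you frequency-localise the \emph{solution}, estimate $\|\Delta_jf(t)\|_{L^p}$ blockwise and control $R_j=[v\cdot\nabla,\Delta_j]f$, then however sharp the commutator bound is, the commutator involves all frequencies of $f$ and not just the data, so after summation in $\ell^r$ you are left with an inequality of the form
\[
\|f(t)\|_{B^0_{p,r}}\,\leq\,\|f_0\|_{B^0_{p,r}}\,+\,\|g\|_{L^1_t(B^0_{p,r})}\,+\,C\int_0^t\|\nabla v(\tau)\|_{L^\infty}\,\|f(\tau)\|_{B^0_{p,r}}\,\dd\tau\,,
\]
and Gr\"onwall applied to such an inequality yields an exponential; this is sharp at the level of the integral inequality, so no ``refined bound on $\|R_j\|_{L^p}$'' can convert the exponential into the prefactor $1+\int_0^T\|\nabla v\|_{L^\infty}$ while keeping $\|f\|_{B^0_{p,r}}$ under the integral. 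The sentence asserting that the linear prefactor ``appears after summation'' is precisely the missing step, and it is not a matter of combinatorial bookkeeping on the commutator.

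The actual argument of Hmidi--Keraani decomposes the \emph{data} instead: write $f=\sum_kf_k$, where $f_k$ solves the transport equation with datum $\Delta_kf_0$ and forcing $\Delta_kg$. The divergence-free $L^p$ estimate gives $\|\Delta_jf_k(t)\|_{L^p}\leq\|\Delta_kf_0\|_{L^p}+\int_0^t\|\Delta_kg\|_{L^p}\,\dd\tau$ with no loss, while the standard estimates of Theorem \ref{th:transport} in $B^{\pm\veps}_{p,\infty}$ give $\|\Delta_jf_k(t)\|_{L^p}\lesssim2^{-\veps|j-k|}\,e^{CV(t)}\big(\|\Delta_kf_0\|_{L^p}+\cdots\big)$, where $V(t)=\int_0^t\|\nabla v\|_{L^\infty}$. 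Splitting $\sum_k\|\Delta_jf_k\|_{L^p}$ at $|j-k|=N$, the near-diagonal part costs a factor $N$ and the far part a factor $2^{-\veps N}e^{CV(T)}$; choosing $N\sim V(T)$ converts the exponential into the linear factor $1+V(T)$. This optimisation over the splitting parameter, entirely absent from your proposal, is the heart of the proof (Vishik's original route is different again, resting on the fact that composition with a measure-preserving Lipschitz flow is bounded on $B^0_{p,r}$ with norm growing only logarithmically in the Lipschitz constants). Your sketch should be rewritten around this data decomposition rather than around a commutator estimate.
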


Theorem \ref{th:B^0} plays a key role in the proof of Theorem \ref{th:g=1_2}, namely global existence when $\g=1$ and $d=2$, with a smallness condition
on the Besov norm of the non-homogeneity only.

\section{Local well-posedness} \label{s:local}

In this section, as a first step towards the proof of the global existence results,
we investigate local well-posedness of the Cauchy problem \eqref{eq:dd-E}-\eqref{eq:in-datum}, together with the validity of a suitable continuation criterion.
We state our main results of this part (see Proposition \ref{p:local} and Lemma \ref{lem:cont:pri}) for $\alpha\geq0$, but we remark that they actually hold true
for any $\alpha\in\R$. Indeed, the term $\alpha \rho^\g u$ is treated as a forcing term in the arguments below, so its sign does not play any role.

To begin with, we state the following local existence and uniqueness result.

\begin{prop} \label{p:local}
Let $\alpha \geq 0$ and $\g\in\{0,1\}$. Fix indices $(s,p,r)\in\R\times[1,+\infty]\times[1,+\infty]$ such that $p>1$ and one of the two conditions in \eqref{cond:Lipschitz}
is verified. Take an initial datum $\big(\rho_0,u_0\big)$ such that conditions \eqref{eq:in-datum} and \eqref{eq:vacuum} are satisfied, for two suitable constants
$0<\rho_*\leq\rho^*$. Assume in addition that $\nabla\rho_0\in B^{s-1}_{p,r}$ and that $u_0\in L^2\cap B^s_{p,r}$.

Then, there exist a positive time $T>0$ and a unique solution $\big(\rho,u,\nabla\Pi\big)$ to equations \eqref{eq:dd-E}-\eqref{eq:in-datum}-\eqref{eq:vacuum}
on $[0,T]\times\R^d$ such that:
\begin{enumerate}[(i)]
 \item $\rho\in \mc C_b\big([0,T]\times\R^d\big)$, with $\rho_*\leq\rho(t)\leq\rho^*$ for any $t\in[0,T]$ and $\nabla\rho\in\mc C_{\weak}\big([0,T];B^{s-1}_{p,r}\big)$;
 \item $u\in \mc C^1\big([0,T];L^2\big)\,\cap\,\mc C_\weak\big([0,T];B^{s}_{p,r}\big)$;
 \item $\nabla \Pi\in \mc C\big([0,T];L^2\big)\,\cap\,\mc C_\weak\big([0,T];B^{s}_{p,r}\big)$.
\end{enumerate}
The time continuity with values in Besov spaces holds with respect to the strong topology if $r<+\infty$.
\end{prop}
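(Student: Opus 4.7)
The plan is to follow the classical Friedrichs/iterative construction of Danchin and Fanelli in \cite{D1, D:F}, upgraded to accommodate the damping term $\alpha\rho^\gamma u$. Setting $(\rho^0,u^0):=(\rho_0,u_0)$, I would build a sequence $(\rho^{n+1},u^{n+1},\nabla\Pi^{n+1})$ by solving at each step two linear problems: the transport equation $\partial_t\rho^{n+1}+u^n\cdot\nabla\rho^{n+1}=0$ with datum $\rho_0$, and the linear damped momentum equation $\rho^{n+1}(\partial_t u^{n+1}+u^n\cdot\nabla u^{n+1})+\nabla\Pi^{n+1}+\alpha(\rho^{n+1})^\gamma u^{n+1}=0$, where $\nabla\Pi^{n+1}$ is determined by Lemma \ref{l:laxmilgram} so as to enforce $\div u^{n+1}=0$. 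Global existence of each iterate is standard by linearity; note also that $\rho^{n+1}$ inherits the bounds \eqref{eq:vacuum} by pure transport.

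First I would derive uniform bounds on a small interval $[0,T_*]$. For the density, Theorem \ref{th:transport} and Proposition \ref{p:comp} give control of $\nabla\rho^{n+1}$ in $L^\infty_{T_*}(B^{s-1}_{p,r})$ and of $1/\rho^{n+1}$ in the same space, with exponential dependence on $\int_0^{T_*}\|\nabla u^n\|_{B^{s-1}_{p,r}}\dt$. For the velocity, a straightforward $L^2$ energy estimate (where the damping term contributes a nonnegative quantity) controls $\|u^{n+1}\|_{L^\infty_{T_*}(L^2)}$; combining Theorem \ref{th:transport} applied to the equation for $u^{n+1}$ (after division by $\rho^{n+1}$), Lemma \ref{lem:pres:est:Besov} for the pressure gradient $\nabla\Pi^{n+1}$ with $F=u^n\cdot\nabla u^{n+1}+\alpha(\rho^{n+1})^{\gamma-1}u^{n+1}$, and the tame estimate \eqref{alg:prop:2} to handle the damping term $\alpha\rho^{\gamma-1}u$ then yields a bound in $L^\infty_{T_*}(B^s_{p,r})$. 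Choosing $T_*$ sufficiently small (in terms of the norms of the initial datum, of $\rho_*,\rho^*$ and of $\alpha$) closes the induction.

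Next, I would prove convergence of the scheme by showing that $(\rho^{n+1}-\rho^n,u^{n+1}-u^n)$ is Cauchy in the weaker space $\mathcal C([0,T_*];L^2\times B^{s-1}_{p,r})$: subtract the equations at consecutive levels and use $L^2$-type estimates together with the uniform bounds already obtained. Passing to the limit (strong in the low-regularity norms, weak-$*$ in the high-regularity ones) yields a solution with the claimed regularity, and uniqueness is established analogously by a stability estimate in the same lower regularity. Strong time continuity when $r<+\infty$ is obtained from the regularity of the time derivatives via the equations and the density of smooth functions in $B^s_{p,r}$, whereas only weak continuity can be extracted when $r=+\infty$, as is typical. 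I expect the main technical issue to be the control of the damping contribution to the Besov estimate of $u^{n+1}$: it requires combining Corollary \ref{c:tame} with Proposition \ref{p:comp} to treat the variable coefficient $1/\rho^{n+1}$ (relevant when $\gamma=0$), but this is handled by routine tame/composition arguments and does not modify the essential structure of the Danchin--Fanelli proof.
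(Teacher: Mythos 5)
Your proposal is correct and follows essentially the same route as the paper, which simply invokes the local well-posedness theory of \cite{D1, D:F} by treating the damping term $\mathfrak D^\gamma_\alpha(\rho,u)=\alpha\rho^\gamma u$ as a lower-order forcing term and running the standard iterative approximation scheme; your write-up merely spells out the details (uniform bounds on a small interval, convergence in lower-regularity norms, stability for uniqueness) that the paper leaves implicit.
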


The previous well-posedness statement is proved in \cite{D1, D:F} for $ \alpha = 0 $; for $ \alpha>0$ it easily follows from the same arguments of those papers,
for instance by looking at the damping term
$\mf D^\g_\alpha(\rho,u)\,=\,\alpha\,\rho^\g\,u$
as a forcing term in the equations, and implementing an iterative approximation procedure.

The goal of the present paper is to show that, in the case $\alpha>0$, the previous solutions are global, namely that $T=+\infty$ in Proposition \ref{p:local}. 
In this respect, we notice that a global $L^2$ bound for $u$ and global $L^\infty$ bounds for $\rho^{\pm1}$ are easily obtained \tsl{a priori} by simple energy and transport estimates for, respectively, the momentum equation and the mass conservation equation. Indeed, we have
\begin{equation} \label{est:rho-inf}
\forall\,t\in[0,T]\,,\qquad\qquad \rho_*\,\leq\,\rho(t)\,\leq\,\rho^*\,,
\end{equation}
together with the kinetic energy balance
\begin{equation}
\label{ene:est:glob:form}
\frac{1}{2}\,\frac{\dd}{\dt}\int_{\R^d}\rho\,|u|^2\,\dx\,+\,\int_{\R^d} \mathfrak{D}_{\alpha}^{\g}(\rho, u)\cdot u \,\dx\,=\,0\,.
\end{equation}
As $ \mathfrak{D}_{\alpha}^{\g}(\rho, u) \cdot u \geq 0$,
this latter relation in particular implies, together with \eqref{est:rho-inf}, that
\begin{equation} \label{est:u-L^2}
\forall\,t\in[0,T]\,,\qquad\qquad \left\|u(t)\right\|_{L^2}\,\lesssim\,\left\|u_0\right\|_{L^2}\,.
\end{equation}

It remains to show that also the high regularity norms remain globally bounded.
For this, the key point will be the following continuation criterion in terms of the velocity field \emph{only}. Such a criterion improves the ones
of \cite{D1, D:F} (stated for the case $\alpha=0$), which involved in addition a low regularity norm of the pressure gradient,
and extends the one of \cite{Korean} (again, stated for the case $\alpha=0$ only),
which considered only the case of high Sobolev regularity, to the class of Besov spaces with critical regularity. 

As already remarked at the beginning of this section, we point out that the next statement holds true for any $\alpha\in\R$, even though
we state it only for $\alpha\geq0$. In particular, it holds true also for the non-homogeneous incompressible Euler equations,
corresponding to the case $\alpha=0$.

\begin{lemma} \label{lem:cont:pri}
Let the assumptions of Proposition \ref{p:local} be in force.
In addition, assume that $p\geq 2$.
Let $\big(\rho,u,\nabla\Pi\big)$ be a local solution to system \eqref{eq:dd-E}-\eqref{eq:in-datum}-\eqref{eq:vacuum} related to the initial datum
$\big(\rho_0,u_0\big)$, defined on $[0,T^*[\,\times\R^d$ (for some time $T^*>0$) and such that the regularity properties (i)-(ii)-(iii)
stated in Proposition \ref{p:local} hold true for any $0<T<T^*$. In the case $p=+\infty$, assume moreover that
there exists a $p_0\in\,]d,+\infty[\,$ such that $\nabla u_0\in L^{p_0}$.

If $T^*<+\infty$ and
\begin{equation} \label{est:Du}
\int_0^{T^*} \left\|\nabla u(t) \right\|_{L^{\infty}}\,\dt\, <\, +\, \infty\,,
\end{equation} 
then $\big(\rho,u,\nabla\Pi\big)$ can be continued  beyond the time $T^*$ into a solution of \eqref{eq:dd-E}-\eqref{eq:in-datum}-\eqref{eq:vacuum}
with the same regularity.
\end{lemma}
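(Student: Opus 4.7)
The strategy is to show that, under assumption \eqref{est:Du}, the high-regularity norms $\|\nabla\rho\|_{B^{s-1}_{p,r}}$, $\|u\|_{B^s_{p,r}}$ and $\|\nabla\Pi\|_{B^s_{p,r}}$ remain bounded on $[0,T^*)$, so that invoking Proposition \ref{p:local} from some time $T<T^*$ sufficiently close to $T^*$ will extend the solution beyond $T^*$ with the same regularity. The low-regularity bounds $\rho_*\leq\rho\leq\rho^*$ and $\|u(t)\|_{L^2}\lesssim\|u_0\|_{L^2}$ are already furnished globally by \eqref{est:rho-inf} and \eqref{est:u-L^2}.

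Concretely, I would couple three \textsl{a priori} estimates. First, applying Theorem \ref{th:transport} together with the commutator bound of Lemma \ref{l:CommBCD} to the differentiated mass equation $\partial_t\nabla\rho+u\cdot\nabla(\nabla\rho)=-\nabla u\cdot\nabla\rho$ yields
\[
\|\nabla\rho(t)\|_{B^{s-1}_{p,r}}\;\lesssim\;\|\nabla\rho_0\|_{B^{s-1}_{p,r}}\,\exp\!\left(C\!\int_0^t\!\|\nabla u(\tau)\|_{L^\infty}\,d\tau\right).
\]
Second, dividing the momentum equation by $\rho>0$ (allowed by \eqref{est:rho-inf}) rewrites it as $\partial_t u+u\cdot\nabla u=-\rho^{-1}\nabla\Pi-\alpha\rho^{\g-1}u$, and applying Theorem \ref{th:transport} in $B^s_{p,r}$ in the self-transport regime $v=f=u$ (for which $V'=\|\nabla u\|_{L^\infty}$) gives an exponential-type bound for $\|u\|_{B^s_{p,r}}$ in terms of the source norm; the latter is controlled by the tame estimate (Corollary \ref{c:tame}) and the composition estimate (Proposition \ref{p:comp}) applied to $\rho\mapsto\rho^{\g-1}$, reducing matters to bounding $\|\nabla\Pi\|_{B^s_{p,r}}$. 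Third, taking the divergence of the same equation produces the elliptic problem \eqref{eq:elliptic} with source $F=-u\cdot\nabla u-\alpha\rho^{\g-1}u$, and Lemma \ref{lem:pres:est:Besov} bounds $\|\nabla\Pi\|_{B^s_{p,r}}$ by $\|F\|_{L^2}$, $\|\nabla\rho\|_{B^{s-1}_{p,r}}\|\nabla\Pi\|_{L^\infty}$ and $\|\rho\,\div F\|_{B^{s-1}_{p,r}}$. The key point here is that $\div(u\cdot\nabla u)$ is a quadratic expression in $\nabla u$ (since $\div u=0$), so that $\|\rho\,\div F\|_{B^{s-1}_{p,r}}$ produces only factors of the form $\|\nabla u\|_{L^\infty}\|u\|_{B^s_{p,r}}$, $\|\nabla\rho\|_{B^{s-1}_{p,r}}\|u\|_{L^\infty}$ and lower, rather than an $\|u\|_{B^{s+1}_{p,r}}$ contribution; the $L^\infty$ norm of $u$ is in turn handled \textsl{via} the Gagliardo--Nirenberg bound of Lemma \ref{l:GN-ineq}. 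Substituting these estimates into one another yields a closed Gronwall-type inequality for $\mc{X}(t):=\|u(t)\|_{B^s_{p,r}}+\|\nabla\rho(t)\|_{B^{s-1}_{p,r}}$ driven by $\int_0^t\Psi(\|\nabla u(\tau)\|_{L^\infty})\,d\tau$, which is finite on $[0,T^*)$ by \eqref{est:Du}.

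The main technical obstacle is the appearance of $\|\nabla\Pi\|_{L^\infty}$ on the right-hand side of Lemma \ref{lem:pres:est:Besov}, creating an apparent circular dependence with the $B^s_{p,r}$ bound I am trying to close. My plan is to break this loop by logarithmic interpolation $\|\nabla\Pi\|_{L^\infty}\lesssim\|\nabla\Pi\|_{L^2}^{1-\theta}\,\|\nabla\Pi\|_{B^s_{p,r}}^{\theta}$, with $\|\nabla\Pi\|_{L^2}$ controlled by $\|F\|_{L^2}$ \textsl{via} Lemma \ref{l:laxmilgram} (hence by low-order quantities only), and then absorbing $\|\nabla\Pi\|_{B^s_{p,r}}$ back into the left-hand side \textsl{via} Young's inequality, at the cost of raising the exponent of $\|\nabla\rho\|_{B^{s-1}_{p,r}}$ in the final Gronwall. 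A secondary subtlety, specific to $\g=0$, is the identity $\div(u/\rho)=-\rho^{-2}u\cdot\nabla\rho$ (which follows from $\div u=0$): it produces a damping contribution to $\rho\,\div F$ whose tame estimate must pair $\|u\|_{L^\infty}$ with $\|\nabla\rho\|_{B^{s-1}_{p,r}}$ so that no additional high-order derivative of $u$ is generated at top order.
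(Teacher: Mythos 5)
Your overall architecture (transport estimates for $\nabla\rho$ and $u$, elliptic estimate for $\nabla\Pi$, then Gr\"onwall) matches the paper's, and you correctly identify the crux: the term $\|\nabla\rho\|_{B^{s-1}_{p,r}}\|\nabla\Pi\|_{L^\infty}$ coming from Lemma \ref{lem:pres:est:Besov}. But your proposed way out of the circularity does not close. Interpolating $\|\nabla\Pi\|_{L^\infty}\lesssim\|\nabla\Pi\|_{L^2}^{1-\theta}\,\|\nabla\Pi\|_{B^s_{p,r}}^{\theta}$ and absorbing the $B^s_{p,r}$ factor by Young leaves a remainder of the form $\|\nabla\rho\|_{B^{s-1}_{p,r}}^{1/(1-\theta)}\,\|\nabla\Pi\|_{L^2}$, that is a \emph{superlinear} power of the unknown $\mc X(t)$ multiplied by an $L^1_t$ coefficient. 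The resulting differential inequality $\mc X'\lesssim g(t)\,\big(\mc X+\mc X^{q}\big)$ with $q=1/(1-\theta)>1$ and $g\in L^1$ keeps $\mc X$ finite only as long as $(q-1)\int_0^t g\,d\tau<\mc X(0)^{1-q}$; it can blow up before $T^*$ when $\int_0^{T^*}\|\nabla u\|_{L^\infty}\,dt$ is large, whereas a continuation criterion must hold with no smallness assumption on that integral. (A genuinely logarithmic interpolation would feed into Osgood's lemma, but the inequality you wrote is a power interpolation, and the $B^0_{\infty,\infty}$-type norm needed for the logarithmic version is not controlled by $\|\nabla\Pi\|_{L^2}$ alone.)

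The missing ingredient is an \emph{independent} low-order a priori bound on $\|\nabla\Pi\|_{L^\infty}$ (and on $\|u\|_{L^\infty}$). The paper gets it by interpolating $\|\nabla\Pi\|_{L^\infty}\lesssim\|\nabla\Pi\|_{L^2}^{\alpha_2}\,\|\Delta\Pi\|_{L^{p_0}}^{1-\alpha_2}$ with $p_0>d$, and controlling $\|\Delta\Pi\|_{L^{p_0}}$ through the identity $-\Delta\Pi=-\nabla\log\rho\cdot\nabla\Pi+\rho\,\nabla u:\nabla u-u\cdot\nabla\log\rho$, which reduces matters to bounding $\|\Omega\|_{L^{p_0}}$. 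That bound is obtained from a separate transport estimate on the vorticity equation, exploiting the favourable sign of the damping contribution $\frac{1}{\rho}\Omega$ and the bound \eqref{est:D-rho_q} on $\|\nabla\rho\|_{L^p\cap L^\infty}$, both consequences of \eqref{est:Du}; this yields $\sup_{[0,T^*[\,}\|\Omega\|_{L^{p_0}}<+\infty$ and then $\int_0^{T^*}\|\Delta\Pi\|_{L^{p_0}}\,dt<+\infty$. With these in hand, the coefficient multiplying $\mc X$ in the Gr\"onwall inequality stays in $L^1([0,T^*[)$ and the estimate is linear, hence closes. Your proposal contains no analogue of this vorticity step, so as written it has a genuine gap. (Two further points the paper must and does address, which you gloss over: avoiding an additive, non-integrable $\|\nabla u\|_{L^\infty}^2$ term when estimating $\|\rho\,\div F\|_{B^{s-1}_{p,r}}$, via the rewriting $\rho\,\nabla u:\nabla u=\nabla(\rho u):\nabla u-(u\cdot\nabla u)\cdot\nabla\rho$; and the endpoint $s=1$, $p=r=\infty$-free case $B^0_{\infty,1}$, which is not an algebra.)
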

 
\begin{proof}
Let us restrict our attention to the case $\alpha>0$ and $\g=0$, which is, to some extent,
the most difficult one, as it involves the presence of an additional term in the pressure equation. For obtaining the result in the case $\alpha=0$,
one can simply discard that additional term from the analysis. 
 
Without loss of generality, we can fix the value $\alpha=1$ in the argument below.

The method of the proof is classical. It consists in showing that, under the integral condition \eqref{est:Du}, one has
\begin{equation} \label{est:to-prove}
 \sup_{t\in[0,T^*[}\left(\left\|\nabla\rho(t)\right\|_{B^{s-1}_{p,r}}\,+\,\left\|u(t)\right\|_{B^s_{p,r}}\,+\,
\left\|\nabla\Pi(t)\right\|_{L^2\cap B^s_{p,r}}\right)\,<\,+\infty\,.
\end{equation}
We recall that  global in time bounds for $u$ in $L^2$ and for the $L^\infty$ norms of $\rho^{\pm1}$ are available in, respectively,
\eqref{est:u-L^2} and \eqref{est:rho-inf}.

First of all, we differentiate the mass equation with respect to $x^j$, for $j=1,2$, to get
\begin{equation} \label{eq:D-rho}
\d_t\d_j\rho\,+\,u\cdot\nabla \d_j\rho\,=\,-\,\d_ju\cdot\nabla\rho\,.
\end{equation}
A classical argument (see the details \tsl{e.g.} in Section 4 of \cite{D1}) allows one to obtain, for all $t\in[0,T^*[\,$, the inequality
\begin{equation}
	\label{lin:est:nab:a}
\left\|\nabla \rho\right\|_{B^{s-1}_{p,r}}\,\lesssim\,\left\|\nabla\rho_0\right\|_{B^{s-1}_{p,r}}\,+\,
\int^t_0\left(\|\nabla u \|_{L^{\infty}}\,\|\nabla \rho\|_{B^{s-1}_{p,r}}\, +\,\|\nabla \rho \|_{L^{\infty}}\, \|\nabla u\|_{B^{s-1}_{p,r}}\right)\,\dt\,,
\end{equation}
at least in the case $p<+\infty$, or $p=+\infty$ and $s>1$.
In the endpoint case $p=+\infty$ and $s=1$ (and then $r=1$ as well), we rather work with $\rho$ (see Section 3 of \cite{D:F}, this time) to get
\begin{equation}
	\label{lin:est:rho_inf}
\left\|\rho\right\|_{B^{1}_{\infty,1}}\,\lesssim\,\left\|\rho_0\right\|_{B^{1}_{\infty,1}}\,+\,
\int^t_0\left(\|\nabla u \|_{L^{\infty}}\,\|\rho\|_{B^{1}_{\infty,1}}\, +\,\|\nabla\rho \|_{L^{\infty}}\, \|u\|_{B^{1}_{\infty,1}}\right)\,\dt\,.
\end{equation}

Next, we consider the velocity field. Owing to the absence of vacuum, see \eqref{est:rho-inf}, we can divide the momentum equation by $\rho$ and recast it in the following
form (recall that we have fixed $\alpha=1$ and $\g=0$ at the beginning of the proof):
\begin{equation} \label{eq:mom_2}
\d_tu\,+\,u\cdot\nabla u\,+\,\frac{1}{\rho}\,\nabla\Pi\,+\,\frac{1}{\rho}\,u\,=\,0\,.
\end{equation}
By transport estimates, together with commutator and tame estimates, we can bound, for any $t\in[0,T^*[\,$, the norm of the velocity field as follows:
\begin{align}
	\label{lin:est:u}
\|u(t)\|_{B^{s}_{p,r}}\,&\lesssim\,\|u_0\|_{B^{s}_{p,r}}\,+\,\int^t_0\Bigg(\|\nabla u \|_{L^{\infty}}\,\|u\|_{B^{s}_{p,r}}\,+\,
\frac{1}{\rho_*}\,\|u\|_{B^s_{p,r}}\,+\,\frac{1}{\rho_*}\,\left\|\nabla\Pi\right\|_{B^s_{p,r}} \\
\nonumber
&\qquad\qquad\qquad\qquad\qquad\quad
+\,\|u\|_{L^\infty}\,\left\|\nabla\rho\right\|_{B^{s-1}_{p,r}}\,+\,\|\nabla\Pi\|_{L^\infty}\,\left\|\nabla\rho\right\|_{B^{s-1}_{p,r}}\Bigg)\,\dt\,.
\end{align}
For notational convenience, let us set
\[
 \mc N(t)\,:=\,\left\|\nabla\rho(t)\right\|_{B^{s-1}_{p,r}}\,+\,\left\|u(t)\right\|_{B^s_{p,r}}\,. 
\]
In addition, we notice that 
the Gagliardo-Nirenberg type inequalities of Lemma \ref{l:GN-ineq} imply
\begin{equation} \label{est:interpol_u-Pi}
\|u\|_{L^\infty}\,\lesssim\,\|u\|_{L^2}^{\alpha_1}\,\|\nabla u\|^{1-\alpha_1}_{L^\infty} \qquad \mbox{ and }\qquad
\|\nabla\Pi\|_{L^\infty}\,\lesssim\,\|\nabla\Pi\|_{L^2}^{\alpha_2}\,\|\Delta\Pi\|^{1-\alpha_2}_{L^{p_0}}\,,
\end{equation}
which hold true for two suitable exponents $\alpha_{1,2}\in\,]0,1[\,$, as soon as $p_0>d$. If the index $p$ from the statement verifies
$p>d$, we simply take $p_0=p$, if not we choose a different
$p_0\in\,]d,+\infty[\,$ (in the case $p=+\infty$, simply take the $p_0$ from the statement).
After an application of the Young inequality and summing up
the resulting expression with \eqref{lin:est:nab:a} (or with \eqref{lin:est:rho_inf} in case $p=+\infty$), we find, for any $t\in[0,T^*[\,$, the estimate
\[
 \mc N(t)\,\lesssim\,\mc N(0)+\int^t_0\Big(\big(1+\left\|\nabla\rho\right\|_{L^\infty}+\left\|\nabla u\right\|_{L^\infty}+
\left\|\nabla\Pi\right\|_{L^2}+\left\|\Delta\Pi\right\|_{L^{p_0}}\big)\mc N(\t)+\left\|\nabla\Pi\right\|_{B^s_{p,r}}\Big)\dd\t\,, 
\]
for an implicit multiplicative constant which depends also on $\rho_*$, $\rho^*$ and $\left\|u_0\right\|_{L^2}$.
Observe that, under our assumptions, one always has $B^{s-1}_{p,r}\,\hookrightarrow\,L^p\cap L^\infty$. Then, from equation \eqref{eq:D-rho}
and classical transport estimates, one deduces that, for all $q\in[p,+\infty]$ and all $t\in[0,T^*[\,$, one has
\begin{equation}
\label{est:D-rho_L^q_I}
\left\|\nabla\rho(t)\right\|_{L^q}\,\lesssim\,\left\|\nabla\rho_0\right\|_{L^q}\,+\,\int^t_0\left\|\nabla u(\t)\right\|_{L^\infty}\,
\left\|\nabla\rho(t)\right\|_{L^q}\dd\t\,.
\end{equation}
An application of the Gr\"onwall lemma thus yields, under condition \eqref{est:Du}, the bound
\begin{equation} \label{est:D-rho_q}
\sup_{t\in[0,T^*[}\left\|\nabla \rho(t)\right\|_{L^p\cap L^\infty}\,\leq\,C\,.
\end{equation}
Of course, this inequality only gives a bound on $\nabla\rho$ in $L^\infty$ in the endpoint case $p=+\infty$.

In addition, by computing the divergence of the momentum equation \eqref{eq:mom_2},
we discover that the pressure term $\Pi$ satisfies the elliptic equation
\begin{equation} \label{eq:D-Pi}
 -\,\div\left(\frac{1}{\rho}\,\nabla\Pi\right)\,=\,
\div(F)\,,\qquad\qquad \mbox{ with }\qquad F\,:=\,u\cdot\nabla u\,+\,\frac{1}{\rho}\,u\,.
\end{equation}
An application of Lemma \ref{l:laxmilgram} thus yields
\begin{equation} \label{est:Pi_L^2}
\frac{1}{\rho^*}\,\left\|\nabla\Pi\right\|_{L^2}\,\leq\,\left\|u\cdot\nabla u\,+\,\frac{1}{\rho}\,u\right\|_{L^2}\,\lesssim\,
\left\|u_0\right\|_{L^2}\,\left\|\nabla u\right\|_{L^\infty}\,+\,\frac{1}{\rho_*}\,\left\|u_0\right\|_{L^2}\,, 
\end{equation}
where we have also used the inequalities in \eqref{est:rho-inf} and \eqref{est:u-L^2}.

Inserting \eqref{est:D-rho_q} and \eqref{est:Pi_L^2} into the previous estimate for $\mc N(t)$, we infer that, if condition \eqref{est:Du} holds,
for any $t\in[0,T^*[\,$ one has
\begin{align} \nonumber
 \label{est:N_partial}
\mc N(t) \,&\lesssim\, \mc N(0)\,+\,\left(1+\| \nabla \rho_0\|_{L^{\infty}} + \left(1+\frac{1}{\rho_*}\right)\|u_0\|_{L^2}\right) \\
& \qquad \qquad  \qquad\qquad \times  \int^t_0\Big(\big(1+\left\|\nabla u\right\|_{L^\infty}+\left\|\Delta\Pi\right\|_{L^{p_0}}\big)\,\mc N(\t)\,+\,
\left\|\nabla\Pi\right\|_{B^s_{p,r}}\Big)\,\dd\t \\
\nonumber
&\lesssim\, \mc N(0)\,+\,\int^t_0\Big(\big(1+\left\|\nabla u\right\|_{L^\infty}+\left\|\Delta\Pi\right\|_{L^{p_0}}\big)\,\mc N(\t)\,+\,
\left\|\nabla\Pi\right\|_{B^s_{p,r}}\Big)\,\dd\t\,,
\end{align}
for a new multiplicative constant, also depending on $\|\nabla\rho_0\|_{L^\infty}$.

Next, let us bound the $L^{p_0}$ norm of $\Delta\Pi$. For this, we develop the derivatives in \eqref{eq:D-Pi} to find an equation for
$\Delta\Pi$: we get
\[ 
-\,\Delta\Pi\,=\,-\,\nabla\log\rho\cdot\nabla\Pi\,+\,\rho\,\nabla u:\nabla u \,-\,u\cdot\nabla\log\rho\,.
\] 
Thus, we deduce
\begin{align*}
 \left\|\Delta\Pi\right\|_{L^{p_0}}\,\leq\,\frac{1}{\rho_*}\,\left\|\nabla\rho\right\|_{L^{\infty}}\,\left\|\nabla\Pi\right\|_{L^{p_0}}\,+\,
 \rho^*\,\left\|\nabla u\right\|_{L^{p_0}}\,
\left\|\nabla u\right\|_{L^\infty}\,+\frac{1}{\rho_*}\,\|u\|_{L^{p_0}}\,\left\|\nabla\rho\right\|_{L^\infty}\,.
\end{align*}
First, we can use \eqref{est:D-rho_q} to see that, under assumption \eqref{est:Du}, the $L^\infty$ norm of $\nabla\rho$ is finite over $[0,T^*[\,$.
Next, recall that, with our choice of $p_0$, we always have $p\leq p_0<+\infty$ if $p$ is finite, $d<p_0<+\infty$ if $p=+\infty$ (which in particular implies that
$2<p_0<+\infty$).
Thus,  we can use interpolation between Lebesgue spaces and then the interpolation inequalities \eqref{est:interpol_u-Pi} to get, for suitable exponents
$\theta$ and $\theta_1$ belonging to $\,]0,1[\,$, the bound
\begin{align*}
 \left\|\Delta\Pi\right\|_{L^{p_0}}\,\leq\,\left\|\nabla\Pi\right\|_{L^2}^\theta\,
\left\|\Delta\Pi\right\|_{L^{p_0}}^{1-\theta}\,+\,
 \rho^*\,\left\|\nabla u\right\|_{L^{p_0}}\,
\left\|\nabla u\right\|_{L^\infty}\,+\frac{1}{\rho_*}\,\|u\|_{L^2}^{\theta_1}\,\|\nabla u\|^{1-\theta_1}_{L^\infty}\,.
\end{align*}
Thanks to an application of the Young inequality, we infer, pointwise in the interval $[0,T^*[\,$, the estimate
\[
 \left\|\Delta\Pi\right\|_{L^{p_0}}\,\lesssim\,1\,+\,\left\|\nabla u\right\|_{L^\infty}\,+\,\,\left\|\nabla u\right\|_{L^{p_0}}\,\left\|\nabla u\right\|_{L^\infty}\,.
\]
Since (as already remarked above) we always have $d<p_0<+\infty$ by our choice of $p_0$, by Calder\'on-Zygmund theory we can write the estimate
\[
\left\|\nabla u\right\|_{L^{p_0}}\,\lesssim\,\left\|\Omega\right\|_{L^{p_0}}\,.
\]
The previous bound implies that
\begin{equation} \label{est:Delta-Pi}
 \left\|\Delta\Pi\right\|_{L^{p_0}}\,\lesssim\,1\,+\,\left\|\nabla u\right\|_{L^\infty}\,+\,\left\|\nabla u\right\|_{L^\infty}\,\left\|\Omega\right\|_{L^{p_0}}\,.
\end{equation}

In turn, we have moved the problem of bounding $\left\|\Delta\Pi\right\|_{L^{p_0}}$ to the one of bounding $\Omega$ in the same norm. The advantage is that,
from \eqref{eq:mom_2}, we can compute an equation for the vorticity $\Omega$, which reads
\begin{equation*}
\d_t\Omega\,+\,u\cdot\nabla\Omega\,+\,\Omega\cdot D u\,+\,\nabla u\cdot\Omega\,+\,\nabla\left(\frac{1}{\rho}\right)\wedge\nabla\Pi\,+\,
\frac{1}{\rho}\,\Omega\,+\,\nabla\left(\frac{1}{\rho}\right)\wedge u\,=\,0\,,
\end{equation*}
where, given two vectors $v,w\in \R^d$, we have defined $v\wedge w$ to be the skew-symmetric matrix with components
\[
\big(v\wedge w\big)_{jk}\,=\,v^k\,w^j\,-\,v^j\,w^k\,.
\]
Notice that, in the specific cases $d=2$ and $d=3$, the equation for $\Omega$ actually simplifies; however, for the sake of generality, let us work
in any space dimension $d\geq2$.
Applying transport estimates in $L^{p_0}$ to the previous equation, we find, for any time $t\in[0,T^*[\,$, the inequality
\begin{align*}
\left\|\Omega(t)\right\|_{L^{p_0}}\,&\lesssim\,\left\|\Omega_0\right\|_{L^{p_0}}\,+\,\int^t_0\Big(\left\|\nabla u\right\|_{L^\infty}\,\left\|\Omega\right\|_{L^{p_0}}\,+\,
\left\|\nabla\rho\right\|_{L^\infty}\,\left\|\nabla\Pi\right\|_{L^{p_0}}\,+\,\left\|\nabla\rho\right\|_{L^\infty}\,\left\|u\right\|_{L^{p_0}}\Big)\,\dd\t \\
&\lesssim\,\left\|\Omega_0\right\|_{L^{p_0}}\,+\,\int^t_0\Big(\big(1+\left\|\nabla\rho\right\|_{L^\infty}\big)\,\left\|\nabla u\right\|_{L^\infty}\,
\left\|\Omega\right\|_{L^{p_0}}\,+\,
\left\|\nabla\rho\right\|_{L^\infty}\,\big(1+\left\|\nabla u\right\|_{L^\infty}\big)\Big)\,\dd\t\,,
\end{align*}
where, for passing from the first inequality to the second one, we have used interpolation inequalities between Lebesgue spaces and the ones from \eqref{est:interpol_u-Pi},
as previously done, combined with the Young inequality,
and estimates \eqref{est:Pi_L^2} and \eqref{est:Delta-Pi} for the pressure. The implicit multiplicative constant depends on $\rho_*$, $\rho^*$ and $\|u_0\|_{L^2}$,
of course.
Remark again that, owing to our assumptions, one has $B^{s-1}_{p,r}\,\hookrightarrow\,L^p\cap L^\infty$ and $p\leq p_0<+\infty$ when $p$ finite,
so the $L^{p_0}$ norm of the initial datum is finite; in the case $p=+\infty$, this simply comes from the additional assumption
$\nabla u_0\in L^{p_0}$ and Calder\'on-Zygmund theory. We also notice that, in the previous estimate,
we could neglect the contribution coming from the term $\frac{1}{\rho}\Omega$, because it has the right sign.

At this point, we remark that, under condition \eqref{est:Du}, estimate \eqref{est:D-rho_q} holds true, hence the $L^\infty$ norm of $\nabla\rho$ remains bounded
on $[0,T^*[\,$. Thus, an application of the Gr\"onwall inequality yields
\[
\forall\,t\in[0,T^*[\;,\qquad
\left\|\Omega(t)\right\|_{L^{p_0}}\,\lesssim\,\left(\left\|\Omega_0\right\|_{L^{p_0}}\,+\,\int^t_0\big(1+\left\|\nabla u\right\|_{L^\infty}\big)\,\dd\t\right)\,\exp\left(C\int^t_0\left\|\nabla u\right\|_{L^\infty}\,\dd\t\right)\,,
\]
for a suitable positive constant $C>0$.
Therefore, under condition \eqref{est:Du}, we find
\begin{equation} \label{est:vort-p_0}
 \sup_{t\in[0,T^*[}\left\|\Omega(t)\right\|_{L^{p_0}}\,\leq\,C\,.
\end{equation}
Now, thanks to \eqref{est:Delta-Pi}, from \eqref{est:Du} and \eqref{est:vort-p_0}, we deduce that
\begin{equation} \label{est:Pi-integral}
 \int^{T^*}_0\left\|\Delta\Pi(t)\right\|_{L^{p_0}}\,\dt\,<\,+\infty\,.
\end{equation}

As a last step of the argument, we need to bound the pressure term $\nabla\Pi$ in the Besov norm $B^s_{p,r}$, which correspond to the last term
appearing in \eqref{est:N_partial}. 
The starting point is the estimate of Lemma \ref{lem:pres:est:Besov}, which reads
\[
 \|\nabla \Pi \|_{B^{s}_{p,r}}\, \lesssim\,\left( 1 +  \| \nabla \rho  \|_{L^\infty}^{\eta} \right)\,\|F\|_{L^2}\,+\,
 \| \nabla \rho  \|_{B^{s-1}_{p,r}}\,\left\|\nabla\Pi\right\|_{L^\infty}\,+\, \left\| \rho\,  \div(F)  \right\|_{B^{s-1}_{p,r}}\,,
\]
where $F$ is defined in \eqref{eq:D-Pi}. By making use of \eqref{est:D-rho_q}, \eqref{est:Pi_L^2} and the interpolation inequality \eqref{est:interpol_u-Pi},
we obtain the bound
\begin{equation} \label{est:Pi-B_provvis}
 \|\nabla \Pi \|_{B^{s}_{p,r}}\, \lesssim\,1\,+\,\left\|\nabla u\right\|_{L^\infty}\,+\,
 \left(1+\left\|\nabla u\right\|_{L^\infty}+\left\|\Delta\Pi\right\|_{L^{p_0}}\right)\,\mc N\,+\, \left\| \rho\,  \div(F)  \right\|_{B^{s-1}_{p,r}}\,.
\end{equation}
We have now to find a control on the last term on the right-hand side of the previous inequality.

In order to avoid the appearing of a dangerous $\left\|\nabla u\right\|_{L^\infty}^2$ term, let us write
\begin{align*}
\rho\,  \div(F)\,&=\,\rho\,\nabla u:\nabla u\,+\,\rho\,\nabla\left(\frac{1}{\rho}\right)\cdot u\,=\,
\nabla\big(\rho\,u\big):\nabla u\,-\,\big(u\cdot\nabla u\big)\cdot\nabla\rho \,-\,u\cdot\nabla\log\rho\,.
\end{align*}
Hence, by a repeated use of the tame estimates, when $s>1$ we gather
\begin{align*}
\left\| \rho\,  \div(F)  \right\|_{B^{s-1}_{p,r}}\,&\lesssim\,\left\|\nabla u\right\|_{L^\infty}\,\left\|\nabla\big(\rho u\big)\right\|_{B^{s-1}_{p,r}}
\,+\,\left\|\nabla u\right\|_{B^{s-1}_{p,r}}\,\left\|\nabla\big(\rho u\big)\right\|_{L^\infty} \\
&\qquad\qquad \,+\,
\left\|u\cdot\nabla u\right\|_{B^{s-1}_{p,r}}\,\left\|\nabla\rho\right\|_{L^\infty}\,+\,
\left\|u\cdot\nabla u\right\|_{L^\infty}\,\left\|\nabla\rho\right\|_{B^{s-1}_{p,r}} \\
&\qquad\qquad\qquad\qquad \,+\,\|u\|_{L^\infty}\,\left\|\nabla\log\rho\right\|_{B^{s-1}_{p,r}}\,+\,
\|u\|_{B^{s-1}_{p,r}}\,\left\|\nabla\log\rho\right\|_{L^\infty}\,.
\end{align*}
Observe that, by using Leibniz rule and Bony's paraproduct decomposition, we can bound
\begin{align*}
\left\|\nabla\big(\rho u\big)\right\|_{B^{s-1}_{p,r}}\,&\leq\,\left\|u\,\nabla\rho\right\|_{B^{s-1}_{p,r}}\,+\,\left\|\rho\,\nabla u\right\|_{B^{s-1}_{p,r}} \\
&\lesssim\,\left\|\nabla\rho\right\|_{B^{s-1}_{p,r}}\,\left\|u\right\|_{L^\infty}\,+\,\left\|u\right\|_{B^{s-1}_{p,r}}\,\left\|\nabla\rho\right\|_{L^\infty}\,+\,
\left\|\nabla u\right\|_{B^{s-1}_{p,r}}\,+\,\left\|\mc T_{\nabla u}(\rho-1)\right\|_{B^{s-1}_{p,r}}\,,
\end{align*}
where, owing to Proposition \ref{p:op}, we can bound
\[
\left\|\mc T_{\nabla u}(\rho-1)\right\|_{B^{s-1}_{p,r}}\,\lesssim\,\left\|\nabla u\right\|_{B^{-1}_{\infty,\infty}}\,\left\|\nabla \rho\right\|_{B^{s-2}_{p,r}} \,\lesssim\,\left\|u\right\|_{L^\infty}\,\left\|\nabla \rho\right\|_{B^{s-1}_{p,r}}\,.
\]
In particular, this argument shows that we have indeed avoided the appearing of the term  $\left\|\nabla u\right\|_{L^\infty}^2$
in the continuation criterion.

If $s=1$ (and then $r=1$ as well), instead, the argument for bounding $\rho\,\div(F)$ is slightly more involved, as $B^0_{\infty,1}$ is \emph{not} an algebra.
However, we can use the fact that $\rho$ and $u$ actually belong to $B^{1}_{\infty,1}$ and that $B^{1/2}_{\infty,1}$ is still an interpolation space between
$B^{1}_{\infty,1}$ and $L^2$. Thus, arguing as done for arriving to \eqref{algebra:B^0} and \eqref{algebra:B^0_2}, we can bound
\begin{align*}
\left\|\nabla\big(\rho\,u\big):\nabla u\right\|_{B^0_{\infty,1}}\,&\lesssim\,\left\|\rho\,u\right\|_{B^1_{\infty,1}}\,\left\|\nabla u\right\|_{L^\infty}\,+\,
\left\|\nabla\big(\rho\,u\big)\right\|_{L^\infty}\,\left\|\nabla u\right\|_{B^0_{\infty,1}} \\
\left\|u\cdot\nabla\log\rho\right\|_{B^0_{\infty,1}}\,&\lesssim\,\left\|u\right\|_{B^{1/2}_{\infty,1}}\,\left\|\nabla\log\rho\right\|_{L^\infty}\,+\,
\|u\|_{L^\infty}\,\left\|\nabla\log\rho\right\|_{B^0_{\infty,1}}\,,
\end{align*}
where we have exploited the fact that $\nabla\big(\rho u\big):\nabla u\,=\,\div\big(\rho\,u\cdot\nabla u\big)$
and that $\nabla\log\rho\cdot u\,=\,\div\left(\log\rho\,u\right)$.
As for the last term, we have 
\[
\big(u\cdot\nabla u\big)\cdot\nabla\rho\,=\,\sum_{j,k}u^j\,\d_ju^k\,\d_k\rho\,,
\]
so we can argue as above to get
\begin{align*}
\left\|\big(u\cdot\nabla u\big)\cdot\nabla\rho\right\|_{B^0_{\infty,1}}\,&\lesssim\,
\left\|u\right\|_{B^{0}_{\infty,1}}\,\left\|\nabla u\cdot \nabla\rho\right\|_{L^\infty}\,+\,
\left\|u\right\|_{L^\infty}\,\left\|\nabla u\cdot \nabla\rho\right\|_{B^{0}_{\infty,1}}\,,
\end{align*}
where, owing to the fact that, for any $j$ fixed, $\sum_k\d_ju^k\,\d_k\rho\,=\,\div\big(\rho\,\d_j u\big)$, one has
\[
\left\|\nabla u\cdot \nabla\rho\right\|_{B^{0}_{\infty,1}}\,\lesssim\,\left\|\nabla u\right\|_{B^{0}_{\infty,1}}\,\left\|\nabla\rho\right\|_{L^\infty}\,+\,
\left\|\nabla u\right\|_{L^\infty}\,\left\|\rho\right\|_{B^1_{\infty,1}}\,.
\]

In any case, one arrive at the following bound for the Besov norm of $\rho\,\div(F)$:
\begin{align*}
\left\| \rho\,  \div(F)  \right\|_{B^{s-1}_{p,r}}\,&\lesssim\,
\Big(1+\left\|u\right\|_{L^\infty}+\left\|\nabla\rho\right\|_{L^\infty}\Big)\,\left\|\nabla u\right\|_{L^\infty}\,\mc N
 \\
&\qquad \,+\,\left\|u\right\|_{L^\infty}\,\left\|\nabla\rho\right\|_{L^\infty}\,\mc N\,+\,
\Big(\left\|u\right\|_{L^\infty}+\left\|\nabla\rho\right\|_{L^\infty}\Big)\,\mc N\,,
\end{align*}
where we have also used the paralinearisation result of Proposition \ref{p:comp}.
In order to control the $\|u\|_{L^\infty}$, we can apply the second interpolation inequality of Lemma \ref{l:GN-ineq}
to deduce the existence of a $\alpha_3\in\,]0,1[\,$
such that
\[
\|u\|_{L^\infty}\,\lesssim\,\left\|u\right\|_{L^2}^{\alpha_3}\,\left\|\Omega\right\|_{L^{p_0}}^{1-\alpha_3}\,,
\]
where $p_0$ is the same index appearing above (in particular, $p_0=p$ if $p>d$, while $p<p_0<+\infty$ otherwise).
In particular, by estimates \eqref{est:u-L^2} and \eqref{est:vort-p_0}, we deduce that, under condition \eqref{est:Du},
we have $\sup_{[0,T^*[\,}\left\|u\right\|_{L^\infty}\,<\,+\infty$.
Combining this property with \eqref{est:D-rho_q}, from \eqref{est:Pi-B_provvis} we infer the inequality
\begin{equation} \label{est:D-Pi_Besov}
\|\nabla \Pi \|_{B^{s}_{p,r}}\, \lesssim\,1\,+\,\left\|\nabla u\right\|_{L^\infty}\,+\,
 \left(1+\left\|\nabla u\right\|_{L^\infty}+\left\|\Delta\Pi\right\|_{L^{p_0}}\right)\,\mc N\,.
\end{equation}

At this point, we can plug the previous bound \eqref{est:D-Pi_Besov} into \eqref{est:N_partial} and get, under condition \eqref{est:Du}, the following inequality:
\[
\forall\,t\in[0,T^*[\;,\qquad\qquad \mc N(t)\,\lesssim\,\mc N(0)\,+\,\int^t_0\Big(1+\left\|\nabla u\right\|_{L^\infty}+\left\|\Delta\Pi\right\|_{L^{p_0}}\Big)\,
\mc N(\t)\,\dd\t\,.
\]
An application of Gr\"onwall inequality and the use of \eqref{est:Du} and \eqref{est:Pi-integral} then imply that
\[
\sup_{t\in[0,T^*[\,}\mc N(t)\,<\,+\infty\,.
\]
Thanks to the embedding $B^s_{p,r}\,\hookrightarrow\, W^{1,\infty}$, in turn we deduce that also $\left\|\nabla u\right\|_{L^\infty}$ is uniformly bounded in
$[0,T^*[\,$, thus also $\left\|\nabla\Pi\right\|_{L^2}$, owing to \eqref{est:Pi_L^2}, and $\left\|\Delta\Pi\right\|_{L^{p_0}}$, thanks to \eqref{est:Delta-Pi}.
Coming back to \eqref{est:D-Pi_Besov}, we finally gather that
$\sup_{[0,T^*[\,}\left\|\nabla\Pi\right\|_{B^s_{p,r}}\,<\,+\infty$. Hence, we infer that \eqref{est:to-prove} holds true, as sought.

From \eqref{est:to-prove}, one can use a standard argument (which we omit here) to complete the proof of the continuation criterion.
\end{proof}

We conclude this section with a remark, relating our continuation criterion of Lemma \ref{lem:cont:pri} to the celebrated Beale-Kato-Majda criterion.

\begin{rmk} \label{r:BKM}
It would be tempting to replace, in the subcritical case $s>0$, condition \eqref{est:Du} of the continuation criterion with the condition
\begin{equation} \label{est:BKM}
\int^{T^*}_0\left\|\Omega(t)\right\|_{L^\infty}\,\dt\,<\,+\,\infty\,,
\end{equation}
where $\Omega$ is the vorticity of the fluid. This would correspond to the equivalent of the celebrated Beale-Kato-Majda continuation criterion in the context
of the non-homogeneous incompressible Euler equations.

Unfortunately, we do not know, at present, whether our continuation criterion still holds true under the weaker assumption \eqref{est:BKM}. The problem
comes from the presence, in bound \eqref{lin:est:nab:a} for the density, of the factor $\|\nabla\rho\|_{L^\infty}$ multiplying
the higher order norm $\|\nabla u\|_{B^{s-1}_{p,r}}$. Indeed, owing to estimate \eqref{est:D-rho_L^q_I} and the well-known interpolation
inequality
\[
\left\|\nabla u\right\|_{L^\infty}\,\lesssim\,\left\|u\right\|_{L^2}\,+\,\left\|\Omega\right\|_{L^\infty}\,
\log\left(e\,+\,\frac{\left\|\Omega\right\|_{B^{s-1}_{p,r}}}{\left\|\Omega\right\|_{L^\infty}}\right)
\]
(see \tsl{e.g.} Proposition 7.7 of \cite{BCD}),
the term $\|\nabla\rho\|_{L^\infty}\,\|\nabla u\|_{B^{s-1}_{p,r}}$ can be controlled only by $\mc N^2$. This growth is, of course, not suitable
for obtaining the sought continuation criterion.
\end{rmk}

\section{Global well-posedness in the case $\g=1$} \label{s:g=1}

In this section, we carry out the proof of Theorems \ref{th:g=1_d} and \ref{th:g=1_2} in the case $\g=1$. 
This case is somehow simpler, inasmuch as, owing to assumption \eqref{eq:vacuum} of absence of vacuum, we can divide the momentum equation by $\rho$ and write it
in the following form:
\begin{equation*}
\d_tu\,+\,u\cdot\nabla u\,+\,\frac{1}{\rho}\,\nabla\Pi\,+\,\alpha\,u\,=\,0\,.
\end{equation*}
Thus, if we define the new velocity field $\wu$ and the new pressure function $\wtilde\Pi$ as
\begin{equation} \label{eq:change}
 \wu\,:=\,e^{\alpha\,t}\,u\qquad\qquad \mbox{ and }\qquad\qquad \wtilde\Pi\,:=\,e^{\alpha\,t}\,\Pi \,,
\end{equation}
then system \eqref{eq:dd-E} can be recasted in the following form:
\begin{equation} \label{eq:dd-E-wu}
\left\{\begin{array}{l}
\d_t\rho\,+\,e^{-\alpha t}\,\wu\cdot\nabla\rho\,=\,0 \\[1ex]
\rho\,\d_t\wu\,+\,\rho\,e^{-\alpha t}\,\wu\cdot\nabla\wu\,+\,\nabla\wtilde\Pi\,=\,0 \\[1ex]
\div\wu\,=\,0\,.
       \end{array}
\right.
\end{equation}
Of course, systems \eqref{eq:dd-E} and \eqref{eq:dd-E-wu} are equivalent in our functional framework.

Throughout this section, we consider a solution $\big(\rho,u,\nabla\Pi\big)$ to system \eqref{eq:dd-E}, related to an initial datum $\big(\rho_0,u_0\big)$ satisfying the
assumptions fixed in Theorems \ref{th:g=1_d} and \ref{th:g=1_2} above.
Let us call $T=T(\rho_0,u_0)>0$ the lifespan of that solution, so that $\big(\rho,u,\nabla\Pi\big)$ is defined on $[0,T[\,\times\R^d$.
The transformation \eqref{eq:change} produces a solution $\big(\rho,\wu,\nabla\wtilde\Pi\big)$ to system \eqref{eq:dd-E-wu}, related to the same initial datum,
defined on the same time interval.

Our goal is to prove that $T=+\infty$. Arguing by contradiction, in what follows we are going to assume that $T<+\infty$ instead.

\medbreak
Recall that, by the mass equation, we immediately get the bound  \eqref{est:rho-inf}, namely
\begin{equation} \label{est:rho-inf_RIP}
\forall\,t\in[0,T[\;,\qquad\qquad \rho_*\,\leq\,\rho(t)\,\leq\,\rho^*\,.
\end{equation}
Moreover, an energy estimate performed on the ``momentum equation'' of system \eqref{eq:dd-E-wu} implies that
\[
\frac{1}{2}\,\frac{\dd}{\dt}\big\|\sqrt{\rho}\;\wu\big\|_{L^2}^2\,=\,0\,,
\]
which implies, together with \eqref{eq:vacuum} and \eqref{est:rho-inf_RIP}, that
\begin{equation} \label{est:wu-en}
\forall\,t\in[0,T[\;,\qquad\qquad \left\|\wu(t)\right\|_{L^2}\,\lesssim\,\left\|u_0\right\|_{L^2}\qquad
\mbox{ (\tsl{i.e.} \ $\left\|u(t)\right\|_{L^2}\,\lesssim\,e^{-\alpha t}\,\left\|u_0\right\|_{L^2}$) }\,.
\end{equation}

Thus, we need to prove a global uniform (in time) bound also for the higher order regularity norms of the solution $\big(\rho,\wu,\nabla\wtilde\Pi\big)$. For this, we will
exploit the continuation criterion of Lemma \ref{lem:cont:pri} in a fundamental way:
we know that the Sobolev norms of $\big(\rho,\wu,\nabla\wtilde\Pi\big)$ remain bounded on $[0,T[\,$ if and only if the integral condition \eqref{est:Du} is verified.

Now, cutting into low and high frequencies, it is standard to bound
\[ 
\left\|\nabla u\right\|_{L^\infty}\,\lesssim\,\left\|u\right\|_{L^2}\,+\,\left\|\Omega\right\|_{B^0_{\infty,1}}\,,
\] 
where we recall that $\Omega\,:=\,Du\,-\,\nabla u$ denotes the vorticity of the fluid. 
Recall that we have assumed, by contradiction, that the lifespan $T$ verifies $T<+\infty$. Hence, owing to \eqref{est:wu-en}, the failure of
condition \eqref{est:Du} implies that
\begin{equation} \label{est:Omega_to-prove}
 \int^T_0\left\|\Omega(t)\right\|_{B^0_{\infty,1}}\,\dt\,=\,\int^T_0e^{-\alpha\,t}\;\left\|\wtilde\Omega(t)\right\|_{B^0_{\infty,1}}\,\dt\,=\,+\,\infty\,,
\end{equation}
where we have denoted by $\wtilde\Omega\,:=\,D\wu\,-\,\nabla\wu$ the vorticity of the divergence-free velocity field $\wu$.

From now on, for the sake of clarity, we split our argument into two parts:
in Subsection \ref{ss:g=1_d=2} we focus on the $2$-D situation and prove global existence under a smallness assumption on the non-homogeneity only
(namely, we prove Theorem \ref{th:g=1_2}), while in Subsection \ref{ss:g=1_d-geq3} we treat the general
case of space dimension $d\geq2$ and we prove Theorem \ref{th:g=1_d}.

\subsection{The case of planar flows: proof of Theorem \ref{th:g=1_2}} \label{ss:g=1_d=2}

To begin with, we consider the case of two-dimensional flows $d=2$. We aim at proving Theorem \ref{th:g=1_2}, namely global well-posedness
under a smallness condition relying only on the size of the initial non-homogeneity $\rho_0-1$.

Recall that, in the case $d=2$, the vorticities $\Omega$ and $\wOm$, related respectively to the divergence-free velocity fields $u$ and $\wu$,
can be identified with the scalar functions
\[
\o\,:=\,\curl(u)\,=\,\d_1u^2\,-\,\d_2u^1\qquad\qquad \mbox{ and }\qquad\qquad
\wom\,:=\,\curl(\wu)\,=\,\d_1\wu^2\,-\,\d_2\wu^1\,.
\]
Now, we implement the following strategy: we perform $B^0_{\infty,1}$ estimates for the vorticity $\wom$ and show that, under the assumptions of Theorem \ref{th:g=1_2},
relation \eqref{est:Omega_to-prove} cannot hold, thus reaching the sought contradiction.

The advantage of the two-dimensional situation is that, as is well-known, the equation for the vorticity simplifies, due to special cancellations which imply
the vanishing of the stretching term. More precisely, if we start from the momentum equation in \eqref{eq:dd-E-wu} and we compute an 
equation for $\wom$, as $d=2$ we find
\begin{equation} \label{eq:wom}
\d_t\wom\,+\,e^{-\alpha t}\,\wu\cdot\nabla\wom\,=\,-\,\nabla^\perp\left(\frac{1}{\rho}\right)\cdot\nabla\wtilde\Pi\,,
\end{equation}
where, for any vector field $v\in\R^2$, $v=\big(v^1,v^2\big)$, we have denoted by $v^\perp\,:=\,\big(-v^2,v^1\big)$ its rotation of angle $\pi/2$.

At this point, in order to bound $\wom$ in $B^0_{\infty,1}$, we can apply the improved transport estimates from Theorem \ref{th:B^0}
to equation \eqref{eq:wom}: we get, for any $t\in[0,T[\,$, the inequality
\begin{equation} \label{est:wom_part}
\left\|\wom(t)\right\|_{B^0_{\infty,1}}\,\lesssim\,\left(\left\|\o_0\right\|_{B^0_{\infty,1}}\,+\,
\int^t_0\left\|\nabla^\perp\left(\frac{1}{\rho}\right)\cdot\nabla\wtilde\Pi\right\|_{B^0_{\infty,1}}\,\dd\t\right)\,
\left(1\,+\,\int^t_0e^{-\alpha\t}\,\left\|\nabla\wu\right\|_{L^\infty}\,\dd\t\right)\,,
\end{equation}
where we have set $\o_0\,=\,\curl(u_0)$.

The bound of the non-linear term appearing in the right-hand side of the previous inequality may appear complicated, as the space $B^0_{\infty,1}$ is \emph{not} algebra.  However, we can rewrite
\[
\nabla^\perp\left(\frac{1}{\rho}\right)\cdot\nabla\wtilde\Pi\,= \nabla^\perp\left(\frac{1}{\rho}-1\right)\cdot\nabla\wtilde\Pi\, = \,\curl\left(\left(\frac{1}{\rho}-1 \right)\,\nabla\wtilde\Pi\right)
\]
and argue as for \eqref{algebra:B^0_2}. Using a careful paraproduct decomposition (see also Section 4 of \cite{D:F}), together with paralinearisation estimate, one can actually prove that
\begin{equation} \label{est:dens-Pi}
\left\|\nabla^\perp\left(\frac{1}{\rho}\right)\cdot\nabla\wtilde\Pi\right\|_{B^0_{\infty,1}}\,\lesssim\,
\left\|\rho-1\right\|_{B^1_{\infty,1}}\,\left\|\nabla\wtilde\Pi\right\|_{B^0_{\infty,1}}\,.
\end{equation}
On the other hand, applying classical Besov estimates to the transport equation for $\rho$ in \eqref{eq:dd-E-wu}, which of course holds true also for
the quantity $\rho-1$, we find
\begin{equation} \label{est:dens-Besov}
\left\|\rho(t)-1\right\|_{B^1_{\infty,1}}\,\lesssim\,\left\|\rho_0-1\right\|_{B^1_{\infty,1}}\,
\exp\left(C\int^t_0e^{-\alpha\t}\,\left\|\wu\right\|_{B^1_{\infty,1}}\,\dd\t\right)\,.
\end{equation}

Our next goal is to bound the $B^0_{\infty,1}$ norm of $\nabla\wtilde\Pi$. Actually, we are going to bound its $B^1_{\infty,1}$ norm, for it is not clear to us how to take
really advantage of this lower order requirement. Computing an equation for $\nabla\wtilde\Pi$ from the second equation in \eqref{eq:dd-E-wu} yields
\begin{equation} \label{eq:Pi_g=1}
-\div\left(\frac{1}{\rho}\,\nabla\wtilde\Pi\right)\,=\,\div\left(e^{-\alpha t}\,\wu\cdot\nabla\wu\right)\,.
\end{equation}
Before applying inequality \eqref{gen:pres:ine} with $ F = e^{-\alpha t}\,\wu\cdot\nabla\wu $, let us notice that
\begin{align*}
\|e^{-\alpha t}\,\wu\cdot\nabla\wu\|_{L^2}\, &\leq\, e^{-\alpha t}\, \|\wu\|_{L^2}\|\nabla\wu\|_{L^{\infty}} \\
\|\rho \div(e^{-\alpha t}\,\wu\cdot\nabla\wu)\|_{B^{0}_{\infty,  1}}\, &\lesssim\,e^{-\alpha t}\, \| \rho \|_{B^{1}_{\infty,1}}\,\|\wu\|_{B^{1}_{\infty, 1}}^2\,,
\end{align*}
where we used $\div\left(e^{-\alpha t}\,\wu\cdot\nabla\wu\right)\,=\,e^{-\alpha t}\,\nabla\wu:\nabla\wu$, together with \eqref{algebra:B^0}
and \eqref{algebra:B^0_2}. These estimates together with \eqref{gen:pres:ine} imply the bound
\begin{align}
\label{est:Pi-B_g=1}
\left\|\nabla\wtilde\Pi\right\|_{B^1_{\infty,1}}\,
&\lesssim\,e^{-\alpha t}\,\left(1\,+\,\|\rho-1\|_{B^1_{\infty,1}}^\eta\right)\,\Big(\left\|\wu\right\|_{L^2}\,\left\|\nabla\wu\right\|_{L^\infty}\,+\,
\left\|\wu\right\|_{B^1_{\infty,1}}^2\Big)\,.
\end{align}

For notational convenience, let us set
\[
\wtilde U(t)\,:=\,\left\|\wu(t)\right\|_{L^2}\,+\,\left\|\wom(t)\right\|_{B^0_{\infty,1}}\qquad\qquad \mbox{ and }\qquad\qquad
\wtilde U_0\,:=\,\left\|u_0\right\|_{L^2}\,+\,\left\|\o_0\right\|_{B^0_{\infty,1}}\,.
\]
Notice that, for any $t\in[0,T[\,$, we have that $\wtilde U(t)$ is equivalent to $\left\|\wu(t)\right\|_{L^2\cap B^1_{\infty,1}}$.
Then, we can plug inequalities \eqref{est:dens-Besov} and \eqref{est:Pi-B_g=1} into \eqref{est:dens-Pi} and use the resulting expression
in \eqref{est:wom_part}: thanks to the energy inequality \eqref{est:wu-en}, we get, for any $t\in[0,T[\,$, the bound
\begin{align} \label{est:U-til}
\wtilde U(t)\,\lesssim\,\left(\wtilde U_0\,+\,R_0\,
\int^t_0e^{C_0\int^\t_0e^{-\alpha\s}\wtilde U(\s)\,\dd\s}\,e^{-\alpha\t}\,\left(\wtilde U(\t)\right)^2\,\dd\t\right)\,
\left(1\,+\,\int^t_0e^{-\alpha\t}\,\wtilde U(\t)\,\dd\t\right)\,,
\end{align}
where $C_0>0$ in the exponential term is a suitable constant and where we have defined
\[
R_0\,:=\,\left\|\rho_0-1\right\|_{B^1_{\infty,1}}\,\left(1\,+\,\left\|\rho_0-1\right\|^\eta_{B^1_{\infty,1}}\right)\,.
\]

At this point, we notice that using a classical procedure (see for instance the definition of the time $T_1$ in the next subsection) would lead to a
smallness requirement \emph{both} on the size of the initial non-homogeneity and on $\wtilde U_0/\alpha$. In order to avoid the appearing of the latter condition,
a more subtle argument is required, which we are going to present now.

Let us define the time $T_0>0$ as
\begin{align*}
T_0\,&:=\,\sup\Bigg\{t\in[0,T[\;\Big|\quad R_0\,\int^t_0e^{C_0\int^\t_0e^{-\alpha\s}\wtilde U(\s)\,\dd\s}\,e^{-\alpha\t}\,
\left(\wtilde U(\t)\right)^2\,\dd\t\,\leq\,4\,\wtilde{U}_0\Bigg\}\,.
\end{align*}
Then, from the previous estimate \eqref{est:U-til}, we get that
\begin{equation} \label{est:U-til_2}
\forall\,t\in[0,T_0]\,,\qquad\qquad \wtilde U(t)\,\leq\,C_1\,\wtilde U_0\,\left(1\,+\,\int^t_0e^{-\alpha\t}\,\wtilde U(\t)\,\dd\t\right)\,,
\end{equation}
where the constant $C_1$ only depends on $\rho_*$ and $\rho^*$.
Thus, if we define the function
\[
 f(t)\,:=\,1\,+\,\int^t_0e^{-\alpha\t}\,\wtilde U(\t)\,\dd\t\,,
\]
in turn we deduce that
\[
\forall\,t\in[0,T_0]\,,\qquad\qquad f'(t)\,\leq\,C_1\,\wtilde U_0\,e^{-\alpha t}\,f(t)\,.
\]
An application of the Gr\"onwall inequality and estimate \eqref{est:U-til_2} then imply
\begin{equation} \label{est:up-to-T_0}
\forall\,t\in[0,T_0]\,,\qquad\qquad f(t)\,\leq\,e^{\frac{C_1}{\alpha}\,\wtilde U_0}\qquad \mbox{ and }\qquad 
\wtilde U(t)\,\leq\,C_1\,\wtilde U_0\,e^{\frac{C_1}{\alpha}\,\wtilde U_0}\,.
\end{equation}

Using now the definition of the time $T_0$, we see that
\[
4\,\wtilde U_0\,=\,R_0\int^{T_0}_0e^{C_0\int^t_0e^{-\alpha\t}\wtilde U(\t)\,\dd\t}\,e^{-\alpha t}\,\left(\wtilde U(t)\right)^2\,\dt\,=\,
R_0\int^{T_0}_0e^{C_0\big(f(t)-1\big)}\,f'(t)\,\wtilde U(t)\,\dt\,.
\]
Making use of the bounds from \eqref{est:up-to-T_0} and integrating the function $ e^{C_0\big(f(t)-1\big)}\,f'(t) $ on the interval $[0, T_0]$, we get
\begin{align*}
4\,\wtilde U_0\,&\leq\,R_0\,\frac{1}{C_0}\,\left(\,e^{C_0\big(f(T_0)-1\big)}-1\right)\,C_1\,\wtilde U_0\,e^{\frac{C_1}{\alpha}\,\wtilde U_0} \\
&\leq\,R_0\,\frac{C_1}{C_0}\left(\,e^{C_0\left(\exp\left(C_1\frac{\wtilde U_0}{\alpha}\right)-1\right)}-1\right) \, \wtilde U_0\,e^{C_1\frac{\wtilde U_0}{\alpha}}\,.
\end{align*}
Thus, after defining $M\,:=\,\max\big\{C_0\,,\,C_1\,,\,\frac{C_1}{C_0}\big\}$, if $R_0$ and $U_0$ are such that the inequality
\[
R_0\,M\,e^{M\,\frac{\wtilde U_0}{\alpha}}\,
\left(\,e^{M\left(\exp\left(M\frac{\wtilde U_0}{\alpha}\right)-1\right)}-1\right)\,<\,4
\]
is satisfied, we see that one must have $T_0\equiv T$, which in turn implies, thanks to \eqref{est:up-to-T_0}, that
\[
\sup_{t\in[0,T[\,}\wtilde U(t)\,\approx\,\sup_{t\in[0,T[\,}\left\|\wu\right\|_{L^2\cap B^1_{\infty,1}}\,<\,+\,\infty\,.
\]
This bound obviously contradicts \eqref{est:Omega_to-prove}, thus implying that $T=+\infty$, \tsl{i.e.} the constructed solution is global.

Observe that, as a byproduct of our argument, we get also that
\[
 \forall\,t\in\R_+\,,\qquad\qquad \left\|u(t)\right\|_{L^2\cap B^1_{\infty,1}}\,\lesssim\,e^{-\alpha t}\,\left\|u_0\right\|_{L^2\cap B^1_{\infty,1}}\,
e^{\frac{C_1}{\alpha}\,\left\|u_0\right\|_{L^2\cap B^1_{\infty,1}}}\,,
\]
thanks to inequalities \eqref{est:wu-en} and \eqref{est:up-to-T_0}, 
and that
\[
\forall\,t\in\R_+\,,\qquad\qquad \rho_*\,\leq\,\rho(t)\,\leq\,\rho^*\qquad \mbox{ and }\qquad
\left\|\rho(t)-1\right\|_{B^1_{\infty,1}}\,\lesssim\, \left\|\rho_0-1\right\|_{B^1_{\infty,1}}\,,
\]
owing to \eqref{est:dens-Besov}. Going back to \eqref{est:Pi-B_g=1}, we also see that
\[
 \forall\,t\in\R_+\,,\qquad\qquad \left\|\nabla\Pi(t)\right\|_{L^2\cap B^1_{\infty,1}}\,\lesssim\,e^{-2\alpha t}\,.
\]

With those bounds at our disposal, by working on system \eqref{eq:dd-E-wu}, we can propagate the Besov regularity of $\rho$, $\wu$ and $\nabla\wtilde\Pi$,
globally in time, and deduce the sought regularity properties.

This observation concludes the proof of Theorem \ref{th:g=1_2}, provided we show similar decay estimates also for the higher order Besov norms of the solution.
We postpone the proof of this latter fact to Subsection \ref{ss:higher_decay}.

\subsection{The general case: proof of Theorem \ref{th:g=1_d}} 
\label{ss:g=1_d-geq3}

Let us consider now the case of any space dimension $d\geq2$ and prove global well-posedness under a smallness condition on the norm of the initial datum.
Now, in order to avoid the appearing of the $L^2$ norm of $u_0$ in the smallness condition, it is more convenient to work with the velocity formulation of the equations,
rather than with the vorticity.

Similarly to what done in the previous section, by arguing by contradiction, we are going to show that, under the smallness assumption of Theorem \ref{th:g=1_d},
the blow-up \eqref{est:Omega_to-prove} cannot happen.
To see this, we perform transport estimates directly on the equation for the velocity field, namely
\[
 \d_t\wtilde u\,+\,e^{-\alpha\,t}\,\wtilde u\cdot\nabla \wtilde u\,+\,\frac{1}{\rho}\,\nabla\wtilde\Pi\,=\,0\,.
\]
By mimicking the proof of Theorem \ref{th:transport}, from the previous equation it is plain to get the following bound, for any $t\in[0,T[\,$:
\begin{align*}
\left\| \wu(t) \right\|_{B^{1}_{\infty,1}}\, &\lesssim\, \left\| u_0\right\|_{B^{1}_{\infty,1}}\, +\,
\int_0^t \left( \| \rho\|_{B^{1}_{\infty,1}}\, \left\| \nabla \wtilde \Pi \right\|_{B^{1}_{\infty,1}}\, + e^{-\alpha \t  } \,
\| \wu \|_{B^{1}_{\infty,1}}^2 \right) \, \dd\t\,, 
%
%
\end{align*}    
for an implicit multiplicative constant also depending on $\rho_*$ and $\rho^*$ appearing in \eqref{eq:vacuum}.

In order to estimate the pressure term, we can still use \eqref{est:Pi-B_g=1}, up to take a higher $\eta=\eta(d)$, depending only on the space dimension $d$.
After setting $ R_1\, :=\, 1\,+\,\left\| \rho_0-1\right\|_{B^{1}_{\infty,1}}^{\eta} $, we combine estimate \eqref{est:Pi-B_g=1} with \eqref{est:dens-Besov}
to get
\[
\left\|\nabla\wtilde\Pi\right\|_{B^1_{\infty,1}}\,\lesssim\,
e^{-\alpha\t}\,R_1\,\left(\left\|\wtilde u\right\|_{L^2}\,+\,\left\|\wtilde u\right\|_{B^1_{\infty,1}}\right)
\exp\left(\eta\,C\int^\t_0e^{-\alpha\s}\,\left\|\wtilde u\right\|_{B^1_{\infty,1}}\,\dd\s\right)\,,
\]
Up to change $\eta$ into $\eta+1$ in the definition of $R_1$, making use of \eqref{est:wu-en} we then obtain
\begin{align*}
\left\| \wu(t) \right\|_{B^{1}_{\infty,1}}\, &\lesssim\, 
\left\| u_0\right\|_{B^{1}_{\infty,1}}\, +\, R_1\, \int_0^t e^{-\alpha \t  }\, e^{C\int_0^{\t} e^{-\alpha\sigma}\,\|\wu\|_{B^{1}_{\infty,1}} \, \dd \s}
\left(\left\|u_0\right\|_{L^2}+ \left\| \wu \right\|_{B^{1}_{\infty,1}} \right)\,\left\| \wu \right\|_{B^{1}_{\infty,1}} \, \dd \t\,,
\end{align*}
and an application of the Gr\"onwall lemma finally yields, for any $t\in[0,T[\,$, the bound 
\begin{align}
\label{est:u-partial}
\left\| \wu(t) \right\|_{B^{1}_{\infty,1}}\, &\lesssim\,\left\| u_0\right\|_{B^{1}_{\infty,1}}\,
\exp \left( R_1\, \int_0^t e^{-\alpha \t  }\, e^{C\int_0^{\t} e^{-\alpha\sigma}\,\left\|\wu\right\|_{B^{1}_{\infty,1}} \, \dd \sigma}\,
\left(\left\|u_0\right\|_{L^2}\,+\, \left\| \wu \right\|_{B^{1}_{\infty,1}} \right) \, \dd \t \right)\,.
\end{align}    

With this estimate at hand, the rest of the argument follows the main lines of the one employed in Subsection \ref{ss:g=1_d=2} above.
Let us define the time $T_1$ as
\begin{align*}
{T}_1\, :=\,\sup\Bigg\{t\in[0,T[\;\Big|\quad \int^t_0e^{-\alpha\t}\,\| \wu(t) \|_{B^{1}_{\infty,1}} \,\dd\t\,\leq\,2 \Bigg\}\,.
\end{align*}
Then, from \eqref{est:u-partial} we deduce, for $ t \in [0,{T_1} ] $, the inequality
\begin{align}
\label{est:u-unif_T_1}
\left\| \wu(t) \right\|_{B^{1}_{\infty,1}}\, \lesssim \, \left\| u_0\right\|_{B^{1}_{\infty,1}}\,
\exp\left(R_1\, e^{2C}\,\left(\frac{1}{\alpha}\,\| u_0 \|_{L^2} \,+\,1 \right)\right)\,.
\end{align}
This in particular implies that, for all $ t \in [0, {T}_1] $, it holds
\begin{equation*}
\int^t_0e^{-\alpha\t}\,\left\| \wu(\t) \right\|_{B^{1}_{\infty,1}} \,\dd\t \,\lesssim \,
\frac{1}{\alpha}\,\left\| u_0\right\|_{B^{1}_{\infty,1}}\,
\exp\left(R_1\, e^{2C}\,\left(\frac{1}{\alpha}\,\| u_0 \|_{L^2} \,+\,1 \right)\right)\,.
\end{equation*}
Now, by definition of ${T}_1 $, we must have
\begin{equation*}
  2\,\leq\,\frac{1}{\alpha}\,\left\| u_0\right\|_{B^{1}_{\infty,1}}\,
\exp\left(R_1\, e^{2C}\,\left(\frac{1}{\alpha}\,\| u_0 \|_{L^2} \,+\,1 \right)\right)\,,
\end{equation*}
which, however, cannot be verified if the right-hand side is small enough.
This implies that $ {T}_1 = T $, hence 
\[
 \sup_{t\in[0,T[\,}\left\|\wtilde u(t)\right\|_{B^1_{\infty,1}}\,<\,+\,\infty\,,
\]
owing to \eqref{est:u-unif_T_1}, but this property is clearly in contradiction with \eqref{est:Omega_to-prove}.
Therefore, we infer that the constructed solution must be globally defined in time.

In addition, analogously to what done in Subsection \ref{ss:g=1_d=2}, we notice that our argument also implies the following bounds:
first of all,
\[
\forall\,t\in\R_+\,,\qquad\qquad 
\left\|u(t)\right\|_{L^2\cap B^1_{\infty,1}}\,\lesssim\,e^{-\alpha\,t}\,\left\|u_0\right\|_{L^2\cap B^1_{\infty,1}}\,,
\]
owing to inequalities \eqref{est:wu-en} and \eqref{est:u-unif_T_1}, and secondly
\[
\forall\,t\in\R_+\,,\qquad\qquad \rho_*\leq\rho(t)\leq\rho^*\qquad \mbox{ and }\qquad
\left\|\rho(t)-1\right\|_{B^1_{\infty,1}}\,\lesssim\,\left\|\rho_0-1\right\|_{B^1_{\infty,1}}\,,
\]
owing to \eqref{est:dens-Besov}. Similarly, we obtain the time-decay of the Besov norm of $\nabla\Pi$.
With those bounds at hand, by working on system \eqref{eq:dd-E-wu}, we can propagate the (higher order, in the case $s>1$)
Besov norms of $\rho$, $\wu$ and $\nabla\wtilde\Pi$, globally in time,
getting the sought regularity properties.

Remark that, in order to complete the proof to Theorem \ref{th:g=1_d}, it remains us to prove the time decays of the higher order Besov norms of the solution.
As it was the case in Subsection \ref{ss:g=1_d=2}, this fact will be shown in Subsection \ref{ss:higher_decay} below.

%

\subsection{Decay estimates for higher order Besov norms} \label{ss:higher_decay}

In this subsection, we prove that not only the $B^1_{\infty,1}$ norm of $u$ and $\nabla \Pi$ decay exponentially in time, but also
the higher order Besov norms $B^s_{p,r}$. This will conclude the proof of both Theorems \ref{th:g=1_d} and \ref{th:g=1_2}.

\begin{lemma}
\label{per:of:norm}
Let $d\geq2$, $\g=1$ and $\alpha>0$ fixed. Take indices $(s,p,r)\,\in\,\R\times[1,+\infty]\times[1,+\infty]$ such that $p\geq2$ and on of the two conditions in \eqref{cond:Lipschitz}
is verified.
Take an initial datum $\big(\rho_0,u_0\big)$ such that conditions \eqref{eq:in-datum}-\eqref{eq:vacuum} are satisfied, for two suitable constants
$0<\rho_*\leq\rho^*$. Assume that $u_0\in L^2\cap B^s_{p,r}$ and $\nabla\rho_0\in B^{s-1}_{p,r}$ and that, in addition, there exists a unique
global in time solution $\big(\rho,u,\nabla\Pi\big)$ to system \eqref{eq:dd-E} associated to that initial datum,
verifying the regularity properties (i)-(ii)-(iii) of Theorem \ref{th:g=1_d}.
Finally, assume that there exists a constant $C>0$ such that
\begin{equation} \label{ass:decay-low}
\forall\,t\geq 0\,,\qquad\qquad \left\|\nabla\rho(t)\right\|_{B^0_{\infty,1}}\,+\,e^{\alpha\,t}\,\left\|u(t)\right\|_{L^2\cap B^1_{\infty,1}}\,+\,e
^{2\,\alpha\,t}\,\left\|\nabla\Pi(t)\right\|_{L^2\cap B^1_{\infty,1}}\,\leq\,C\,.
\end{equation}

Then, for a possibly different constant $\wtilde C>0$, one has
\[
\forall\,t\geq 0\,,\qquad\qquad
\left\|\nabla\rho(t)\right\|_{B^{s-1}_{p,r}}\,+\,e^{\alpha\,t}\,\left\|u(t)\right\|_{L^2\cap B^s_{p,r}}\,+\,
e^{2\,\alpha\,t}\,\left\|\nabla\Pi(t)\right\|_{L^2\cap B^s_{p,r}}\,\leq\,\wtilde{C}\,.
\]
\end{lemma}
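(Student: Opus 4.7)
The plan is to work in the rescaled variables \eqref{eq:change}–\eqref{eq:dd-E-wu}, for which the hypothesis \eqref{ass:decay-low} becomes uniform control $\|\wu(t)\|_{L^2\cap B^1_{\infty,1}} + e^{\alpha t}\|\nabla\wtilde\Pi(t)\|_{L^2\cap B^1_{\infty,1}} \leq C$, together with $\|\nabla\rho(t)\|_{B^0_{\infty,1}}\leq C$ and $\rho_*\leq \rho\leq\rho^*$. Setting
\[
M(t)\,:=\,\|\wu(t)\|_{B^s_{p,r}}\,,\qquad N(t)\,:=\,\|\nabla\rho(t)\|_{B^{s-1}_{p,r}}\,,
\]
I will derive a closed system of integral inequalities for $M$ and $N$ whose source terms all carry an integrable factor $e^{-\alpha\tau}$. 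Undoing the rescaling then converts the resulting uniform-in-time bound on $M+N$ into the claimed exponential decay for $u$ in $B^s_{p,r}$, and plugging back into the pressure estimate yields the decay for $\nabla\Pi$.

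First, I apply transport estimates to $\partial_j\rho$ exactly as in \eqref{lin:est:nab:a}, observing that the transport field is $e^{-\alpha\tau}\wu$: the Gr\"onwall-type prefactor is then bounded by $\exp\bigl(C\int_0^t e^{-\alpha\tau}\|\nabla\wu\|_{L^\infty}\,d\tau\bigr)\leq e^{C/\alpha}$, and the forcing $e^{-\alpha\tau}\nabla\wu\cdot\nabla\rho$ produces, via $\|\nabla\rho\|_{L^\infty}\leq C$,
\[
N(t)\,\leq\,e^{C/\alpha}\Bigl(\|\nabla\rho_0\|_{B^{s-1}_{p,r}}\,+\,C\int_0^t e^{-\alpha\tau}\,M(\tau)\,d\tau\Bigr)\,.
\]
Second, I apply Lemma \ref{lem:pres:est:Besov} to \eqref{eq:Pi_g=1} with $F=e^{-\alpha t}\wu\cdot\nabla\wu$: the bounds $\|F\|_{L^2}\leq C e^{-\alpha t}$, $\|\nabla\wtilde\Pi\|_{L^\infty}\leq C e^{-\alpha t}$ (from \eqref{ass:decay-low}) and tame estimates for $\|\rho\,\div F\|_{B^{s-1}_{p,r}}\lesssim e^{-\alpha t}\,\|\wu\|_{B^1_{\infty,1}}\,M(t)$ combined with $\|\nabla\rho\|_{L^\infty}^\eta\leq C^\eta$ give
\[
\|\nabla\wtilde\Pi(t)\|_{B^s_{p,r}}\,\leq\,C\,e^{-\alpha t}\,\bigl(1\,+\,M(t)\,+\,N(t)\bigr)\,.
\]
Third, I apply transport estimates to the equation for $\wu$ in \eqref{eq:dd-E-wu}: using $\int_0^\infty e^{-\alpha\tau}\|\nabla\wu\|_{L^\infty}\,d\tau<+\infty$ to control the Gr\"onwall exponential, and tame estimates (together with Proposition \ref{p:comp} for $1/\rho$) for the forcing, I obtain
\[
M(t)\,\leq\,C\,\|u_0\|_{B^s_{p,r}}\,+\,C\int_0^t e^{-\alpha\tau}\bigl(1+M(\tau)+N(\tau)\bigr)\,d\tau\,.
\]
Adding the inequalities for $M$ and $N$ and applying Gr\"onwall's lemma, using that the kernel $e^{-\alpha\tau}$ is integrable on $\R_+$ with norm $1/\alpha$, yields the uniform bound $\sup_t (M(t)+N(t))\leq \wtilde C$; reinserting this into the pressure estimate produces $\|\nabla\wtilde\Pi(t)\|_{B^s_{p,r}}\leq \wtilde{C} e^{-\alpha t}$, which on undoing \eqref{eq:change} completes the proof.

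I expect the main obstacle to lie in the pressure estimate: since the source $F$ is quadratic in $\wu$, a naive tame estimate for $\rho\,\div F$ would produce a term proportional to $\|\wu\|_{B^s_{p,r}}^2$ and prevent Gr\"onwall closure. The cure is to systematically reserve one of the two factors of $\wu$ in every paraproduct/remainder arising from $\rho\,\div F$ to the lower-order space $B^1_{\infty,1}$, where \eqref{ass:decay-low} provides uniform-in-time control, so that only a linear factor of $M(t)$ survives, multiplied by the integrable weight $e^{-\alpha\tau}$. This is the same bilinear splitting principle used to establish the continuation criterion of Lemma \ref{lem:cont:pri}, now combined with the low-regularity decay hypothesis \eqref{ass:decay-low} to make every remainder integrable in time on $\R_+$.
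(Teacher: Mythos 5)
Your proposal is correct and follows essentially the same route as the paper's proof: pass to the rescaled unknowns $\wu=e^{\alpha t}u$, $\wtilde\Pi=e^{\alpha t}\Pi$, run transport estimates for $\nabla\rho$ and $\wu$ plus the elliptic estimate of Lemma \ref{lem:pres:est:Besov} for $\nabla\wtilde\Pi$, and use the low-regularity decay hypothesis \eqref{ass:decay-low} to render every source term linear in $\|\wu\|_{B^s_{p,r}}+\|\nabla\rho\|_{B^{s-1}_{p,r}}$ with an integrable weight $e^{-\alpha\tau}$, closing by Gr\"onwall. Your closing remark about reserving one factor of the quadratic pressure source to the $B^1_{\infty,1}$ norm controlled by \eqref{ass:decay-low} is precisely the mechanism the paper relies on.
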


\begin{proof}
Without loss of generality, we can assume to have either $2\leq p<+\infty$, or $p=+\infty$ and $s>1$ (otherwise, the decay estimates have already been proved
in the previous Sections \ref{ss:g=1_d-geq3} and \ref{ss:g=1_d=2}). In order to prove the previous statement, we are going to repeat the main steps of the proof of the continuation criterion, trying to
making each time a lower order term appear, which possesses the right decay property by assumption.

We start by estimating the density gradient: by using the notation introduced in \eqref{eq:change}, from inequality \eqref{lin:est:nab:a} we gather
\begin{equation*}
\left\|\nabla \rho(t)\right\|_{B^{s-1}_{p,r}}\,\lesssim\,\left\|\nabla\rho_0\right\|_{B^{s-1}_{p,r}}\,+\,
\int^t_0 e^{-\alpha \t}\,\left( \left\|\nabla \wtilde{u} \right\|_{L^{\infty}}\,\|\nabla \rho\|_{B^{s-1}_{p,r}}\, +\,
\|\nabla \rho \|_{L^{\infty}}\, \left\|\nabla \wtilde{u}\right\|_{B^{s-1}_{p,r}}\right)\,\dt
\end{equation*}
for any time $t\geq0$. Notice that, as $s>1$ always, the space $B^{s-1}_{p,r}$ is an algebra.

By transport estimates, together with commutator and tame estimates, we can bound the norm of the velocity field, for any time $t\geq0$, as follows:
\begin{align*}
\left\|\wtilde{u}(t)\right\|_{B^{s}_{p,r}}\,&\lesssim\,
\left\|\wtilde{u}_0\right\|_{B^{s}_{p,r}}\,+\,\int^t_0 e^{-\alpha \t}\,\Big(\left\|\nabla \wtilde{u} \right\|_{L^{\infty}}\,
\left\|\wtilde{u}\right\|_{B^{s}_{p,r}} \\ 
&\qquad\qquad\qquad\qquad\qquad\qquad +\,e^{\alpha \t}\,\frac{1}{\rho_*}\,\left\|\nabla\wtilde\Pi\right\|_{B^s_{p,r}}\,
+\,e^{\alpha \t}\,\left\|\nabla\wtilde\Pi\right\|_{L^\infty}\,\left\|\nabla\rho\right\|_{B^{s-1}_{p,r}}\Big)\,\dt\,. 
\end{align*}

At this point, we make use of assumption \eqref{ass:decay-low} in the previous estimates: taking advantage also of the inequality of Lemma \ref{lem:pres:est:Besov}
applied to the pressure equation \eqref{eq:Pi_g=1}, we obtain
\begin{align*}
\left\|\nabla \rho(t)\right\|_{B^{s-1}_{p,r}}\,&\lesssim\,\left\|\nabla\rho_0\right\|_{B^{s-1}_{p,r}}\,+\,
\int^t_0 e^{-\alpha \t}\,\left( \left\|\nabla \rho\right\|_{B^{s-1}_{p,r}}\, +\, \left\|\wtilde{u}\right\|_{B^{s}_{p,r}}\right)\,\dt \\
\left\|\wtilde{u}(t)\right\|_{B^{s}_{p,r}}\,&\lesssim\,\left\|\wtilde{u}_0\right\|_{B^{s}_{p,r}}\,+\,\int^t_0 e^{-\alpha \t}\,
\Big(\left\|\wtilde{u}\right\|_{B^{s}_{p,r}} \, + \, \left\|\nabla\rho\right\|_{B^{s-1}_{p,r}}\Big)\,\dt\,. 
\end{align*}
Then, the Gr\"onwall lemma implies the desired estimate: for all $t\geq0$, one has
\begin{equation*}
\left\|\nabla \rho(t)\right\|_{B^{s-1}_{p,r}}\, +\, \left\|\wtilde{u}(t)\right\|_{B^{s}_{p,r}}\, \lesssim\,
\left\|\nabla \rho_0 \right\|_{B^{s-1}_{p,r}}\, +\, \left\|\wtilde{u}_0 \right\|_{B^{s}_{p,r}}\,.
\end{equation*}
Finally, estimate \eqref{gen:pres:ine} applied to \eqref{eq:Pi_g=1} implies the exponential decay also for the pressure term.
The lemma is thus completely proved.
\end{proof}

\section{The case $\g=0$: proof of Theorem \ref{th:g=0_d}} \label{s:g=0}

The proof of the global existence in the case $\g=0$ is more delicate. As a matter of fact, when $\g=0$, the momentum equation reads
\begin{equation} \label{eq:mom_g=0}
\rho\,\d_tu\,+\,\rho\,u\cdot\nabla u\,+\,\nabla\Pi\,+\,\alpha\,u\,=\,0\,.
\end{equation}
Thus, even though the term $\mf D^0_\alpha(\rho,u)\,=\,\alpha\,u$ is still a damping term, 
it creates complications when looking for higher order estimates for the solution. As a matter of fact, for obtaining higher order estimates on $u$,
we divide the momentum equation by $\rho$, but, when doing so, $\mf D^0_\alpha(\rho,u)$ presents a variable coefficient $\frac{1}{\rho}$ in front,
keep in mind equation \eqref{eq:mom_2}. As a consequence, it is not clear which is the optimal decay rate for the solution of \eqref{eq:mom_g=0}.

Recall that, given an initial datum $\big(\rho_0,u_0\big)$ verifying the assumptions of Theorem \ref{th:g=0_d}, we are able to construct a local solution
$\big(\rho,u,\nabla\Pi\big)$ defined on some maximal time interval $[0,T[\,$. As done in Section \ref{s:g=1}, our goal is to prove that $T=+\infty$
by resorting to the continuation criterion of Lemma \ref{lem:cont:pri}. By contradiction, in what follows we assume that $T<+\infty$ instead.

\subsection{Preliminaries and reformulation} \label{ss:prel-ref}

To begin with, we observe that the bound \eqref{est:rho-inf} still holds true for $\rho$, owing to the transport structure of the mass conservation equation:
\begin{equation} \label{est:rho-inf_0}
\forall\,t\in[0,T[\;,\qquad\qquad \rho_*\,\leq\,\rho(t)\,\leq\,\rho^*\,.
\end{equation}

Next, using that $ \mathfrak{D}_{\alpha}^{0}(\rho, u ) = \alpha u $ in \eqref{ene:est:glob:form}, we deduce
\[
\frac{1}{2}\,\frac{\dd}{\dt}\int_{\R^d}\rho\,|u|^2\,\dx\,+\,\alpha\,\int_{\R^d}|u|^2\,\dx\,=\,0.
\]
Thus, after defining
\[
 \alpha_*\,:=\,\frac{\alpha}{\rho^*}\,,
\]
we easily find
\[
\frac{1}{2}\,\frac{\dd}{\dt}\left\|\sqrt{\rho}\,u\right\|_{L^2}^2\,+\,\alpha_*\,\left\|\sqrt{\rho}\,u\right\|_{L^2}^2\,\leq\,0\,,
\]
which in turn implies, thanks also to the use of \eqref{est:rho-inf_0}, the inequality
\begin{equation} \label{est:u-L^2_0}
\forall\,t\in[0,T[\;,\qquad\qquad
\left\|u(t)\right\|_{L^2}\,\lesssim\,e^{-\alpha_*t}\,\left\|u_0\right\|_{L^2}\,.
\end{equation}

Next, we turn our attention to the higher order estimates. The two key ideas in order to bypass the difficulty mentioned above are the following ones. First of all,
we decompose the damping term $\mf D^0_\alpha(\rho,u)$ into two parts: one of them will give an exponential decay (even though with a non-optimal rate), while
the other one will be a positive term, yielding a contribution with the right sign in the estimates.
Once implemented in the Besov functional framework, this procedure creates remainder terms, which are however of lower order:
the second main ingredient of the proof is then to use an interpolation argument to make a $L^2$ norm of $u$ appear, for which one can use the (optimal) decay
estimate \eqref{est:u-L^2_0}.

Let us make explicit what we have just announced. Given any
\[
0\,<\,\beta\,\leq\,\alpha_*\,:=\,\frac{\alpha}{\rho^*}\,,
\]
we can write the momentum equation \eqref{eq:mom_g=0} as
\[
\d_tu\,+\,u\cdot\nabla u\,+\,\frac{1}{\rho}\,\nabla\Pi\,+\,\left(\frac{\alpha}{\rho}\,-\,\beta\right)\,u\,+\,\beta\,u\,=\,0\,.
\]
In other terms, if we define
\[
u_\beta\,:=\,e^{\bt\,t}\,u\qquad\qquad \mbox{ and }\qquad\qquad \Pi_\bt\,:=\,e^{\bt\,t}\,\Pi\,,
\]
we get the equation
\begin{equation} \label{eq:u_beta}
\d_tu_\bt\,+\,e^{-\bt t}\,u_\bt\cdot\nabla u_\bt\,+\,\frac{1}{\rho}\,\nabla\Pi_\bt\,+\,\left(\frac{\alpha}{\rho}\,-\,\beta\right)\,u_\bt\,=\,0\,.
\end{equation}
Analogously, the mass conservation equation can be recasted as
\begin{equation} \label{eq:mass-beta}
\d_t\rho\,+\,e^{-\bt t}\,u_\bt\cdot\nabla\rho\,=\,0\,.
\end{equation}

Now, similarly to what done in Section \ref{s:g=1}, we observe that the failure of condition \eqref{est:Du} of Lemma \ref{lem:cont:pri}
implies that
\begin{equation} \label{est:to-prove_0}
\int^T_0\left\|u(t)\right\|_{B^1_{\infty,1}}\,\dt\,=\,\int^T_0e^{-\,\bt\,t}\,\left\|u_\bt(t)\right\|_{B^1_{\infty,1}}\,\dt\,=\,+\,\infty\,.
\end{equation}
Thus, we are going to bound the Besov norm of $u_\bt$ and show that, under the smallness condition of Theorem \ref{th:g=0_d}, this cannot happen.

\subsection{Estimates for the Besov norm of the velocity field} \label{ss:est-Besov}

In this part, we are going to bound the Besov norm of $u_\bt$, where we recall that the value of $\beta\in\,]0,\alpha_*]$ has not been fixed yet.
We have to proceed carefully, in order to exploit (as anticipated above) the positive sign of the last term appearing in the left-hand side of equation \eqref{eq:u_beta}.

Applying the operator $\Delta_j$ of a Littlewood-Paley decomposition (see Section \ref{s:tools}) to relation \eqref{eq:u_beta}, we find
\begin{equation} \label{eq:D_ju_beta}
\d_t\Delta_ju_\bt\,+\,e^{-\bt t}\,u_\bt\cdot\nabla\Delta_ju_\bt\,+\,\left(\frac{\alpha}{\rho}\,-\,\bt\right)\,\Delta_ju_\bt\,=\,
-\,\Delta_j\mc P_\bt\,+\,\mbb C^1_j\,+\,\mbb C^2_j\,,
\end{equation}
where we have defined the pressure-density term
\[
 \mc P_\bt\,:=\,\frac{1}{\rho}\,\nabla\Pi_\bt
\]
and the two commutator terms
\begin{align*}
 \mbb C^1_j\,:=\,e^{-\bt t}\,\left[u_\bt\cdot\nabla\,,\,\Delta_j\right]u_\bt\qquad\qquad \mbox{ and }\qquad\qquad
 \mbb C^2_j\,:=\,\alpha\,\left[\frac{1}{\rho}\,,\,\Delta_j\right]u_\bt\,.
\end{align*}
Now, we perform a $L^q$ estimate, for $2\leq q<+\infty$, on the transport-type equation \eqref{eq:D_ju_beta}: 
we get
\begin{align*}
&\left\|\Delta_ju_\bt(t)\right\|_{L^q}\,+\,
\int^t_0\int_{\R^d}\left(\frac{\alpha}{\rho}\,-\,\bt\right)\,\frac{\left|\Delta_ju_\bt\right|^q}{\left\|\Delta_ju_\bt\right\|^{q-1}_{L^q}}\,\dx\,\dd\t \\
&\qquad\qquad\qquad \lesssim\,\left\|\Delta_ju_\bt(0)\right\|_{L^q} \,+\,
\int^t_0\Big(\left\|\Delta_j\mc P_\bt\right\|_{L^q} \,+\,\left\|\mbb C^1_j\right\|_{L^q} \,+\,\left\|\mbb C^2_j\right\|_{L^q} \Big)\,\dd\t\,
\end{align*}
for any $t\in[0,T[\,$. Using the positivity property $\frac{\alpha}{\rho}\,-\,\bt\geq0$ and sending $q\to+\infty$ yield
\[
\left\|\Delta_ju_\bt(t)\right\|_{L^\infty}\,\lesssim\,\left\|\Delta_ju_\bt(0)\right\|_{L^\infty}\,+\,
\int^t_0\Big(\left\|\Delta_j\mc P_\bt\right\|_{L^\infty}\,+\,\left\|\mbb C^1_j\right\|_{L^\infty}\,+\,\left\|\mbb C^2_j\right\|_{L^\infty}\Big)\,\dd\t\,.
\]
From the previous bound, by multiplying both sides by $2^j$ and summing up over $j\geq-1$, in turn we deduce an estimate for the $B^1_{\infty,1}$ norm of $u_\bt$:
for any $t\in[0,T[\,$, we obtain
\begin{align} \label{est:u_bt-partial}
\left\|u_\bt(t)\right\|_{B^1_{\infty,1}}\,\lesssim\,\left\|u_0\right\|_{B^1_{\infty,1}}\,+\,\int^t_0\left(\left\|\mc P_\bt\right\|_{B^1_{\infty,1}}\,+\,
\sum_{j=-1}^{+\infty}2^j\,\left\|\mbb C^1_j\right\|_{L^\infty}\,+\,\sum_{j=-1}^{+\infty}2^j\,\left\|\mbb C^2_j\right\|_{L^\infty}\right)\,\dd\t\,.
\end{align}

Let us now bound one by one the terms appearing under the integral on the right-hand side of the previous inequality.

\subsubsection*{The commutator terms}

The estimate of the first commutator term is classical: by using Lemma \ref{l:CommBCD}, we get
\[
\sum_{j=-1}^{+\infty}2^j\,\left\|\mbb C^1_j\right\|_{L^\infty}\,\lesssim\,e^{-\bt t}\,\left\|\nabla u_\bt\right\|_{L^\infty}\,\left\|u_\bt\right\|_{B^1_{\infty,1}}\,.
\]

Next, let us focus on the second commutator term appearing in \eqref{est:u_bt-partial}.
By Lemma \ref{com:damping} and the paraliearisation result of Proposition \ref{p:comp},
we can bound 
it as follows:
\begin{align*}
\sum_{j=-1}^{+\infty}2^j\,\left\|\mbb C^2_j\right\|_{L^\infty}\,&\lesssim\,\alpha\,\left\|\rho-1\right\|_{B^1_{\infty,1}}\,\left\|u_\bt\right\|_{B^{0}_{\infty,1}}  \\
&\lesssim\,\alpha\,\left\|\rho-1\right\|_{B^1_{\infty,1}}\,\left\|u_\bt\right\|^\theta_{L^2}\,\left\|u_\bt\right\|^{1-\theta}_{B^{1}_{\infty,1}} \\
&\lesssim\,\alpha\,
\left\|\rho-1\right\|_{B^1_{\infty,1}}\,e^{-\theta(\alpha_*-\bt)t}\,\left\|u_0\right\|^\theta_{L^2}\,\left\|u_\bt\right\|^{1-\theta}_{B^{1}_{\infty,1}}\,,
\end{align*}
for a suitable exponent $\theta=\theta(d)\in\,]0,1[\,$ only depending on the space dimension $d\geq2$.
Indeed, one has the following chain of continuous embeddings:
$B^1_{\infty,1}\hookrightarrow B^0_{\infty,1}\hookrightarrow B^{-d/2-\de}_{\infty,1}$, where $\de>0$ is arbitrarily small.
Hence, noticing that $L^2\hookrightarrow B^{-d/2}_{\infty,2}\hookrightarrow B^{-d/2-\de}_{\infty,1}$, we can apply an interpolation argument
and deduce the existence of such an exponent $\theta$. In addition, in passing from the penultimate inequality to the last one,
we have used the exponential decay given by \eqref{est:u-L^2_0}.

In order to simplify the notation in the argument below, from now on we fix the value of $\beta=\bt_*\in\,]0,\alpha_*[\,$ such that
\begin{equation} \label{eq:beta-theta}
\beta_*\,=\,\theta\,\big(\alpha_*\,-\,\bstar\big)\qquad\qquad 
\Longrightarrow\qquad\qquad  \bt_*\,=\,\frac{\theta}{1+\theta}\,\alpha_*\,.
\end{equation}
This choice implies that the two commutator terms have the same exponential decay.
Then, plugging the previous bounds into \eqref{est:u_bt-partial}, we infer the inequality
\begin{align}
\label{est:u_*_partial}
\left\|u_{\bstar}(t)\right\|_{B^1_{\infty,1}}\,&\lesssim\,\left\|u_0\right\|_{B^1_{\infty,1}}\,+\,
\int^t_0\Bigg(\left\|\rho\right\|_{B^1_{\infty,1}}\,\left\|\nabla\Pi_{\bstar}\right\|_{B^1_{\infty,1}}\,+\,
e^{-\bt_*\t}\,\left\|\nabla u_{\bstar}\right\|_{L^\infty}\,\left\|u_{\bstar}\right\|_{B^1_{\infty,1}} \\
\nonumber
&\qquad\qquad\qquad\qquad\qquad\qquad \,+\,\alpha\,e^{-\bt_*\t}\,
\left\|\rho-1\right\|_{B^1_{\infty,1}}\,\left\|u_0\right\|^\theta_{L^2}\,\left\|u_{\bt_*}\right\|^{1-\theta}_{B^1_{\infty,1}}\Bigg)\,\dd\t
\end{align}
for all $t\in[0,T[\,$, where we have also used that
\[
\left\|\mc P_\bt\right\|_{B^1_{\infty,1}}\,\lesssim\,\left\|\rho\right\|_{B^1_{\infty,1}}\,\left\|\nabla\Pi_\bt\right\|_{B^1_{\infty,1}}\,,
\]
as a consequence of the fact that $B^1_{\infty,1}$ is an algebra and of Proposition \ref{p:comp}.

\subsubsection*{Bounds for the density and the pressure} 

We still have to bound the density and the pressure gradient appearing in estimate \eqref{est:u_*_partial}.
The analysis is similar to the one performed in the previous section, so we will only sketch it.

First of all, we observe that, from the mass equation \eqref{eq:mass-beta} and classical transport estimates in Besov spaces, we have,
for any $t\in[0,T[\,$, the inequality
\begin{equation} \label{est:dens_*}
\left\|\rho(t)-1\right\|_{B^1_{\infty,1}}\,\lesssim\,\left\|\rho_0-1\right\|_{B^1_{\infty,1}}\,
\exp\left(C\int^t_0e^{-\bt_*\t}\,\left\|u_{\beta_*}(\t)\right\|_{B^1_{\infty,1}}\,\dd\t\right)\,.
\end{equation}

As for the pressure, we start by writing an equation for $\Pi_{\bstar}$: by applying the divergence operator to both sides of relation \eqref{eq:u_beta}, we find
\[
-\,\div\left(\frac{1}{\rho}\,\nabla\Pi_{\bstar}\right)\,=\,\div\left(e^{-\bt_*t}\,u_{\bstar}\cdot\nabla u_{\bstar}\,+\,\frac{\alpha}{\rho}\,u_{\bstar}\right)\,,
\]
owing to the fact that $\div u_{\bstar}\,=\,0$.
Before applying estimate \eqref{gen:pres:ine} with $ F\, =\,e^{-\bt_*t}\,u_{\bstar}\cdot\nabla u_{\bstar}\,+\,\frac{\alpha}{\rho}\,u_{\bstar} $, let us notice that
\begin{align}
\label{L2:F}
\left\| e^{-\bt_*t}\,u_{\bstar}\cdot\nabla u_{\bstar}\,+\,\frac{\alpha}{\rho}\,u_{\bstar} \right\|_{L^2}\, &\lesssim\, e^{-\bt_*t}\,\left\|u_{\bstar}\right\|_{L^2}\,
\left\|\nabla u_{\bstar}\right\|_{L^{\infty}}\,+\, \alpha\, \left\|u_{\bstar}\right\|_{L^2} \\
\nonumber
&\lesssim \, e^{-\bt_*t}\,\left\|u_{\bstar}\right\|_{L^2}\,\left\|\nabla u_{\bstar}\right\|_{L^{\infty}}\,+\, \alpha\,  e^{-\bt_*t}\, \left\|u_{0}\right\|_{L^2}\,,
\end{align}
where in the last inequality we have used \eqref{est:u-L^2_0} and the fact that  $ \bstar = \theta(\alpha_ *-\bstar) \leq (\alpha_ *-\bstar) $. Moreover,
by arguing similarly to Section \ref{ss:g=1_d=2}, see below equation \eqref{eq:Pi_g=1}, one can bound
\begin{equation}
\label{B1:F}
\left\| \rho\, \div(e^{-\bt_*t}\,u_{\bstar}\cdot\nabla u_{\bstar}) \right\|_{B^0_{\infty,1}}\, \lesssim\,
e^{-\bt_*t}\,\| \rho \|_{B^{1}_{\infty,1}}\, \left\|u_{\bstar}\right\|_{B^{1}_{\infty,1}}^2\,.
\end{equation}
%
%
Finally, we notice that the following equalities hold true:
\[
 \rho\,\div\left(\frac{\alpha}{\rho}\,u_{\bstar}\right)\,=\,-\,\alpha\,\nabla\log\rho\cdot u_{\bstar} \,=\,-\,\alpha\,\div\big(\log\rho\;u_{\bstar}\big)\,.
\]
Using \eqref{algebra:B^0_2} and the paralinearization theorem, we deduce that 
\begin{align}
\label{B2:F}
\left\|\rho\,\div\left(\frac{\alpha}{\rho}\,u_{\bstar}\right)\right\|_{B^0_{\infty,1}}\,\lesssim\,\alpha \, \left\|\rho \right\|_{B^1_{\infty,1}}\,
\left\|u_{\bt_*}\right\|_{B^{0}_{\infty,1}}\,\lesssim \, \alpha \,e^{-\bt_*t}\,\left\|\rho \right\|_{B^1_{\infty,1}}\,\left\|u_0\right\|_{L^2}^\theta\,
\left\|u_{\bstar}\right\|_{B^1_{\infty,1}}^{1-\theta}\,,
\end{align}
where $\theta\in\,]0,1[\,$ is the same exponent appearing above.
Estimates \eqref{L2:F}, \eqref{B1:F} and \eqref{B2:F} together with \eqref{gen:pres:ine} imply the bound
\begin{align} \label{est:Pi_g=0_Bes}
\left\|\nabla\Pi_{\bstar}\right\|_{B^1_{\infty,1}}\,
&\lesssim\,e^{-\bt_*t}\,\left(1\,+\,\|\rho-1\|^\eta_{B^1_{\infty,1}}\right) \\
\nonumber
&\qquad\qquad\times\,
\left(\alpha \left\|u_{0}\right\|_{L^2}\,+\,\left\| u_{\bstar}\right\|_{L^2\cap B^1_{\infty,1}}^2\,+\,\alpha \left\|u_0\right\|_{L^2}^\theta\,
\left\|u_{\bstar}\right\|_{B^1_{\infty,1}}^{1-\theta}\right) \,.
\end{align}

\subsection{End of the argument} \label{ss:end}

We can now plug estimates \eqref{est:dens_*} and \eqref{est:Pi_g=0_Bes} into \eqref{est:u_*_partial}.
After defining
\[
 R_2\,:=\,
 1\,+\,\|\rho_0-1\|^{\eta+1}_{B^1_{\infty,1}}\,,
\]
we get that the following inequalities hold true, for any $t\in[0,T[\,$:
\begin{align*} 
\left\|u_{\bstar}(t)\right\|_{B^1_{\infty,1}}\,&\lesssim\,\left\|u_0\right\|_{B^1_{\infty,1}}\,+\,
\int^t_0e^{-\bt_*\t}\,\left(1\,+\,\|\rho-1\|^{\eta+1}_{B^1_{\infty,1}}\right) \\
\nonumber
&\qquad\qquad\qquad\qquad
\times\,\left( \alpha \left\|u_{0}\right\|_{L^2}\,+\,\left\| u_{\bstar}\right\|_{L^2\cap B^1_{\infty,1}}^2\,+\,\alpha \left\|u_0\right\|_{L^2}^\theta\,
\left\|u_{\bstar}\right\|_{B^1_{\infty,1}}^{1-\theta} \right)\,\dd\t \\
&\lesssim\,\left\|u_0\right\|_{B^1_{\infty,1}}\,+\,R_2
\int^t_0e^{-\bt_*\t}\,\exp\left(C\,(\eta+1)\,\int^\t_0e^{-\bt_*\s}\,\left\|u_{\bstar}\right\|_{B^1_{\infty,1}}\,\dd\s\right) \\
\nonumber
&\qquad\qquad\qquad\qquad
\times\,\left(\alpha \left\|u_{0}\right\|_{L^2}\,+\,\left\| u_{\bstar}\right\|_{L^2\cap B^1_{\infty,1}}^2\,+\,\alpha \left\|u_0\right\|_{L^2}^\theta\,
\left\|u_{\bstar}\right\|_{B^1_{\infty,1}}^{1-\theta} \right)\,\dd\t \\
&\lesssim\,\left\|u_0\right\|_{B^1_{\infty,1}}\,+ \alpha \|u_0\|_{L^2} \,R_2
\int^t_0e^{-\bt_*\t}\,\exp\left(C\,(\eta+1)\,\int^\t_0e^{-\bt_*\s}\,\left\|u_{\bstar}\right\|_{B^1_{\infty,1}}\,\dd\s\right) \\
& \qquad\qquad    + \,R_2 \int^t_0e^{-\bt_*\t}\,\exp\left(C\,(\eta+1)\,\int^\t_0e^{-\bt_*\s}\,\left\|u_{\bstar}\right\|_{B^1_{\infty,1}}\,\dd\s\right) \nonumber \\
\nonumber
&\qquad\qquad\qquad\qquad\qquad\qquad\qquad\qquad\qquad
\times\,\left( \,\left\| u_{\bstar}\right\|_{L^2\cap B^1_{\infty,1}}^2\,+\,\alpha \,
\left\|u_{\bstar}\right\|_{B^1_{\infty,1}} \right)\,\dd\t\,,
\end{align*}
where, for passing from the penultimate bound to the last one, we have used the Young inequality.

At this point, the argument can be closed in a very similar fashion as done in Subsections \ref{ss:g=1_d=2} and \ref{ss:g=1_d-geq3} above.
Let us introduce the notation
\[
U_{\bstar}(t)\,:=\,\left\|u_{\bstar}(t)\right\|_{L^2}\,+\,\left\|u_{\bstar}(t)\right\|_{B^1_{\infty,1}}\,,
\]
with obvious changes when $t=0$.
Then, from the previous estimate for the Besov norm of $u_{\bstar}$ and an application of the Gr\"onwall inequality, for any $t\in[0,T[\,$ we infer
that
\begin{align}
\label{est:U_*_provv}
U_{\bstar}(t)\,&\lesssim\,e^{\alpha\,R_2\,h(t)}\,\Bigg( U_{\bstar}(0)\,\big( 1 \,+ \alpha \,R_2\,h(t)\big)
\\
\nonumber
&\qquad\qquad\qquad\qquad\qquad
+\,R_2\int^t_0 e^{-\bstar\t}\,e^{C\int^\t_0e^{-\bstar\s}\,U_{\bstar}(\s)\,\dd\s}\,\big(U_{\bstar}(\t)\big)^2 \,\dd\t\, \Bigg)\,,
\end{align}
for a new constant $C>0$, where we have defined
\[
 h(t)\,:=\,\int^t_0 e^{-\bstar\t}\,e^{C\int^\t_0e^{-\bstar\s}\,U_{\bstar}(\s)\,\dd\s}\,\dd\t\,.
\]

Now, let us define the time $T_2>0$ as
\begin{align*}
T_2\,:=\,\sup\Bigg\{ \, t\in[0,T[\;\Big|\;  & \, \int^t_0e^{-\bstar\t}\,U_{\bstar}(\t)\,\dd\t\,\leq \, 2 \\ 
&  \quad \text{ and  } \quad \,R_2\int^t_0 e^{-\bstar\t}\,e^{C\int^\t_0e^{-\bstar\s}\,U_{\bstar}(\s)\,\dd\s}\,\big(U_{\bstar}(\t)\big)^2 \,\dd\t\,\leq\,4\,U_{\bstar}(0)\Bigg\}\,.
\end{align*}
%
%
First of all, we notice that, for any $t\in[0,T_2]$, one has
\begin{align*}
\alpha \,R_2\,h(t)
\, \leq\, R_2\, \frac{\alpha}{\bstar}\, e^{2C}\, =\, R_2\, \frac{1+\theta}{\theta}\,\rho^{*}\, e^{2C}\, =\, R_{2}\, \mathfrak{C}\, , 
\end{align*}
where we have used the definition of $ \bstar$, $ \alpha_{*} $ and $ \alpha $. Observe that the new constant $ \mathfrak{C} $
depends only on the fixed quantities $ \rho_{*} $ and $ \rho^*$ appearing in \eqref{eq:vacuum}, but not on $\alpha$, $ \bstar $ and $ u_0 $.
 
By the previous inequality and inequality \eqref{est:U_*_provv}, we gather, for any $t\in[0,T_2]\,$, the following bound:
\begin{equation} \label{est:U_*-bound}
U_{\bstar}(t)\,\lesssim\,\left(5\, +\, R_2\, \mathfrak{C}\right)\,e^{R_2\,\mathfrak{C}} \, U_{\bstar}(0)\,.
\end{equation}
In particular, this implies on the one hand that, for all $ t\in[0,T_2] $, we have   
\begin{align*}
 \int^t_0e^{-\bstar\t}\,U_{\bstar}(\t)\,\dd\t\, \lesssim \, \left(5\, +\, R_2\, \mathfrak{C}\right)\,e^{R_2\, \mathfrak{C}} \, \frac{U_{\bstar}(0)}{\beta_{*}}
\end{align*}
and, on the other hand, that
\begin{align*}
\,R_2\int^t_0 e^{-\bstar\t}\,e^{C\int^\t_0e^{-\bstar\s}\,U_{\bstar}(\s)\,\dd\s}\,\big(U_{\bstar}(\t)\big)^2 \,\dd\t\,\lesssim\,
 \frac{\big(U_{\bstar}(0)\big)^2}{\beta_{*}}\, R_2 \, e^{2\,C}\, \left(5\, +\, R_2\, \mathfrak{C}\right)^2\,e^{2\,R_2\, \mathfrak{C}}\,.
\end{align*}
Therefore, by definition of the time $T_2$, we see that one must have
\[
\mbox{ either }\qquad 2\,\lesssim  \, \frac{U_{\bstar}(0)}{\beta_{*}}\,\left(5 + R_2 \mathfrak{C}\right)\,e^{R_2\,\mathfrak{C}} \qquad \mbox{ or }\qquad
4\,\lesssim \, \frac{U_{\bstar}(0)}{\beta_{*}} \,   R_2\, e^{2C}\,\left(5 + R_2 \mathfrak{C}\right)^2\,e^{2\,R_2\,\mathfrak{C}}\,.
\]
None of those inequalities can be satisfied, if the quantity $ U_{\bstar}(0)/ \beta_{*} $ is small enough, according to the condition imposed in Theorem \ref{th:g=0_d},
where we take $\mf R\,=\,R_2$.
This implies that one must have $T_2=T$. Hence, by \eqref{est:U_*-bound}, we see that $U_{\bstar}$ remains bounded in $[0,T[\,$, which then contradicts
the blow-up property stated in \eqref{est:to-prove_0}.

We thus deduce that $T=+\infty$, as sought.
In addition, arguing as done at the end of Subsection \ref{ss:g=1_d-geq3}, we recover the time decay of the critical Besov norms of the solution:
it follows from \eqref{est:U_*-bound} and \eqref{est:Pi_g=0_Bes} that
\[
\forall\,t\geq0\,,\qquad\qquad
\left\|u(t)\right\|_{L^2\cap B^1_{\infty,1}}\,+\,\left\|\nabla\Pi(t)\right\|_{L^2\cap B^1_{\infty,1}}\,\leq\,C\,e^{-\,\bstar\,t}\,,
\]
for a suitable constant $C>0$.

In order to complete the proof of Theorem \ref{th:g=0_d}, it remains us to show the time decay of the higher order Besov norms of the solution: this will be done
in the next subsection.

\subsection{Decay of higher regularity norms when $\g=0$} \label{ss:decay-higher_0}

Our next goal is to prove the decay of the high regularity norms of the solution, recall estimate \eqref{decay:est:2}.
This will follow from the counterpart of Lemma \ref{per:of:norm} in the case $\g=0$: the precise statement is the following one.

\begin{lemma}
\label{per:of:norm_0}
Let $d\geq2$, $\g=0$ and $\alpha>0$ fixed. Take indices $(s,p,r)\,\in\,\R\times[1,+\infty]\times[1,+\infty]$ such that $p\geq2$ and on of the two conditions in \eqref{cond:Lipschitz}
is verified.
Take an initial datum $\big(\rho_0,u_0\big)$ such that conditions \eqref{eq:in-datum}-\eqref{eq:vacuum} are satisfied, for two suitable constants
$0<\rho_*\leq\rho^*$. Assume that $u_0\in L^2\cap B^s_{p,r}$ and $\nabla\rho_0\in B^{s-1}_{p,r}$ and that, in addition, there exists a unique
global in time solution $\big(\rho,u,\nabla\Pi\big)$ to system \eqref{eq:dd-E} associated to that initial datum,
verifying the regularity properties (i)-(ii)-(iii) of Theorem \ref{th:g=1_d}.
Finally, assume that there exists a constant $C>0$ such that
\begin{equation} \label{ass:decay-low_0}
\forall\,t\geq 0\,,\qquad\qquad \left\|\nabla\rho(t)\right\|_{B^0_{\infty,1}}\,+\,e^{\bstar\,t}\,\left\|u(t)\right\|_{L^2\cap B^1_{\infty,1}}\,+\,
e^{\bstar\,t}\,\left\|\nabla\Pi(t)\right\|_{L^2\cap B^1_{\infty,1}}\,\leq\,C\,,
\end{equation}
where $\beta_*$ is the exponent defined in \eqref{eq:beta-theta}.

Then, for a possibly different constant $\wtilde C>0$ and for a new exponent $0<\beta_0<\beta_*$, defined in relation \eqref{def:bt_0} below,
one has
\[
\forall\,t\geq 0\,,\qquad\qquad
\left\|\nabla\rho(t)\right\|_{B^{s-1}_{p,r}}\,+\,e^{\beta_0\,t}\,\left\|u(t)\right\|_{L^2\cap B^s_{p,r}}\,+\,
e^{\beta_0\,t}\,\left\|\nabla\Pi(t)\right\|_{L^2\cap B^s_{p,r}}\,\leq\,\wtilde{C}\,.
\]
\end{lemma}

\begin{proof}
Notice that we can safely assume that $p<+\infty$, or $p=+\infty$ and $s>1$ here.
As for the proof of Lemma \ref{per:of:norm} above, the strategy is to reproduce fine estimates (in the same spirit of the continuation criterion)
on the Besov norm of the solution, and take care of making lower order Besov norms (for which the decay is guaranteed by assumption) appear.

By going along the lines of the computations of Subsection \ref{ss:est-Besov}, one easily sees that it is natural to define
\begin{equation} \label{def:bt_0}
\beta_0\,:=\,\frac{\theta_0}{1+\theta_0}\,\alpha_*\,, 
\qquad\qquad \mbox{ \tsl{i.e.} }\qquad \beta_0\,=\,\theta_0\,\big(\alpha_*\,-\,\beta_0\big)\,,
\end{equation}
where $\alpha_*\,:=\,\alpha/\rho^*$ as above and
$\theta_0$ is the exponent related to the interpolation
\[
 \left\|f\right\|_{B^{s-1}_{p,r}}\,\lesssim\,\left\|f\right\|^{\theta_0}_{L^2}\,\left\|f\right\|^{1-\theta_0}_{B^{s}_{p,r}}\,.
\]
Observe that, if we fix the index $\s=d/2+\de$ (where $\de>0$ can be taken arbitrarily small) in order to
have the embeddings $B^s_{p,r}\hookrightarrow B^{s-1}_{p,r}\hookrightarrow B^0_{\infty,1}\hookrightarrow B^{-\s}_{\infty,1}$,
we find that
\[
\theta\,=\,\frac{1}{1+\s}\qquad\qquad \mbox{ and }\qquad\qquad \theta_0\,=\,\frac{1}{s+\s}\,,
\]
where $\theta$ is the exponent appearing in \eqref{eq:beta-theta}. Then, as easy computation shows that $\beta_*/\beta_0>1$, namely
\[
0\,<\,\beta_0\,<\,\beta_*\,.
\]

This having been pointed out, our next goal is to perform $B^s_{p,r}$ estimates for $u_{\beta_0}$. The starting point is the analogous of \eqref{est:u_bt-partial}
in the $B^s_{p,r}$ framework, namely
\[ 
\left\|u_{\bt_0}(t)\right\|_{B^s_{p,r}}\,\lesssim\,\left\|u_0\right\|_{B^s_{p,r}}\,+\,\int^t_0\left(\left\|\mc P_{\bt_0}\right\|_{B^s_{p,r}}\,+\,
\left\|\left(2^{js}\,\left\|\mbb C^1_j\right\|_{L^p}\right)_j\right\|_{\ell^r}\,+\,
\left\|\left(2^{js}\,\left\|\mbb C^2_j\right\|_{L^p}\right)_j\right\|_{\ell^r}\right)\,\dd\t\,.
\] 

To begin with, we notice that, similarly to inequalities \eqref{B1:F} and \eqref{B2:F}, one has the estimates
\begin{align*}
\left\| \rho\, \div(e^{-\bt_0t}\,u_{\bt_0}\cdot\nabla u_{\bt_0}) \right\|_{B^{s-1}_{p,r}}\,&\lesssim\, 
e^{-\bt_0t}\,\Big(\|\nabla\rho \|_{B^{s-1}_{p,r}}\,\left\|u_{\bt_0}\right\|_{L^{\infty}}\,\left\|\nabla u_{\bt_0}\right\|_{L^{\infty}} \\
&\qquad\qquad\qquad\qquad\qquad\qquad +\,\rho^*\,\left\| \nabla u_{\bt_0} \right\|_{L^{\infty}}\, \left\| u_{\bt_0} \right\|_{B^{s}_{p,r}} \Big) \\
\left\|\rho\,\div\left(\frac{\alpha}{\rho}\,u_{\bt_0}\right)\right\|_{B^{s-1}_{p,r}}\, &\lesssim \,
\left\|\rho \right\|_{B^{s-1}_{p,r}}\, \left\|u_{\bt_0}\right\|_{L^{\infty}}\, +\,\left\|\nabla\rho\right\|_{L^\infty}\, \left\|u_{\bt_0}\right\|_{B^{s-1}_{p,r}} \\ 
&\lesssim\, \left\|\rho \right\|_{B^{s-1}_{p,r}}\, \left\|u_{\bt_0}\right\|_{L^{\infty}}\, +\, 
\left\|\nabla\rho\right\|_{L^\infty}\,\left\|u_{\bt_0}\right\|_{L^2}^{\theta_0}\,
\left\|u_{\bt_0}\right\|_{B^s_{p,r}}^{1-\theta_0} \\
&\lesssim\, \left\|\rho \right\|_{B^{s-1}_{p,r}}\, \left\|u_{\bt_0}\right\|_{L^{\infty}}\, +\, e^{-(\alpha_*-\bt_0)\theta_0t}\,\left\|\nabla\rho\right\|_{L^\infty}\,
\left\|u_{\bt_0}\right\|_{B^s_{p,r}}^{1-\theta_0}\,.
\end{align*}
Then, using also Lemma \ref{lem:pres:est:Besov}
and the definition of $\bt_0$ in \eqref{def:bt_0}, 
we deduce that
\begin{align*}
\left\| \nabla \Pi_{\beta_0}\right\|_{B^s_{p,r}}\,& \lesssim \,
\left(1 +\| \nabla \rho\|_{L^{\infty}}^{\eta}\right)\,\left( e^{-t \beta_0}\,\left\| u_{\beta_0}\right\|_{L^2}\,\left\| \nabla u_{\beta_0} \right\|_{L^{\infty}}
\,+\,\left\|u_{\bt_0}\right\|_{L^2}\right)\,+\,\left\| \nabla \rho \right\|_{B^{s-1}_{p,r}}\, \left\| \nabla \Pi_{\beta_0}\right\|_{L^{\infty}} \\
&\qquad\qquad +\,e^{-\bt_0t}\,\Big(\|\nabla\rho \|_{B^{s-1}_{p,r}}\,\left\|u_{\bt_0}\right\|_{L^{\infty}}\,\left\|\nabla u_{\bt_0}\right\|_{L^{\infty}}\,+\,
\left\| \nabla u_{\bt_0} \right\|_{L^{\infty}}\, \left\| u_{\bt_0} \right\|_{B^{s}_{p,r}}\Big)
\\
&\qquad\qquad\qquad\qquad  +\, e^{-\bt_0t}\,\left\|\nabla\rho\right\|_{L^\infty}\,
\left\|u_{\bt_0}\right\|_{B^s_{p,r}}^{1-\theta_0} 
\,+\,
\left(1\,+\,\left\|\nabla \rho \right\|_{B^{s-1}_{p,r}}\right)\,\left\|u_{\bt_0}\right\|_{L^{\infty}}\,.
\end{align*}
Using assumption \eqref{ass:decay-low_0} and recalling that $\bt_0<\bt^*$, in turn we find
\begin{align}
\label{0002}
\left\| \nabla \Pi_{\beta_0}\right\|_{B^s_{p,r}}\,&\lesssim \, e^{-t \beta_0}\,
\Big(\left\| u_{\bt_0} \right\|_{B^{s}_{p,r}}\, +\, \left\|  \nabla \rho \right\|_{B^{s-1}_{p,r}}\,+\,1\Big) \\
\nonumber
&\qquad +\,
e^{-(\alpha_*-\beta_0)\theta_0t}\,\left(1\,+\,\left\|u_{\bt_0}\right\|_{B^s_{p,r}}\right)\,+\,e^{-(\beta_*-\beta_0)t}\,
\left(1\,+\,\left\|\nabla \rho \right\|_{B^{s-1}_{p,r}}\right)\,,
\end{align}
where the implicit multiplicative constant depends also on $\|u_0\|_{L^2}$.
Then, making use of assumption \eqref{ass:decay-low_0} again, we infer the bound
\begin{align*}
\left\|\mc P_{\bt_0}\right\|_{B^s_{p,r}}\,&\lesssim\,\left\|\nabla \Pi_{\bt_0}\right\|_{B^s_{p,r}}\,+\,
\|\nabla \rho\|_{B^{s-1}_{p,r}}\,\left\|\nabla\Pi_{\bt_0}\right\|_{L^\infty}\,
\lesssim\,e^{-t \lambda}\,
\Big(\left\| u_{\bt_0} \right\|_{B^{s}_{p,r}}\, +\, \left\|  \nabla \rho \right\|_{B^{s-1}_{p,r}}\,+\,1\Big)\,,
\end{align*}
where we have denoted by $\lambda$ the minimum value among $\bt_0$, $(\alpha_*-\bt_0)\theta_0$ and $\bt_*-\bt_0$.

Let us now bound the commutator terms. On the one hand, an immediate application of Lemma \ref{l:CommBCD} yields
\begin{align*}
\left\|\left(2^{js}\,\left\|\mbb C^1_j\right\|_{L^p}\right)_j\right\|_{\ell^r}\,&\lesssim\,e^{-\bt_0t}\,\left\|\nabla u_{\bt_0}\right\|_{L^\infty}\,
\left\|u_{\bt_0}\right\|_{B^s_{p,r}}\lesssim\,e^{-\bt_0t}\,\left\|u_{\bt_0}\right\|_{B^s_{p,r}}\,\,.
\end{align*}
On the other hand, Lemma \ref{com:damping} implies that
\begin{align*}
\left\|\left(2^{js}\,\left\|\mbb C^2_j\right\|_{L^p}\right)_j\right\|_{\ell^r}\,&\lesssim\,
\| \rho \|_{B^1_{\infty,1}}\,\left\| u_{\beta_0} \right\|_{B^{s-1}_{p,r}}\, +\, \left\|\nabla \rho\right\|_{B^{s-1}_{p,r}}\, \left\|u_{\beta_0}\right\|_{B^1_{\infty,1}}\,.
\end{align*}

Putting all those estimates together, we finally arrive at the following inequality:
\begin{align*}
\left\|u_{\beta_0}(t)\right\|_{B^{s}_{p,r}}\,&\,\lesssim\,
\left\|u_{0}\right\|_{B^{s}_{p,r}}\,+\,\int^t_0e^{-\lambda\t}\left(
\left\| u_{\beta_0}\right\|_{B^{s}_{p,r}}\, +\,\left\|\nabla\rho\right\|_{B^{s-1}_{p,r}}\,+\,1\right) \,\dd\t \\
&\qquad\qquad\qquad\qquad +\,\int^t_0\bigg(\| \rho \|_{B^1_{\infty,1}}\,\left\| u_{\beta_0}\right\|_{B^{s-1}_{p,r}}\,+\,
\left\|\nabla \rho\right\|_{B^{s-1}_{p,r}}\,\left\|u_{\beta_0}\right\|_{B^1_{\infty,1}}\bigg)\,\dt\,. 
\end{align*}
At this point, we use the following two facts,
\begin{align*}
\left\|u_{\beta_0}\right\|_{B^1_{\infty,1}}\,&=\,e^{-t(\bt_*-\beta_0)}\,\left\|u_{\bt_*}\right\|_{B^1_{\infty,1}}\,\lesssim\,e^{-t(\beta_*-\bt_0)} \\
\left\| u_{\beta_0}\right\|_{B^{s-1}_{p,r}}\,&\lesssim\,\left\|u_{\bt_0}\right\|_{L^2}^{\theta_0}\,\left\|u_{\bt_0}\right\|_{B^s_{p,r}}^{1-\theta_0}\,\lesssim\,
e^{-(\bt_*-\bt_0)\theta_0t}\,\left(1\,+\,\left\|u_{\bt_0}\right\|_{B^s_{p,r}}\right)\,,
\end{align*}
which hold true owing to \eqref{ass:decay-low_0} and \eqref{est:u-L^2_0}. Then, up to taking a smaller $\lambda>0$ if necessary, we obtain the bound
\begin{align*}
\left\|u_{\beta_0}(t)\right\|_{B^{s}_{p,r}}\,&\,\lesssim\,
\left\|u_{0}\right\|_{B^{s}_{p,r}}\,+\,\int^t_0e^{-\lambda\t}\left(
\left\| u_{\beta_0}\right\|_{B^{s}_{p,r}}\, +\,\left\|\nabla\rho\right\|_{B^{s-1}_{p,r}}\,+\,1\right) \,\dd\t\,. 
\end{align*}

On the other hand, it follows from inequality \eqref{lin:est:nab:a} and the fact that, under our assumptions, one has $s>1$ always, that
\begin{equation*}
\left\|\nabla \rho\right\|_{B^{s-1}_{p,r}}\,\lesssim\,\left\|\nabla\rho_0\right\|_{B^{s-1}_{p,r}}\,+\,
\int^t_0 e^{-\beta_0 \t}\,\left( \left\|\nabla  u_{\beta_0} \right\|_{L^{\infty}}\,\left\|\nabla \rho\right\|_{B^{s-1}_{p,r}}\, +\,
\left\|\nabla \rho \right\|_{L^{\infty}}\, \left\|u_{\beta_0}\right\|_{B^{s}_{p,r}}\right)\,\dt\,.
\end{equation*}
Therefore, defining
\[
 \mc U_{\bt_0}(t)\,:=\,\left\|u_{\beta_0}(t)\right\|_{B^{s}_{p,r}}\,+\,\left\|\nabla \rho\right\|_{B^{s-1}_{p,r}}
\]
and using \eqref{ass:decay-low_0} again, we easily find
\begin{align*}
 \mc U_{\bt_0}(t)\,\lesssim\,\mc U_{\bt_0}(0)\,+\,\int^t_0e^{-\lambda\t}\,\Big(\mc U_{\bt_0}(\t)\,+\,1\Big)\,\dd\t \,.
\end{align*}
We then conclude, by an application of the Gr\"onwall lemma, that
\begin{equation*}
\forall\,t\geq0\,,\qquad\qquad 
\left\|\nabla \rho(t)\right\|_{B^{s-1}_{p,r}}\, +\, \left\|u_{\bt_0}(t)\right\|_{B^{s}_{p,r}}\,<\,+\,\infty\,.
\end{equation*}
Inserting this bound into \eqref{0002}, also the sought decay for the pressure gradient follows.
The proof of the lemma is thus completed.
\end{proof}


\appendix

\section{Appendix: proof of some technical lemmas} \label{a:app}

In this appendix, we show the proof of some technical lemmas from Section \ref{s:tools}, which have been needed in the course of our proof.

We start by presenting the proof of Lemma \ref{com:damping}.

\begin{proof}[Proof of Lemma \ref{com:damping}]
The beginning of the proof follows the main lines of the proof of classical commutator estimates (see Chapter 2 of \cite{BCD} for details).
More precisely, we use Bony's paraproduct decomposition to write
\begin{align*}
[f,\Delta_j ] v\, &=\, \left[\mathcal{T}_f, \Delta_j \right] v\, +\,\mc T_{\Delta_jv}f\, -\,\Delta_j\mc T_vf\,+\,\mc R\big(f,\Delta_jv\big)\,-\,\Delta_j \mathcal{R}(f, v)\,=\,
\sum_{n=1}^5A_n^{(j)}
\end{align*}
We are going to estimate the terms $ A_n^{(j)} $ separately.

The bound for $A_1^{(j)}\,=\,\left[\mathcal{T}_f, \Delta_j \right] v$ is analogous to the one stated in Lemma \ref{l:ParaComm} (in fact, by spectral localisation,
it would be easy to reduce the proof to that lemma). Indeed, by using Lemma 2.97 of \cite{BCD}, we can write
\begin{align*}
2^{js}\,\left\|A_1^{(j)}\right\|_{L^{p}}\,&\lesssim \, \sum_{|i-j|\leq 5} 2^{js}\, 2^{-j}\, \left\| \nabla S_{i-1} f\right\|_{L^{\infty}}\,
\| \Delta_{i} v \|_{L^{p}}\,\lesssim\,
2^{j(s-1)}\, \| \nabla f \|_{L^{\infty}} \, \sum_{|i-j|\leq 5} \| \Delta_{i} v \|_{L^{p}} \\
&\lesssim \,  c_j^1\, \|\nabla f\|_{L^{\infty}}\, \| v\|_{B^{s-1}_{p,r}}\,,
\end{align*}
for a suitable sequence $\big(c_j^1\big)_j \in \ell^r $ of unitary norm.

We now focus on the second term
\[
 A_2^{(j)}\,=\,\mc T_{\Delta_jv}f\,=\,
 \sum_{i \geq j-3} \Delta_j S_{i-1} v\, \Delta_i f\,. 
\]
As only the high frequencies of $f$ appear in the previous expression, we can make use of the second Bernstein inequality to estimate
\begin{align*}
2^{js}\,\left\| A_2^{(j)}\right\|_{L^{p}}\, &\lesssim\,  2^{js}
\sum_{i\geq j+1}2^{-i}\,2^i\, \| \Delta_i f \|_{L^{\infty}}\,\| \Delta_j S_{i-1} v \|_{L^{p}}	\\
&\lesssim\, 2^{j(s-1)}\, \| \nabla f\|_{B^0_{\infty,1}}\, \|\Delta_{j} v \|_{L^{p}}\,\lesssim\, c_j^2\, \|\nabla f\|_{B^0_{\infty,1}}\,\| v\|_{B^{s-1}_{p,r}},
\end{align*}
with the sequence $ \big(c_j^2\big)_j$ belonging to the unit ball of $\ell^r $.

Next, for the third term $A_3^{(j)}\,=\,-\,\Delta_j\mc T_vf$, we notice that
\[
\left\|\left(2^{js}\,\left\|A_3^{(j)}\right\|_{L^p}\right)_j\right\|_{\ell^r}\,\approx\,\left\|T_vf\right\|_{B^s_{p,r}}\,.
\]
Hence, we can use the continuity properties of the paraproduct operator, see Proposition \ref{p:op}, together with Remark 2.83 oc \cite{BCD}, to bound
\begin{align*}
2^{js}\,\left\|A_3^{(j)}\right\|_{L^{p}}\,\lesssim\,c_j^3\, \|\nabla f\|_{B^{s-1}_{p,r}}\,\|v\|_{L^{\infty}}\,,
\end{align*}
with  $\big(c_j^3\big)_j \in \ell^r $ of unitary norm.

As for $A_4^{(j)}$, we use spectral localisation properties to rewrite 
\[
 A_4^{(j)}\,=\,\sum_{|l-j| \leq 1 } \sum_{|l-k|\leq 1} \Delta_k f\, \Delta_j \Delta_l v\, =\,  \sum_{|k-j| \leq 2 } \sum_{|l-k|\leq 1} \Delta_k f\, \Delta_j \Delta_l v\,.
\]
Then, we can bound
\begin{align*}
2^{js}\,\left\| A_4^{(j)}\right\|_{L^{p}}\, &\lesssim \, 2^{js}\,\sum_{|k-j| \leq 2 }\sum_{|l-k|\leq1}\left\|\Delta_k f\,\Delta_j\Delta_lv\right\|_{L^{p}}\,\lesssim\,
c_j^4\, \left(\|\nabla f\|_{B^{s-1}_{p,r}}\,\|v\|_{L^{\infty}}\,+\,\|f\|_{L^{\infty}}\,\|v\|_{L^{\infty}}\right)\,,
\end{align*}
for a suitable sequence  $\big(c_j^4\big)_j \in \ell^r $ of unitary norm. Notice that the presence of the term $\|f\|_{L^{\infty}}\,\|v\|_{L^{\infty}}$
is needed in order to control the terms for which $k=l=0$, so that no high frequencies are involved
(nor of $f$, nor of $v$, so one cannot make a gradient appear in the estimates).

Finally, we consider the last term $A_5^{(j)}\,=\,-\Delta_j\mc R(f,v)$. For controlling it, we can directly apply Proposition \ref{p:op}
and get
\begin{align*}
2^{js}\,\left\|A_5^{(j)}\right\|_{L^{p}}\,\lesssim\,c_j^5\,\left\|\mc R(f,v)\right\|_{B^s_{p,r}}\,\lesssim\,
\| f\|_{B^{1}_{\infty,\infty}} \| v\|_{B^{s-1}_{p,r}}\,,
\end{align*}
where, again, the sequence $\big(c^5_j\big)_j$ belongs to $\ell^r$ and has unitary norm.

In the end, after adding all the estimates for the terms $A_n^{(j)}$, we deduce the proof of the lemma by setting $ c_j\, :=\,\sum_{n=1}^5 c_j^n\,\in \,\ell^r $.  
\end{proof}

Now, we present the proof of Lemma \ref{lem:pres:est:Besov}, concerning fine estimates for the pressure gradient in Besov spaces.
The guidelines of the proof being similar to \cite{D:F, F_2012}, we limit ourselves to give a sketch of the argument here, indicating only the main modifications
in order to get the estimates used in our work.

\begin{proof}[Proof of Lemma \ref{lem:pres:est:Besov}]
We are going to argue in a similar way as in Section 5 of \cite{D:F}, so we will be a bit sketchy here and refer to that paper for details.
In addition, for simplicity we will consider here only the case $ (s,p,r) = (1, \infty, 1) $, beacause is the one that we will use;
besides, this case is the most involved one, as we will need to estimates non-linear term in the space $B^0_{\infty,1}$ where, as already remarked several times, the product is not continuous.

First of all notice that we can rewrite equation \eqref{eq:elliptic} as follows:
\[ 
-\,\Delta \Pi \,=\, - \,\nabla \log (\rho) \cdot \nabla \Pi \,+\,   \rho\,  \div F\,.
\] 
Hence,  by cutting $ \nabla \Pi $ into low and high frequency and using that $ p \geq 2$, we have
\begin{align*}
\left\|\nabla \Pi\right\|_{B^1_{\infty,1}}\,&\lesssim\,\|\nabla \Pi\|_{L^2}\,+\,\left\|\Delta \Pi\right\|_{B^0_{\infty,1}} \\
&\lesssim\,\|\nabla\Pi\|_{L^2}
\,+\,\left\|\nabla\log\rho\cdot\nabla \Pi\right\|_{B^0_{\infty,1}}
\,+\,\left\|\rho\,\div F \right\|_{B^0_{\infty,1}}\,.
\end{align*}
Using inequality \eqref{algebra:B^0} and Proposition \ref{p:comp}, we get 
\begin{equation*}
\left\|\nabla\log\rho\cdot\nabla \Pi\right\|_{B^0_{\infty,1}}\,\lesssim\,\left\|\nabla(\rho-1)\right\|_{B^0_{\infty,1}}\,\left\|\nabla \Pi\right\|_{B^{1/2}_{\infty,1}}\,.
\end{equation*}
However, this estimate makes a non-linear dependence appear on the Besov norm of $\rho-1$ and this is not suitable for our scopes.
Therefore, we rather use a Bony's paraproduct decomposition, together with Proposition \ref{p:comp} again, and bound
\begin{align*}
\left\|\nabla\log\rho\cdot\nabla \Pi\right\|_{B^0_{\infty,1}}\,&\lesssim\,\left\|\nabla(\rho-1)\right\|_{B^0_{\infty,1}}\,\left\|\nabla \Pi\right\|_{L^{\infty}}\,+\,
\left\|\nabla(\rho-1)\right\|_{L^{\infty}}\,\left\|\nabla \Pi\right\|_{B^{1/2}_{\infty,1}} \\
&\lesssim\,\left\|\nabla(\rho-1)\right\|_{B^0_{\infty,1}}\,\left\|\nabla \Pi\right\|_{L^{\infty}}\,+\,
\left\|\nabla(\rho-1)\right\|_{L^{\infty}}\,\left\|\nabla \Pi \right\|^{\de}_{L^2}\,\left\|\nabla \Pi\right\|^{1-\de}_{B^{1}_{\infty,1}}\,,
\end{align*} 
for a suitable $\de\in\,]0,1[\,$ coming from interpolation. After an application of the Young inequality, that estimate in turn yields the bound
\begin{align*}
\left\|\nabla \Pi\right\|_{B^1_{\infty,1}}\,&\lesssim\,\left(1\,+\,\|\nabla\rho\|_{L^\infty}^\eta\right)\,\|\nabla\Pi\|_{L^2}\,+\,
\left\|\nabla(\rho-1)\right\|_{B^0_{\infty,1}}\,\left\|\nabla \Pi\right\|_{L^{\infty}}\,+\,
 \left\| \rho\,  \div(F)  \right\|_{B^{1}_{\infty,1}}\,, 
\end{align*}
where we have set $\eta\,=\,1+1/\de$. Finally, Lemma \ref{l:laxmilgram} implies the sought inequality \eqref{gen:pres:ine}. 
\end{proof}


\addcontentsline{toc}{section}{References}
{\small

}

\end{document}